\newcommand{\comment}[1]{}
\DeclareMathSymbol{\mlq}{\mathord}{operators}{``}
\DeclareMathSymbol{\mrq}{\mathord}{operators}{`'}
\DeclareMathOperator{\gal}{Gal}
\newtheorem {thm}{Theorem}[section]
\newtheorem{corollary}[thm]{Corollary}
\newtheorem{fact}[thm]{Fact}
\newtheorem{prop}[thm]{Proposition}
\newtheorem {lem}[thm]{Lemma}
\newtheorem*{lemstar}{Lemma}
\newtheorem{ques}[thm]{Question}
\newtheorem{claim}{Claim}
\newtheorem*{claimstar}{Claim}
\theoremstyle{remark}
\newtheorem{rem}[thm]{Remark}
\newtheorem{np*}{Non-Proof}
\theoremstyle{definition}
\newtheorem{defn}[thm]{Definition}
\newtheorem{exam}[thm]{Example}
\newtheorem{case}{Case}
\numberwithin{subcase}{case}
\newcommand{\mc}{\mathcal }
\begin{document}

\setlist[enumerate]{noitemsep, topsep=0pt}

\title{On computable field embeddings and difference closed fields}

\author[M. Harrison-Trainor]{Matthew Harrison-Trainor}
\address{Group in Logic and the Methodology of Science\\
University of California, Berkeley\\
 USA}
\email{matthew.h-t@berkeley.edu}
\urladdr{\href{http://www.math.berkeley.edu/~mattht/index.html}{www.math.berkeley.edu/$\sim$mattht}}

\author[A. Melnikov]{Alexander Melnikov}
\address{The Institute of Natural and Mathematical Sciences\\
Massey University\\
 New Zealand}
\email{alexander.g.melnikov@gmail.com}
\urladdr{\href{https://dl.dropboxusercontent.com/u/4752353/Homepage/index.html}{https://dl.dropboxusercontent.com/u/4752353/Homepage/index.html}}

\author[R. Miller]{Russell Miller}
\address{Dept.\ of Mathematics, Queens College, \& Ph.D.\ Programs in Mathematics \& Computer Science,
Graduate Center, City University of New York,  USA}
\email{Russell.Miller@qc.cuny.edu}
\urladdr{\href{http://qcpages.qc.cuny.edu/~rmiller/}{http://qcpages.qc.cuny.edu/$\sim$rmiller/}}

\thanks{The first author was partially supported by the Berkeley Fellowship and NSERC grant PGSD3-454386-2014. The second author was partially supported by the Packard Foundation. The third author was supported by NSF grants \# DMS-1362206 and DMS-1001306, and by several PSC-CUNY research awards. Some of this work took place at a workshop held by the Institute for Mathematical Sciences of the National University of Singapore.}

\begin{abstract}
We investigate when a computable automorphism of a computable field can be effectively extended to a computable automorphism of its (computable) algebraic closure. We then apply our results and techniques to study effective embeddings of computable difference fields into computable difference closed fields.
\end{abstract}

\subjclass[2010]{03D45, 03C57, 12Y05}

\maketitle


\section{Introduction}
\label{Introduction}

This article is a contribution to effective field theory, where the main objects of study are computable fields. Recall that an algebraic structure is computable if the elements of its domain are associated with natural numbers in such a way that the operations become computable functions upon this domain \cite{Malcev61,Rabin60}.
There are a number of classical results which say that maps between fields can be extended to maps between their algebraic closures. We consider when this can be done effectively.  That is, if all of the fields involved are computable, and we are given a computable map, must there exist a computable extension to the algebraic closures? 
We obtain both necessary and sufficient conditions on a computable field $\mc{F}$ which ensure that these classical theorems hold effectively for the field $\mc{F}$. 
We also apply our results to computable fields with a distinguished (computable) automorphism; such fields are known as difference fields. We investigate the problem of effectively embedding difference fields into computable difference-closed fields (these are existentially closed difference fields, to be discussed). As we will see,  the most naive analogy of the well-known results of Rabin~\cite{Rabin60} and Harrington~\cite{Harrington74} fails for computable difference fields, in all characteristics. Nonetheless,  we will find a broad class of fields (including abelian extensions of a prime field) for which a stronger version of the analogous result holds. 

\subsection{Embeddings into algebraically closed fields} In the pioneering paper  \cite{Rabin60},
Rabin proved that every computable field $\mc{F}$ can be embedded into a computable presentation $\mc{E}$ of its algebraic closure by a computable map $\imath \colon \mc{F} \to \mc{E}$. Provided that $\mc{E}$ is algebraic over the image $\imath(\mc{F})$, we call such an embedding $\imath$ a \textit{Rabin embedding} of $\mc{F}$ into $\mc{E}$, writing $\overline{\mc{F}}$ for $\mc{E}$ since $\mc{E}$ may thus be regarded as an algebraic closure of $\mc{F}$. In what follows it will be important that, in general,  the image of $\mc{F}$ under the Rabin embedding $\imath$ does not have to be a computable subset of $\overline{\mc{F}}$.
  Rabin~\cite{Rabin60} showed that the problem of  deciding the $\imath$-image of $\mc{F}$ in $\overline{\mc{F}}$ is fully captured by the notion of the \textit{splitting set}. Recall that the \textit{splitting set} $S_\mc{F}$ of $\mc{F}$ is the set of all polynomials $p \in \mc{F}[X]$ which are reducible over $\mc{F}$. If the splitting set of $\mc{F}$ is computable, then we say that $\mc{F}$ has a \textit{splitting algorithm}.  Rabin~\cite{Rabin60} showed that for each computable field $\mc{F}$, and for each Rabin embedding $\imath$ of $\mc{F}$, the image $\imath(\mc{F})$ of $\mc{F}$ in $\overline{\mc{F}}$ is Turing equivalent to  the splitting set of $\mc{F}$, which may be undecidable~\cite{Rabin60}. We note that splitting algorithms had been studied long before Rabin. For instance, in 1882, Kronecker \cite{Kronecker82}  analyzed splitting algorithms for finitely generated extensions of $\mathbb{Q}$.

\subsection{The first main result} It is well known that every isomorphic embedding $\alpha$ of a field $\mc{F}$ into an algebraically closed $\mc{K}$ extends to an embedding $\beta$ of the algebraic closure of $\mc{F}$ into $\mc{K}$. Since we are interested in \emph{effective} embeddings, we ask whether $\beta$ can always be chosen to be \emph{effective}. In our notation, with a fixed Rabin embedding $\imath$ and an arbitrary computable $\alpha$, we ask for a computable $\beta$ such that the following diagram commutes:
$$\xymatrix{
\overline{\mc{F}}\ar@{-->}[r]^{\beta} & \mc{K}\\
\mc{F}\ar[u]^{\imath}\ar[ur]_\alpha & 
}$$
i.e., $\alpha = \beta \circ \imath$.
If a computable  solution to the diagram above exists for every choice of $\alpha$ and of the computable algebraically closed field $\mc{K}$, then we say that $(\mc{F},\imath)$ has the \emph{computable extendability of embeddings property}.
Notice, however, that if some Rabin embedding $\imath$ of a particular $\mc{F}$
has the computable extendability of embeddings property, then so does every other Rabin embedding $\jmath$ of $\mc{F}$ (into any
computable presentation of $\overline{\mc{F}}$):  just apply the computable extendability of embeddings property for $\imath$,
with $\jmath$ as the $\alpha$, to get an embedding $\beta_{\jmath}$ which extends
$\jmath\circ\imath^{-1}$ (and must be an isomorphism).
Then, given any other $\alpha$, the computable extendability of embeddings property for $\imath$ yields a $\beta$ such that
$\beta\circ\beta_{\jmath}^{-1}$ satisfies the computable extendability of embeddings property for $\jmath$ and this $\alpha$.
Therefore, we usually simply say that $\mc{F}$ itself has the computable extendability of embeddings property.

 The first problem that we address in the paper is:

\medskip

\begin{adjustwidth}{2.5em}{0pt}
\emph{Find a necessary and sufficient condition for a computable $\mc{F}$ to have the computable extendability of embeddings property.}
\end{adjustwidth}

\smallskip

Before we give a necessary and sufficient condition, we discuss a subtlety that would not occur in the classical case. The desired extension $\beta$ clearly depends on the choice of the Rabin embedding $\imath$. Classically, the dependence on $\imath$ is often suppressed, since we can identify $\mc{F}$ with its $\imath$-image. However, as noted above, such an identification is generally impossible \emph{effectively}: the membership problem for $\imath(\mc{F})$ may be undecidable. 
 To emphasize the dependence on the embedding $\imath: \mc{F} \to \overline{\mc{F}}$, we say that $\beta$ \textit{$\imath$-extends} $\alpha$ if it is a solution to the diagram above. Later in the paper we will allow $\imath$ to vary, but for now we fix a concrete choice of a Rabin embedding $\imath$.

We may further restrict ourselves and ask for a \emph{uniform procedure} (i.e., a Turing functional) that takes the open diagram of an algebraically closed field $\mc{K}$ and an embedding $\alpha \colon \mc{F} \to \mc{K}$ and  outputs an embedding  of $\overline{\mc{F}}$ into $\mc{K}$ $\imath$-extending $\alpha$. 
For uniform extendability we do not require $\mc{K}$ or $\alpha$ to be computable, but we still fix $\imath$. The reader may find it somewhat unexpected that this uniform version is equivalent to the computable extendability of embeddings property:

\begin{thm}\label{thm:1}
Let $\mc{F}$ be a computable field together with a computable embedding $\imath\colon \mc{F} \to \overline{\mc{F}}$ of $\mc{F}$ into its algebraic closure. Then the following are equivalent:
\begin{enumerate}
	\item $\mc{F}$ has a splitting algorithm,
	\item $\mc{F}$ has the computable extendability of embeddings property,
	\item There exists a Turing functional which, given as its oracle the open diagram of an algebraically closed field $\mc{K}$ and an embedding $\alpha \colon \mc{F} \to \mc{K}$, computes an embedding  of $\overline{\mc{F}}$ into $\mc{K}$ $\imath$-extending $\alpha$.
\end{enumerate}
\end{thm}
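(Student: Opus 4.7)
The plan is to prove $(1) \Rightarrow (3) \Rightarrow (2) \Rightarrow (1)$. The implication $(3) \Rightarrow (2)$ is immediate, since feeding the uniform Turing functional a computable oracle (coding a computable $\mc{K}$ and $\alpha$) outputs a computable $\beta$. The two substantive directions are $(1) \Rightarrow (3)$ and $(2) \Rightarrow (1)$.

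For $(1) \Rightarrow (3)$, I build $\beta$ by stages, uniformly in the oracle coding $(\mc{K},\alpha)$. Enumerate $\overline{\mc{F}}$ as $\{a_0, a_1, \ldots\}$. At stage $n$, having defined $\beta$ on the finitely generated subfield $L_n = \imath(\mc{F})(a_0, \ldots, a_{n-1})$ of $\overline{\mc{F}}$ (with $\beta(\imath(f)) = \alpha(f)$ for $f \in \mc{F}$), I extend to $L_n(a_n)$ as follows. A splitting algorithm on $\mc{F}$ lifts to one on each $L_n$ by Kronecker's constructive handling of finitely generated algebraic extensions; using it, compute the minimal polynomial $m_n(X) \in L_n[X]$ of $a_n$ over $L_n$. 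Transport $m_n$ via the already-defined $\beta$ to a polynomial over $\mc{K}$; the oracle for $\mc{K}$ lets me enumerate its domain to find a root, which must exist since $\mc{K}$ is algebraically closed. Setting $\beta(a_n)$ to any such root yields a consistent extension of the embedding. The whole procedure is uniform in the oracle, establishing (3).

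For $(2) \Rightarrow (1)$ I argue the contrapositive: assuming $\mc{F}$ has no splitting algorithm, I construct computable $\mc{K}$ and a computable embedding $\alpha\colon \mc{F}\to\mc{K}$ such that no partial computable functional $\Phi_e$ $\imath$-extends $\alpha$ to a valid $\beta$. The construction is a finite-injury diagonalization against requirements $R_e$: ``$\Phi_e$ is not a valid $\beta$.'' For each $R_e$, I reserve a witness $a_e \in \overline{\mc{F}}$ together with some polynomial $p_e \in \mc{F}[X]$ with $\imath(p_e)(a_e) = 0$ whose irreducibility status has not yet appeared in the c.e.\ enumeration of $S_\mc{F}$. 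I keep the values of $\alpha$ on the coefficients of $p_e$ provisional and the diagram of $\mc{K}$ near candidate images of $a_e$ unresolved. When $\Phi_e(a_e)$ converges to some $k \in \mc{K}$, I finalize those coefficients of $\alpha$ and the diagram of $\mc{K}$ near $k$ so that $\alpha(p_e)(k) \neq 0$, forcing $k \neq \beta(a_e)$ for any valid $\beta$; at the same time I supply a fresh element of $\mc{K}$ as a genuine root of $\alpha(p_e)$ so that $\mc{K}$ still closes up to an ACF.

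The main obstacle is global coherence: $\mc{K}$ must grow into a bona fide computable algebraically closed field, $\alpha$ must remain a well-defined injective field homomorphism, and the commitments made for distinct requirements must not collide. The non-computability of $S_\mc{F}$ is precisely what provides the needed slack: it guarantees an infinite supply of polynomials whose irreducibility is undecided at each stage, allowing witnesses $a_e$ with disjoint ``provisional'' coefficient supports and preventing conflicts between diagonalization steps. Assembling this into a priority argument that satisfies every $R_e$ is the delicate point, but all the ingredients are available from the hypothesis $\neg(1)$.
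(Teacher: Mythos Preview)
Your cycle $(1)\Rightarrow(3)\Rightarrow(2)\Rightarrow(1)$ matches the paper's, but both substantive directions have issues.

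For $(1)\Rightarrow(3)$, your argument has a genuine gap in positive characteristic. You claim the splitting algorithm on $\mc{F}$ lifts to each $L_n=\imath(\mc{F})(a_0,\ldots,a_{n-1})$ ``by Kronecker's constructive handling of finitely generated algebraic extensions,'' but Kronecker's theorem requires the adjoined element to be \emph{separable}. In characteristic $p>0$ the $a_i\in\overline{\mc{F}}$ may be inseparable over $\imath(\mc{F})$, and then one does not obtain a splitting algorithm for $L_n$ in this naive way; the paper flags exactly this as the new obstacle. The paper's fix is a two-step extension: first use the splitting algorithm to compute the separable closure $\mc{F}^s\subseteq\overline{\mc{F}}$ and extend $\alpha$ to $\mc{F}^s$ by your Kronecker-style argument (now legitimate, since every adjoined element is separable); then pass from $\mc{F}^s$ to $\overline{\mc{F}}$ using that each remaining element has minimal polynomial $X^{p^n}-r$ with a \emph{unique} root in $\mc{K}$, so the extension is forced and computable. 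Without this decomposition your $(1)\Rightarrow(3)$ is incomplete.

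For $(2)\Rightarrow(1)$, your contrapositive is in the right spirit, but the specific diagonalization mechanism you describe does not work as stated, and the paper's mechanism is different. You propose to keep ``the values of $\alpha$ on the coefficients of $p_e$ provisional'' and then force $\alpha(p_e)(k)\neq 0$ when $\Phi_e(a_e)=k$. But $\alpha$ is a ring homomorphism: once it is defined on any generating set, its values on the coefficients of $p_e$ are determined, and for algebraic $\mc{F}$ there is no ``disjoint coefficient support'' to reserve. Moreover, nothing stops a candidate $\Phi_e$ from waiting until a root of $\alpha(p_e)$ appears in the diagram of $\mc{K}$ (which must happen, $\mc{K}$ being algebraically closed) and outputting that root, so your polynomial-value target can be forced the wrong way. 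The paper instead builds $\mc{K}$ and $\alpha$ simultaneously by a decidable Henkin construction over the complete theory $ACF+\mathrm{Diag}(\mc{F})$, and diagonalizes directly on elements of $\mc{F}$: when $\varphi_e(\imath(a_i))$ converges to a Henkin constant $c_j$, it checks whether the current partial diagram already forces $\underline{a}_i=c_j$, and if not adds $\underline{a}_i\neq c_j$, so that $\alpha(a_i)\neq\varphi_e(\imath(a_i))$. The argument is then run in the \emph{positive} direction: assuming $(2)$, some $\varphi_e$ is a genuine $\imath$-extension and can never be diagonalized against, and the claim shows that this persistent failure to diagonalize yields a decision procedure for $\imath(\mc{F})$, hence for the splitting set.
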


The property captured by Theorem~\ref{thm:1} above is also equivalent to an \emph{a priori} weaker uniform extendability condition, namely the existence of a uniform procedure that takes indices of computable $\mc{K}$ and $\alpha: \mc{F} \rightarrow \mc{K}$ and outputs an index of a computable $\beta: \overline{\mc{F}} \rightarrow \mc{K}$ extending $\alpha$. Indeed, this weaker uniform property \emph{follows} from the uniform extendability condition in Theorem~\ref{thm:1} and \emph{implies} the computable extendability property.

In the language of reverse mathematics, Theorem~\ref{thm:1} would say that in the $\omega$-model $REC$ consisting of the computable sets, a field has a unique algebraic closure if and only if that field has a splitting algorithm. Thus, while $RCA_0$ proves that every field with a splitting algorithm has a unique algebraic closure, it is consistent that every other field has more than one algebraic closure.
We note that it was already known from work in reverse mathematics (and is easy to see) that in the situation described above there is always a \textit{low} $\imath$-extension of $\alpha$, and in characteristic zero if $\mc{F}$ has a splitting algorithm then there is a computable extension of $\alpha$ (see \cite[Theorem 9]{DoraisHirstShafer13} and \cite[Theorem 3.3]{FriedmanSimpsonSmith83}). 
In our result we do not restrict ourselves to fields of characteristic $0$; the issue that we face in the case of a positive characteristic will be circumvented using purely inseparable extensions (to be defined).
We remark that the essential part of  our proof of Theorem~\ref{thm:1} is based  on a certain preservation strategy combined with a variation of the Henkin construction; such a combination has not yet been seen in effective algebra.

\subsection{The second main result}

Another classical result says that every automorphism of a field $\mc{F}$ extends
to an automorphism of its algebraic closure. 
In our notation, the diagram
$$\xymatrix{
\mc{F}\ar[r]^{\imath}\ar[d]_\alpha & \overline{\mc{F}}\ar@{-->}[d]^\beta\\
\mc{F}\ar[r]^{\imath} & \overline{\mc{F}}
}$$
always has a solution $\beta$ such that the diagram commutes, i.e., $\imath \circ \alpha = \beta \circ \imath$. Once again this is dependent on the embedding $\imath: \mc{F} \to \overline{\mc{F}}$, and slightly abusing our terminology we say that $\beta$ \textit{$\imath$-extends} $\alpha$. We ask when $\beta$ can be computed effectively. In the setting of automorphisms, it is natural to look at normal algebraic extensions of the prime field (as we will see in Proposition \ref{fact:rigid}). In this case, we  can apply Theorem~\ref{thm:1} to fully characterize existence of such $\imath$-extensions in terms of a splitting algorithm; 
the exact statement will be given in \S 4  (Corollary~\ref{thm:auto}). 
Although the reader may find Corollary~\ref{thm:auto} interesting on its own right, the discussed above dependence on $\imath$ makes it somewhat unsatisfying. Also, as we will discuss in the next subsection, we would like to apply our results to difference fields, and there this dependence on $\imath$ is an obstacle. Therefore, in contrast to the situation of the computable extendability of embeddings property above, we would like to allow the embedding $\imath$ to vary. 
\begin{defn}\label{def:3}
We say that a computable field $\mc{F}$ has the \emph{computable extendability of automorphisms property} if for every computable automorphism $\alpha \colon \mc{F} \rightarrow \mc{F}$ there is a Rabin embedding $\imath \colon \mc{F} \to \overline{\mc{F}}$ and a computable automorphism $\beta \colon \overline{\mc{F}} \rightarrow \overline{\mc{F}}$ which $\imath$-extends $\alpha$.
\end{defn}

\noindent The second problem we address in the paper is:

\medskip

\begin{adjustwidth}{2.5em}{0pt}
\emph{Find a necessary and sufficient condition for a computable $\mc{F}$ to have the computable extendability of automorphisms property.}
\end{adjustwidth}

\smallskip

As we mentioned above, the computable extendability of automorphisms property is the property which is of interest in constructing embeddings of difference fields into difference closed fields (as we will see in Theorem~\ref{main:theo3}). It is not hard to see that if a normal extension $\mc{F}$ of the prime field has a splitting algorithm, then $\mc{F}$ has the computable extendability of automorphisms property. Is having a splitting algorithm implied by computable extendability of automorphisms property? Although we don't know if this is true in general (and we conjecture that perhaps not),  we give a condition on the Galois group of $\mc{F}$ over the prime field---the \textit{non-covering property}---under which the computable extendability of automorphisms property is equivalent to having a splitting algorithm.
 \begin{defn}
\label{defn:NCP}
 We say that a group $G$ has the \textit{non-covering property} if for all finite index normal subgroups $M \subsetneq N$ of $G$ and $g \in G$, there is $h \in g N$ such that for all $x \in G$, $x^{-1} h x \notin  g M$.
 \end{defn} 
In Lemma~\ref{forking-equivalence} we will give an equivalent condition in the language of field extensions, using Galois correspondence.  

 Before we state our second main result, we note that groups with the non-covering property include abelian and simple groups, and the class of profinite groups with the non-covering property is closed under direct products.

\begin{thm}\label{maintheo2}
Let $\mc{F}$ be a computable normal extension of $\mathbb{F}_p$, for some prime $p$, such that $\gal(\mc{F}/\mathbb{F}_p)$ has the non-covering property. The following are equivalent:
\begin{enumerate}
	\item $\mc{F}$ has a splitting algorithm,
	\item $\mc{F}$ has the computable extendability of automorphisms property,
	\item $\mc{F}$ has the uniform extendability of automorphisms property.
\end{enumerate}
\end{thm}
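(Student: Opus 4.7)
The plan is to prove the cycle (1)$\Rightarrow$(3)$\Rightarrow$(2)$\Rightarrow$(1); the middle implication is immediate. For (1)$\Rightarrow$(3), fix any Rabin embedding $\imath$ and apply the Turing functional supplied by Theorem~\ref{thm:1}(3) to the oracle $(\overline{\mc{F}},\imath\circ\alpha)$, obtaining a computable embedding $\beta\colon\overline{\mc{F}}\to\overline{\mc{F}}$ with $\beta\circ\imath=\imath\circ\alpha$. Algebraicity of $\imath$ forces $\beta$ to be surjective and hence an automorphism, uniformly in an index for $\alpha$.

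The main work is (2)$\Rightarrow$(1), which we approach by contrapositive: assuming $\mc{F}$ has no splitting algorithm, we build a computable $\alpha\in\gal(\mc{F}/\mathbb{F}_p)$ admitting no computable extension through any computable Rabin embedding. Writing $\mc{F}=\bigcup_n L_n$ as a computable chain of finite Galois extensions of $\mathbb{F}_p$ (available by normality), specifying $\alpha$ amounts to choosing a coherent sequence of cosets $g_n\in G/N_n$, where $G=\gal(\mc{F}/\mathbb{F}_p)$ and $N_n=\gal(\mc{F}/L_n)$. We build the sequence in stages, diagonalizing against each pair $(\imath_e,\varphi_j)$ of partial computable functions via a requirement $R_{e,j}$ that prevents $\imath_e$ and $\varphi_j$ from jointly forming a valid Rabin embedding together with an $\imath_e$-extension of $\alpha$.

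The strategy for meeting $R_{e,j}$ is the central use of the non-covering property. At a stage when $\imath_e$ has been evaluated on a larger finite Galois extension $K\supsetneq L_n$ of $\mathbb{F}_p$ and $\varphi_j$ has committed on $\imath_e(K)$, we extract from these data the candidate $\alpha_K^*\in\gal(K/\mathbb{F}_p)$ that $\alpha|_K$ would have to equal for $\varphi_j$ to $\imath_e$-extend $\alpha$; this corresponds to a coset $g_K M\subseteq g_n N_n$ with $M=\gal(\mc{F}/K)$. Apply the non-covering property to $N=N_n$, $M$, and $g=g_K$ to produce $h\in g_K N_n$ with $x^{-1}hx\notin g_K M$ for every $x\in G$, and commit $\alpha|_K=h|_K$. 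The universal quantifier over $x$ is exactly what is required because the adversary retains the freedom to replace $\imath_e$ by $\sigma\circ\imath_e$ for any automorphism $\sigma$ of $\overline{\mc{F}}$, and a short Galois-theoretic calculation shows that this conjugates the value of $\alpha_K^*$ by $\sigma|_{\imath_e(K)}\in\gal(\imath_e(K)/\mathbb{F}_p)$; the non-covering condition simultaneously rules out every such re-choice. A finite-injury priority argument then handles interaction of requirements, which is benign because each requirement may be satisfied at an arbitrarily large finite Galois stage and lower-priority commitments never permanently injure higher-priority ones.

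The main obstacle is the Galois-theoretic translation of the non-covering property and its link to the Rabin-embedding freedom; this is precisely the bridge supplied by Lemma~\ref{forking-equivalence}. A secondary point is that normality of $\mc{F}$ over $\mathbb{F}_p$ is essential both for the surjectivity of the restriction maps $G\twoheadrightarrow\gal(L_n/\mathbb{F}_p)$ and for computably enumerating the chain $\{L_n\}$, while the failure of a splitting algorithm enters implicitly via Rabin's theorem, which is what ensures the diagonal automorphism thus constructed could not have been obtained by the splitting-based procedure of the (1)$\Rightarrow$(3) direction.
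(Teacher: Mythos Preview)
Your (1)$\Rightarrow$(3)$\Rightarrow$(2) direction is correct and matches the paper's use of Theorem~\ref{thm:1} and Corollary~\ref{thm:auto}. The issue lies in your (2)$\Rightarrow$(1) argument.

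First a structural remark: the paper proves (2)$\Rightarrow$(1) \emph{directly}, not by contrapositive. It assumes (2), runs exactly the sort of diagonalization you describe, and then uses the hypothesis to conclude that the diagonalization must \emph{fail} against some genuine computable automorphism $\varphi_e$ of $\overline{\mc{F}}$. The splitting algorithm is then read off from the convergence behaviour of this particular $\varphi_e$: once higher-priority requirements have stabilized, an element $x\in\overline{\mc{F}}$ lies in the image of $\mc{F}$ if and only if it already lies in the image of the finite stage $L_t$, where $t$ is any stage by which $\varphi_e$ has converged on all the $\mathbb{F}_p$-conjugates of $x$. That timing argument is the entire content of the implication.

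Your contrapositive version would need the logically equivalent statement: assuming there is no splitting algorithm, every requirement $R_{e,j}$ is eventually met. But your proposal never establishes this. You write that a finite-injury argument ``handles interaction'' and that the failure of a splitting algorithm ``enters implicitly via Rabin's theorem,'' but neither claim does the work. There is no injury here at all (a requirement, once satisfied, stays satisfied), and Rabin's theorem by itself says nothing about why a slowly converging total $\varphi_j$ cannot perpetually outrun the growing chain $L_n$ on which $\alpha$ is being defined. The only way to rule this out is precisely the observation above: if $\varphi_j$ is total and \emph{never} diagonalized against, then its convergence times give a decision procedure for $\imath(\mc{F})$, contradicting $\neg(1)$. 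Without this, your construction could produce a total $\alpha$ that simply fails to diagonalize against some $\varphi_j$, and you would have no contradiction.

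A secondary point: indexing requirements by pairs $(\imath_e,\varphi_j)$ is redundant. Since $\mc{F}$ is normal over $\mathbb{F}_p$, its image in $\overline{\mc{F}}$ is a fixed set independent of the embedding, so one need only diagonalize against $\varphi_j$; the ``for all $x$'' in the non-covering property, via Lemma~\ref{forking-equivalence}, already rules out \emph{every} embedding simultaneously. The paper exploits this, working directly with conjugates in $\overline{\mc{F}}$ rather than through a varying $\imath_e$.
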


 In characteristic $p > 0$, all Galois groups are abelian, and so every Galois group has the non-covering property. Thus, in characteristic $p > 0$ the computable extendability of automorphisms property is equivalent to having a splitting algorithm. 

\subsection{Applications to difference closed fields}

Rabin~\cite{Rabin60} showed that every computable field can be computably embedded into its computable algebraic closure, and Harrington~\cite{Harrington74} later showed that every computable differential field can be computably embedded into a differential closure. We  consider the possibility of such a result for fields with a distinguished automorphism; such structures are called \emph{difference fields}~\cite{ChatzidakisHrushovski99}. An existential closure of such a structure analogous to an algebraically closed field exists and is called a \emph{difference closed field}. (We note that there is no such a thing as \textit{the} difference closure since there might be no ``smallest'' difference closed field containing a given difference field.  The formal definitions will follow later.)
In what follows next, we refer to this hypothetical analogous result as the Rabin-Harrington theorem.

We note that a difference field $(\mc{F}, \sigma)$ may distinguish a rather boring automorphism~$\sigma$, e.g., the identity, for which the Rabin-Harrington theorem clearly holds. On the other hand,  we will see that there exist computable difference fields that do not embed into any computable difference closed field.  Thus, the same field may have two different automorphisms, one witnessing the Rabin-Harrington theorem, and the other witnessing its failure, and finding a satisfactory characterization in this setting seems rather hopeless (yet the reader may try to find one). On the other hand, we are mostly interested in the \emph{properties of the underlying field} which make the Rabin-Harrington theorem hold, and we are not that much concerned with  the properties of some ``pathological'' automorphism that may witness the failure of the Rabin-Harrington theorem.
Thus, we arrive at the third main question addressed in the paper:

\medskip

$\,\,\,\,\,\,\,$ \emph{For which $\mathcal{F}$ does $(\mc{F}, \sigma)$ satisfy the  Rabin-Harrington theorem for all~$\sigma$?}

\medskip
\noindent Here of course $\mc{F}$ is a computable field and $\sigma$ ranges over all computable automorphisms of $\mc{F}$.
We show in Theorem~\ref{thm:embedding-into-acfa} that 
the Rabin-Harrington Theorem holds for difference fields with underlying field $\mc{F}$ if and only if $\mc{F}$ has the computable extension of automorphisms property. 
  Using our results on extending automorphisms, namely  the second main result of the paper (Theorem~\ref{maintheo2}), we can find a large class of difference fields which satisfy the Rabin-Harrington theorem
  for any interpretation of the distinguished automorphism:
  \begin{thm}\label{main:theo3} Let $\mc{F}$ be a computable normal extension of $\mathbb{F}_p$, for some prime $p$, such that $\gal(\mc{F}/\mathbb{F}_p)$ has the non-covering property. Then the following are equivalent:
  \begin{enumerate}
	\item $\mc{F}$ has a splitting algorithm,
	\item for any computable $\sigma$, $(\mc{F}, \sigma)$ can be computably embedded into a computable difference closed field.
	\end{enumerate}
  \end{thm}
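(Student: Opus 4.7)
The plan is to obtain Theorem~\ref{main:theo3} as an essentially formal consequence of the two earlier main results. The key observation is that Theorem~\ref{thm:embedding-into-acfa} already isolates the ``right'' algebraic-closure condition controlling computable embeddings into difference closed fields: condition~(2) here---that for \emph{every} computable $\sigma$ the difference field $(\mc{F},\sigma)$ has a computable embedding into some computable difference closed field---is equivalent, with no hypothesis on the Galois group, to $\mc{F}$ having the computable extendability of automorphisms property of Definition~\ref{def:3}. Under the non-covering hypothesis on $\gal(\mc{F}/\mathbb{F}_p)$, Theorem~\ref{maintheo2} in turn identifies this property with the existence of a splitting algorithm for~$\mc{F}$. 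Composing the two equivalences yields (1)$\Leftrightarrow$(2).

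To spell out the directions: for (1)$\Rightarrow$(2), fix any computable automorphism $\sigma$ of $\mc{F}$; Theorem~\ref{maintheo2} supplies a Rabin embedding $\imath\colon\mc{F}\to\overline{\mc{F}}$ and a computable automorphism $\widetilde{\sigma}$ of $\overline{\mc{F}}$ that $\imath$-extends~$\sigma$, and this is precisely the input that Theorem~\ref{thm:embedding-into-acfa} converts into a computable embedding of $(\mc{F},\sigma)$ into a computable difference closed field. The converse (2)$\Rightarrow$(1) is even more direct: assuming~(2), Theorem~\ref{thm:embedding-into-acfa} delivers the computable extendability of automorphisms property for every computable $\sigma$, from which Theorem~\ref{maintheo2} extracts a splitting algorithm.

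The expected obstacle lies not in the present argument but in the supporting results. The substantive content has been packaged into (a)~the difference-field analogue of Rabin's theorem encoded in Theorem~\ref{thm:embedding-into-acfa}, which requires a Henkin-style construction for the theory $\acfa$ built on top of a computable extension of the distinguished automorphism to the algebraic closure, and (b)~the non-covering preservation argument that powers Theorem~\ref{maintheo2}. A subtlety peculiar to the present statement---and the reason the automorphism-extension property is formulated in its variable-$\imath$ form, rather than with $\imath$ fixed in advance as in Theorem~\ref{thm:1}---is that the Rabin embedding used to compute an extension of $\sigma$ cannot be chosen uniformly beforehand: it must be allowed to depend on $\sigma$, because the algebraic closure of the image of~$\mc{F}$ inside the eventually constructed difference closed field need not agree with any pre-specified computable presentation of $\overline{\mc{F}}$.
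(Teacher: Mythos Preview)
Your proposal is correct and matches the paper's approach exactly: the paper derives Theorem~\ref{main:theo3} by composing Theorem~\ref{thm:embedding-into-acfa} (which identifies condition~(2) with the computable extendability of automorphisms property) with Theorem~\ref{maintheo2} (which, under the non-covering hypothesis, identifies that property with having a splitting algorithm). Your remark on why the variable-$\imath$ formulation is the appropriate one is also to the point.
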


\noindent Even without the non-covering property, (1) implies (2).
	
In particular, this theorem gives a complete answer to the third main question of the paper in the case of  a normal extension of $\mathbb{F}_p$ for any $p>0$. On the other hand, Theorem~\ref{main:theo3} will be used to produce various examples of computable difference fields that cannot be embedded into computable difference closed fields. We conclude that the most naive attempt to generalize the results of Rabin  and Harrington \emph{fails}. On the other hand,
if we allow the automorphism to vary, we get a complete characterization for a large class of fields. 
 
 \subsection{The non-covering property} Since our main results refer to the non-covering property of Galois groups, we would like to know more about the class of groups having this property. In Subsection~\ref{subsection:applications} we study the class of groups that have the non-covering property, with an emphasis on profinite groups. It is not hard to see that abelian groups and simple groups have the non-covering property (see Lemma~\ref{forking-equivalence}).
However, it takes a lot more effort to prove:

\begin{thm}\label{prop:product}
Let $\{G_i : i \in I\}$ be a collection of profinite groups, each of which has the non-covering property. Then $\prod_{i \in I} G_i$ has the non-covering property.
\end{thm}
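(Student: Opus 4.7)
The plan is to reduce to finite direct products of finite groups and then handle the two-factor case by analyzing a minimal normal subgroup. First, I would verify that the non-covering property (NCP) is preserved under quotients: a witness in $G/K$ lifts to one in $G$, since conjugacy-class-disjointness in the quotient is implied by conjugacy-class-disjointness upstairs (as $K$ is already contained in the subgroup $M$ we seek to avoid). In the profinite setting, any finite-index normal subgroup $M \trianglelefteq \prod_{i \in I} G_i$ is open and therefore contains $\prod_{i \notin F} G_i$ for some finite $F \subseteq I$; hence the finite quotient $\overline{G} = (\prod G_i)/M$ is a central image of the finite direct product $\prod_{i \in F} \overline{G_i}$, where $\overline{G_i} := G_i/(G_i \cap M)$ inherits NCP by the quotient preservation. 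By induction on $|F|$, it therefore suffices to prove NCP for $H_1 \times H_2$ whenever $H_1$ and $H_2$ are finite groups with NCP.

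The proof is driven by a preliminary \emph{key lemma}: if $\overline{H}$ is a finite group with NCP, then $[\overline{g}, \overline{H}] \cap Z(\overline{H}) = 1$ for every $\overline{g} \in \overline{H}$. To prove this, observe that the map $\overline{x} \mapsto [\overline{g}, \overline{x}]$, when restricted to the preimage of $Z(\overline{H})$, is a homomorphism into $Z(\overline{H})$, since $[\overline{g}, \overline{x}\overline{y}] = [\overline{g}, \overline{y}] \cdot \overline{y}^{-1}[\overline{g}, \overline{x}]\overline{y}$ and central values are fixed by conjugation. Thus $[\overline{g}, \overline{H}] \cap Z(\overline{H})$ is a subgroup of $Z(\overline{H})$; if nontrivial it would contain a minimal subgroup $\overline{P}$ of prime order, but NCP applied to the pair $(1, \overline{P})$ and element $\overline{g}$ then yields some $\overline{p} \in \overline{P}$ with $\overline{p} \notin [\overline{g}, \overline{H}]$, a contradiction.

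For the two-factor case, pass to $\overline{G} = (H_1 \times H_2)/M$ and replace $N$ by a subgroup $L$ with $M \subseteq L \subseteq N$ and $\overline{N} := L/M$ minimal normal in $\overline{G}$. Let $\overline{H_i}$ be the image of $H_i$ in $\overline{G}$, so that $[\overline{H_1}, \overline{H_2}] = 1$ and $\overline{G} = \overline{H_1}\,\overline{H_2}$; set $Z := \overline{H_1} \cap \overline{H_2}$ and note $Z \leq Z(\overline{H_i})$, since $Z \subseteq \overline{H_{3-i}}$ commutes with $\overline{H_i}$. Minimality of $\overline{N}$ forces $\overline{N} \cap \overline{H_i} \in \{1, \overline{N}\}$, which gives three cases. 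If $\overline{N} \subseteq \overline{H_i}$ but $\overline{N} \not\subseteq Z$, apply NCP of $\overline{H_i}/Z$ to the pair $(1, \overline{N}Z/Z)$ and the element $\overline{g_i}Z$, and lift the witness to $\overline{G}$ via the isomorphism $\overline{G}/\overline{H_{3-i}} \cong \overline{H_i}/Z$. If $\overline{N} \cap \overline{H_1} = \overline{N} \cap \overline{H_2} = 1$, a failure of NCP would force $\overline{N} \subseteq [\overline{g_1}, \overline{H_1}][\overline{g_2}, \overline{H_2}]$ via the commutator identity $[\overline{g_1}\overline{g_2}, \overline{x_1}\overline{x_2}] = [\overline{g_1}, \overline{x_1}][\overline{g_2}, \overline{x_2}]$ (valid because $[\overline{H_1}, \overline{H_2}] = 1$); projecting to $\overline{H_1}/Z$ embeds $\overline{N}$ nontrivially into the commutator set of $\overline{g_1}Z$, contradicting NCP of $\overline{H_1}/Z$.

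The remaining case $\overline{N} \subseteq Z$ is the main obstacle and the reason the key lemma is needed. Here $\overline{N}$ is necessarily cyclic of prime order (being minimal normal and central), and a failure of NCP would yield $\overline{N} \subseteq T_1 T_2$, where $T_i := [\overline{g_i}, \overline{H_i}] \cap Z$. A priori this is consistent with NCP of the factors taken separately, since $\overline{N}$ could sit diagonally inside $T_1 T_2$ while intersecting each $T_i$ trivially; this is precisely why naive case analysis breaks down here. But $Z \leq Z(\overline{H_i})$ together with the key lemma collapses the possibility: $T_i \subseteq [\overline{g_i}, \overline{H_i}] \cap Z(\overline{H_i}) = 1$, so $T_1 T_2 = 1$, forcing $\overline{N} = 1$ and contradicting $\overline{N} \neq 1$. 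This handles the last case and completes the proof.
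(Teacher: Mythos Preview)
Your argument is correct and genuinely different from the paper's. Both proofs share the same reduction step: pass from the full product to a finite subproduct using openness of $M$ in the profinite topology (you state this a bit more strongly than is literally true---an arbitrary finite-index normal subgroup of a profinite group need not be open, so one should read the non-covering property for open $M,N$, as the paper implicitly does too). Your ``central image'' phrasing is also slightly off: the kernel of $\prod_{i\in F}\overline{G_i}\to\overline{G}$ need not be central. But you never use centrality of that kernel---only that $\overline{G}$ is a quotient and that the images $\overline{H_1},\overline{H_2}$ commute---so the argument is unaffected.

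Where the two proofs diverge is in the two-factor case. The paper works directly with $M\subsetneq N$ inside $H_1\times H_2$, splits into cases according to whether $\pi_i(M)=\pi_i(N)$, and in the hardest case invokes Goursat's lemma plus a somewhat delicate computation (their Claim~4). You instead pass to the quotient $\overline{G}=(H_1\times H_2)/M$, shrink $\overline{N}$ to a \emph{minimal} normal subgroup, and split into cases according to how $\overline{N}$ sits relative to $\overline{H_1}\cap\overline{H_2}$. The decisive new ingredient is your key lemma that $[\,g,H\,]\cap Z(H)=1$ in any finite group $H$ with the non-covering property; this is a clean structural fact not isolated in the paper, and it dispatches the central case $\overline{N}\subseteq Z$ in one line. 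Your route buys a more conceptual explanation of why the diagonal/central obstruction cannot occur and avoids Goursat's lemma entirely, at the cost of first reducing to finite quotients (needed both for the existence of a minimal normal subgroup and for the prime-order step in the key lemma). The paper's route stays with general groups in the two-factor lemma but pays for it with a longer case analysis.
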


The proof of this theorem might be of some independent interest to the reader. It filters through Goursat's lemma \cite{Goursat89} (to be stated in the proof of Theorem~\ref{prop:product}). We note that our proof uses profiniteness to reduce the case of arbitrarily many direct factors to just two factors, and the proof of the case of just two factors (Lemma~\ref{lemma:keylemma}) does not use profiniteness. We leave open whether one can use profiniteness to simplify our proof of Lemma~\ref{lemma:keylemma}. We also note that some groups do not have the non-covering property (to be discussed). 

 \subsection{The structure of the paper}

We will begin in \S 2 by giving some background on computable fields and difference fields. In \S 3 we will consider embeddings into algebraically closed fields and the computable extendability of embeddings property, and prove the first main result, Theorem \ref{thm:1}. In \S 4 we will  consider automorphisms and the computable extendability of automorphisms property. We begin in \S 4.1 by considering a strengthening of the computable extendability of automorphisms property. In \S 4.2, we prove the second main result, Theorem \ref{maintheo2}. In \S 4.3, we study the class of groups with the non-covering property, and in \S 4.4 we give some applications of Theorem \ref{maintheo2}. In \S 5 we consider applications to difference fields and the Rabin-Harrington theorem. Finally, in \S 6 we state an open problem on the characterization of fields with the computable extendability of automorphisms property.


\section{Preliminaries}\label{Preliminaries}

\subsection{Separable and Purely Inseparable Extensions}

If $\mc{F}$ is a field, a polynomial $f \in \mc{F}[X]$ is called \textit{separable} if it has no repeated roots. A element $a \in \mc{E}$ of an algebraic field extension $\mc{E} / \mc{F}$ is called \textit{separable over} $\mc{F}$ if its minimal polynomial over $\mc{F}$ is a separable polynomial. An algebraic field extension $\mc{E} / \mc{F}$ is called \textit{separable} if every element of $\mc{E}$ is separable over $\mc{F}$. Recall that if $\mc{F}$ is finite or characteristic zero, then it is \textit{perfect}, i.e., every algebraic extension is a separable extension.

An algebraic field extension $\mc{E} / \mc{F}$ is called \textit{purely inseparable} if $\mc{E} \setminus \mc{F}$ contains no separable elements. Equivalently, $\mc{E}$ is a field of characteristic $p > 0$ and every element of $\mc{E}$ is the unique root of a polynomial $X^{p^n} - a = 0$ with $a \in \mc{F}$. Given an algebraic field extension $\mc{E} / \mc{F}$, the set
\[ \mc{F}^s = \{ a \in \mc{E} : a \text{ is separable over } \mc{F}\} \]
is the maximal separable extension of $\mc{F}$ inside of $\mc{E}$ and is called the \textit{separable closure} of $\mc{F}$ in $\mc{E}$. The field extension $\mc{E} / \mc{F}^s$ is purely inseparable. In the special case where $\mc{E} = \overline{\mc{F}}$ is the algebraic closure of $\mc{F}$, $\mc{F}^s$ is called the \textit{separable closure} of $\mc{F}$ and is the maximal separable extension of $\mc{F}$.

An algebraic field extension $\mc{E} / \mc{F}$ is \textit{normal} if every irreducible polynomial in $\mc{F}[X]$ that has a root in $\mc{E}$ factors completely in $\mc{E}[X]$.
A normal separable extension $\mc{E}/\mc{F}$ is called a Galois extension and has associated to it the Galois group $\gal(\mc{E}/\mc{F})$ of automorphisms of $\mc{E}$ fixing $\mc{F}$. Recall that the Galois group obeys the fundamental theorem of Galois theory: the normal subgroups $H \trianglelefteq \gal(\mc{E}/\mc{F})$ correspond to the intermediate normal field extensions.

\subsection{Computable Fields}


Recall that the \textit{splitting set} $S_\mc{F}$ of $\mc{F}$ is the set of all polynomials $p \in \mc{F}[X]$ which are reducible over $\mc{F}$. The splitting set of a field is not necessarily computable (see \cite[Lemma 7]{Miller08}), but it is always c.e. If the splitting set of $\mc{F}$ is computable, then we say that $\mc{F}$ has a \textit{splitting algorithm}. Finite fields and algebraically closed fields trivially have splitting algorithms. Kronecker \cite{Kronecker82} showed that $\mathbb{Q}$ has a splitting algorithm, and also that many other field extensions also have a splitting algorithm:

\begin{thm}[Kronecker \cite{Kronecker82}; see also \cite{vanderWaerden70}]\label{Kronecker}
The field $\mathbb{Q}$ has a splitting algorithm. If a computable field $\mc{F}$ has a splitting algorithm, and $a$ is transcendental over $\mc{F}$, then $\mc{F}(a)$ has a splitting algorithm. If $a$ is separable and algebraic over $\mc{F}$, then $\mc{F}(a)$ has a splitting algorithm. Moreover, the splitting algorithm for $\mc{F}(a)$ is uniform in the minimal polynomial for $a$ over $\mc{F}$.
\end{thm}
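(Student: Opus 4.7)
The plan is to handle the three assertions in turn; the uniformity claim will follow from inspection of the algorithms.

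\emph{Splitting algorithm for $\mathbb{Q}$.} I would follow Kronecker's classical interpolation argument. By Gauss's lemma, factoring $f \in \mathbb{Q}[X]$ of degree $n$ reduces to factoring its primitive associate in $\mathbb{Z}[X]$, so assume $f \in \mathbb{Z}[X]$. Any proper divisor may be taken of degree at most $\lfloor n/2 \rfloor$, hence is determined by its values at $\lfloor n/2\rfloor + 1$ integer arguments, say $0, 1, \ldots, \lfloor n/2\rfloor$; at each such $i$, the divisor's value must divide $f(i) \in \mathbb{Z}$, leaving finitely many candidates. Enumerate all tuples of divisors, reconstruct the candidate polynomial by Lagrange interpolation in $\mathbb{Q}[X]$, test divisibility by ordinary polynomial division, and declare $f$ reducible iff some candidate properly divides it.

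\emph{Transcendental extension.} Identifying $\mc{F}(a)$ with the rational function field $\mc{F}(X)$ and using Gauss's lemma over the UFD $\mc{F}[X]$, a nonconstant $f \in \mc{F}(X)[Y]$ is reducible iff its primitive lift to $\mc{F}[X,Y]$ is reducible. So it suffices to give a splitting algorithm for $\mc{F}[X,Y]$ using the one for $\mc{F}[X]$, and for this I would use Kronecker substitution: if $\deg_X f \leq m$, set $Y = X^{m+1}$ to obtain a univariate $f(X, X^{m+1}) \in \mc{F}[X]$. The map $g(X,Y) \mapsto g(X, X^{m+1})$ is injective on bivariate polynomials of $X$-degree $< m+1$, and any factor of $f$ with $X$-degree at most $m$ maps to a factor of $f(X, X^{m+1})$; so factor $f(X, X^{m+1})$ using the given splitting algorithm, run through all products of the resulting irreducibles, ``unfold'' each in base $X^{m+1}$ into a candidate bivariate polynomial (succeeding or failing effectively), and test divisibility against $f$.

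\emph{Separable algebraic extension.} Let $m(X) \in \mc{F}[X]$ be the minimal polynomial of $a$, of degree $d$, and lift $f(Y) \in \mc{F}(a)[Y]$ to $\tilde f(X, Y) \in \mc{F}[X, Y]$ with $\deg_X \tilde f < d$. I would use Trager's resultant method. First, search over $c = 0, 1, 2, \ldots$ until the polynomial
\[
N_c(Y) \;=\; \mathrm{Res}_X\bigl(m(X),\, \tilde f(X,\, Y - cX)\bigr) \;\in\; \mc{F}[Y]
\]
is separable, which is checkable via $\gcd(N_c, N_c') = 1$ using the Euclidean algorithm; separability of the extension guarantees such a $c$ exists. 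The roots of $N_c$ in $\overline{\mc{F}}$ are exactly the elements $c\, a^\sigma + \beta$, where $\sigma$ runs over the $d$ embeddings $\mc{F}(a) \hookrightarrow \overline{\mc{F}}$ and $\beta$ over roots of $f^\sigma$. Second, factor $N_c = \prod_i q_i$ in $\mc{F}[Y]$ using the splitting algorithm for $\mc{F}$. Third, recover each irreducible factor of $f$ over $\mc{F}(a)$ as $g_i(Y) := \gcd_{\mc{F}(a)[Y]}\bigl(q_i(Y + ca),\, f(Y)\bigr)$ via the Euclidean algorithm in the computable ring $\mc{F}(a)[Y]$; then $f = \prod_i g_i$, and $f$ is reducible iff some $g_i$ is a proper divisor.

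\emph{Uniformity and main obstacle.} Every step above depends computably on the coefficients of $m$ and on the splitting algorithm for $\mc{F}$, so the produced algorithm for $\mc{F}(a)$ is uniform in $m$. I expect the principal difficulty to lie in Part~3, namely verifying rigorously that for ``good'' $c$ the map $q_i \mapsto g_i$ is a bijection between the $\mc{F}$-irreducible factors of $N_c$ and the $\mc{F}(a)$-irreducible factors of $f$, and that each $\gcd$ step extracts precisely the intended factor; this relies on a careful analysis of Galois orbits combined with the defining property of the primitive element $a + c\beta$. Once this correspondence is established, everything reduces to finite searches, Euclidean $\gcd$s in $\mc{F}(a)[Y]$, and invocations of the splitting algorithms for $\mc{F}$ and for $\mc{F}[X]$.
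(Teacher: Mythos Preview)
The paper does not prove this theorem; it is quoted as a classical result with references to Kronecker and van der Waerden, so there is no in-paper argument to compare against. Your reconstruction of Parts~1 and~2 is the standard one and is correct (in Part~1 you should first dispose of any $i$ with $f(i)=0$ before running the divisor search, but this is trivial).

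The genuine gap is in Part~3, in positive characteristic. Your search for a shift $c$ making $N_c$ separable terminates only when $f$ itself is squarefree; separability of $a$ over $\mc{F}$ says nothing about $f$. If $\operatorname{char}\mc{F}=p>0$ and $f'=0$, say $f(Y)=Y^p-b$ with $b\in\mc{F}(a)$, then for every $c$ one has $N_c(Y)=\prod_\sigma\bigl((Y-c\,a^\sigma)^p-b^\sigma\bigr)$, a product of $p$th powers, so $N_c$ is never squarefree and your loop does not halt. The repair is to first compute $\gcd(f,f')$: if it is a nonconstant proper divisor, $f$ is reducible; if it is constant, $f$ is squarefree and Trager applies; if it equals $f$, write $f(Y)=g(Y^{p^k})$ with $g'\neq 0$ and recurse on $g$. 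Once $g$ is found irreducible (hence separable) by Trager, you must still decide whether $g(Y^{p^k})$ factors over $\mc{F}(a)$, and this holds iff every coefficient of $g$ is a $p$th power in $\mc{F}(a)$. Because $a$ is separable, $1,a^p,\dots,a^{(d-1)p}$ is an $\mc{F}$-basis of $\mc{F}(a)$, so ``is $b\in\mc{F}(a)$ a $p$th power?'' reduces to a linear system over $\mc{F}$ followed by finitely many queries ``is $X^p-w$ reducible over $\mc{F}$?'', each answerable by the given splitting algorithm. With this addition the argument is complete and uniformity in the minimal polynomial $m$ is preserved.
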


Given a field $\mc{F}$ and an element $a$ which is either transcendental over $\mc{F}$, or separable and algebraic over $\mc{F}$, we know that $\mc{F}(a)$ has a splitting algorithm. However, the algorithm depends on whether $a$ is transcendental or algebraic. To find a splitting algorithm uniformly, we must know which is the case.

Rabin \cite{Rabin60} showed that every computable field $\mc{F}$ has a computable algebraic closure $\overline{\mc{F}}$, and moreover there is a computable embedding $\imath \colon \mc{F} \to \overline{\mc{F}}$. We call such an embedding a \textit{Rabin embedding}. Moreover, he characterized the image of $\mc{F}$ under this embedding:

\begin{thm}[Rabin \cite{Rabin60}]
Let $\mc{F}$ be a computable field. Then there is a computable algebraically closed field $\overline{\mc{F}}$ and a computable field embedding $\imath \colon \mc{F} \to \overline{\mc{F}}$ such that $\overline{\mc{F}}$ is algebraic over $\imath(\mc{F})$. Moreover, for any such $\overline{\mc{F}}$ and $\imath$, the image $\imath(\mc{F})$ of $\mc{F}$ in $\overline{\mc{F}}$ is Turing equivalent to the splitting set of $\mc{F}$. 
\end{thm}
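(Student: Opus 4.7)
The plan is to establish two assertions: the existence of a computable presentation of $\overline{\mc{F}}$ together with a computable embedding $\imath$ whose image has algebraic closure equal to all of $\overline{\mc{F}}$, and the Turing equivalence $\imath(\mc{F}) \equiv_T S_\mc{F}$ for any such presentation.

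For existence, I would build $\overline{\mc{F}}$ on domain $\omega$ and the graph of $\imath$ in stages. Enumerating $\mc{F} = \{a_0, a_1, \ldots\}$, at stage $s$ I would commit $a_s$ to the domain of $\imath$, and then ensure that the $s$-th polynomial in an effective enumeration of polynomials over the already-committed image acquires a root via a Kronecker-style quotient construction. Two issues block a naive implementation: first, we cannot in general decide algebraic dependencies in $\mc{F}$ to locate $\imath(a_s)$ among previously-committed elements; second, we cannot decide which factor of the current polynomial is irreducible over the current finite substructure. Following Rabin, I would handle both obstacles by working with formal polynomial expressions and maintaining a c.e.\ equivalence relation that records discovered algebraic identifications, refining the presentation whenever collapses are enumerated, so that in the limit the resulting structure is a computable field that is algebraic and algebraically closed over $\imath(\mc{F})$.

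For $\imath(\mc{F}) \leq_T S_\mc{F}$: given $b \in \overline{\mc{F}}$, I would computably search $\mc{F}[X]$ for a nonzero $p$ with $\imath(p)(b) = 0$, which must exist since $\overline{\mc{F}}$ is algebraic over $\imath(\mc{F})$. Using the splitting algorithm, I would factor $p$ into irreducibles over $\mc{F}$ (search for nontrivial factorizations of reducible polynomials) and locate the unique-up-to-scalar irreducible factor $q$ annihilating $b$ by evaluating $\imath(q)$ in $\overline{\mc{F}}$. Then $b \in \imath(\mc{F})$ if and only if $\deg q = 1$, in which case the preimage is the negative of the constant coefficient of $q$.

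Conversely, for $S_\mc{F} \leq_T \imath(\mc{F})$: given a non-constant $p \in \mc{F}[X]$, I would compute the multiset of all roots of $\imath(p)$ in $\overline{\mc{F}}$ by repeated root-finding and division in $\overline{\mc{F}}[X]$. Then $p$ is reducible over $\mc{F}$ if and only if some proper nonempty submultiset $T$ of the roots satisfies that every coefficient of $\prod_{b \in T}(X - b) \in \overline{\mc{F}}[X]$ lies in $\imath(\mc{F})$; this requires only finitely many membership queries to the $\imath(\mc{F})$ oracle. The step I expect to be hardest is the existence construction: without being able to decide irreducibility on the fly, Kronecker adjunctions risk producing rings with zero divisors, and Rabin's workaround---adjoining roots greedily while collapsing c.e.-detected identifications in the limit---requires careful bookkeeping to keep the final structure genuinely computable.
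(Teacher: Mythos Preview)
The paper does not actually prove this theorem: it is stated in the preliminaries as a result of Rabin \cite{Rabin60} and used as background throughout, with no proof supplied. The only hint the paper gives is a parenthetical remark in the proof of Theorem~\ref{thm:1}, where the authors note that Rabin's original argument constructed the algebraic closure as a quotient of a polynomial ring in infinitely many variables, and that a Henkin-style construction (as in Friedman--Simpson--Smith) gives an alternative route.

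Your existence argument is essentially a retelling of Rabin's original approach: adjoin formal roots stage by stage and maintain a c.e.\ equivalence relation that collapses elements as algebraic identifications are discovered. This is correct in outline, and your identification of the delicate point---that one cannot decide irreducibility on the fly, so naive Kronecker steps may introduce zero divisors that must later be quotiented out---is exactly the issue Rabin's construction handles. The alternative Henkin route the paper alludes to sidesteps this bookkeeping by working with a decidable complete theory (algebraically closed fields plus the atomic diagram of $\mc{F}$) and omitting the transcendental type; that is arguably cleaner, but your approach is the historically original one and perfectly sound.

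Your two reductions for the Turing equivalence are the standard ones and are correct. One small point: in the direction $S_{\mc{F}} \leq_T \imath(\mc{F})$, you should also handle the edge case where $p$ has a repeated factor (so that irreducibility is witnessed not by a proper submultiset of roots but by a proper factor of the form $(X-b)^k$); this is covered automatically if you phrase the test as ``some nontrivial proper factor of $\imath(p)$ in $\overline{\mc{F}}[X]$ has all coefficients in $\imath(\mc{F})$,'' which your submultiset formulation already does once multiplicities are tracked.
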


A computable field $\mc{F}$ has a \textit{dependence algorithm} if given $a$ and $b_1,\ldots,b_n$, we can compute whether $a$ is algebraically independent over $b_1,\ldots,b_n$. A field has a dependence algorithm if and only if it has a computable transcendence base (see, for example, \cite[Proposition 2.2]{HarrisonTrainorMelnikovMontalban}). In particular, fields of finite transcendence degree have a dependence algorithm.

\medskip

\noindent \emph{Convention.} By an extension $\mc{E} / \mc{F}$ of computable fields, we mean that there is a computable embedding of $\mc{F}$ into $\mc{E}$.

\subsection{Difference fields} 
Difference fields were first studied by Ritt in the 1930s. A good reference on the classical algebraic theory of difference fields is the book by Cohn \cite{Cohn65}. A difference field is a field $\mc{F}$ together with an embedding $\sigma \colon \mc{F} \to \mc{F}$. If $\sigma$ is onto, $(\mc{F},\sigma)$ is called \textit{inversive}. As every difference field has a unique inversive closure up to isomorphism, we lose nothing by assuming that all of our difference fields are inversive.

A difference field $(\mc{F},\sigma)$ is called a \textit{difference closed field} if it is existentially closed in the language of difference fields. Difference closed fields arose in the model theoretic study of difference fields (see \cite{Macintyre97} and \cite{ChatzidakisHrushovski99}). $\mc{F}$ is difference closed if and only if:
\begin{enumerate}
	\item[(i)] $\sigma$ is an automorphism of $\mc{F}$;
	\item[(ii)] $\mc{F}$ is algebraically closed;
	\item[(iii)] For every variety $U$, every affine variety $V \subseteq U \times \sigma(U)$ which projects generically onto $U$ and $\sigma(U)$, and every algebraic set $W \subsetneq V$, there is an $\mc{F}$-rational point $a \in U(\mc{F})$ such that $(a,\sigma(a)) \in V \setminus W$.
\end{enumerate}
The condition (iii) may be viewed as saying that certain systems of equations and inequations have solutions in $\mc{F}$.
Conditions (i), (ii), and (iii) axiomatize the theory $ACFA$ of \textit{difference closed fields}. $ACFA$ is decidable, and moreover the theories $ACFA_p$ of difference closed fields of characteristic $p$ are also decidable for any $p$, including $p = 0$ \cite[(1.4) of][]{ChatzidakisHrushovski99}. $ACFA$ is the model companion of the theory of difference fields \cite[(1.4) of][]{ChatzidakisHrushovski99} and hence every formula is equivalent, modulo $ACFA$, to an existential formula \cite[(1.6) of][]{ChatzidakisHrushovski99}. Thus, we have:

\begin{fact}
Every computable difference closed field has a computable (full) elementary diagram.
\end{fact}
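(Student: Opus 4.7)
The plan is to deduce this from three facts already stated: $ACFA_p$ is decidable for every characteristic $p$ (including $0$), every formula is $ACFA$-equivalent to an existential one, and (trivially) every computable structure has a computable atomic diagram. Let $(\mc{F},\sigma)$ be a computable difference closed field. Reading off the characteristic from the atomic diagram, we know that $(\mc{F},\sigma)$ is a model of the complete, decidable theory $ACFA_p$, so we have a decision procedure for the pure $ACFA_p$-theory.

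First I would reduce to deciding existential formulas with parameters. Given an arbitrary $\mc{L}$-formula $\varphi(\bar{x})$, I enumerate existential formulas $\psi(\bar{x})$ and, for each, use the decision procedure for $ACFA_p$ to test whether $ACFA_p \vdash \forall \bar{x}\,(\varphi(\bar{x}) \leftrightarrow \psi(\bar{x}))$. By the model-companion fact, such a $\psi$ exists, so this search halts; call the output $\psi^+_\varphi$. Running the same search on $\neg\varphi(\bar{x})$ produces an existential $\psi^-_\varphi$ that is $ACFA_p$-equivalent to $\neg\varphi$. Both $\psi^+_\varphi$ and $\psi^-_\varphi$ are computed uniformly from $\varphi$.

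Now, to decide whether $(\mc{F},\sigma)\models \varphi(\bar{a})$ for a tuple $\bar{a}$ from $\mc{F}$, I compute $\psi^+_\varphi$ and $\psi^-_\varphi$ and then dovetail two searches: one for a witness tuple showing $(\mc{F},\sigma) \models \psi^+_\varphi(\bar{a})$, the other for a witness showing $(\mc{F},\sigma)\models \psi^-_\varphi(\bar{a})$. Because $\psi^+_\varphi$ and $\psi^-_\varphi$ are existential and the atomic diagram of $(\mc{F},\sigma)$ is computable, each search is a $\Sigma^0_1$ semi-decision procedure. Since $(\mc{F},\sigma)$ is a model of $ACFA_p$ and $\varphi, \neg\varphi$ are mutually exclusive and exhaustive, exactly one of the two searches terminates, and its termination decides the truth of $\varphi(\bar{a})$.

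I do not expect a genuine obstacle: the only step requiring care is noting that the reduction from an arbitrary formula to an existential one can itself be performed algorithmically. This is where decidability of $ACFA_p$ is used essentially — without it, knowing only that such an existential formula exists would not suffice to produce one uniformly. Once that point is made, the argument is the standard Henkin-style observation that a decidable complete theory together with a computable atomic diagram of one of its models yields a computable elementary diagram, specialized here via model completeness of $ACFA$.
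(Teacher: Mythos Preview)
Your proof is correct and is essentially the argument the paper intends: the paper simply states that decidability of $ACFA_p$ together with the equivalence of every formula to an existential one yields the fact, and you have spelled out the standard details of that deduction. There is nothing to add.
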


We call a structure with a computable elementary diagram \textit{decidable}; thus every difference closed field is decidable.

\section{Extending Embeddings into the Algebraic Closure}
\label{Embeddings}

We begin by showing that if $\mc{F}$ is any computable field with a splitting algorithm, $\imath \colon \mc{F} \to \overline{\mc{F}}$ is a Rabin embedding, and $\alpha \colon \mc{F} \to \mc{K}$ is a computable embedding of $\mc{F}$ into an algebraically closed field $\mc{K}$, then there is a computable embedding of $\overline{\mc{F}}$ into $\mc{K}$ extending $\alpha$. In particular, the new results here are in the case of characteristic $p > 0$. The new issue we have to deal with in characteristic $p > 0$ is that Theorem \ref{Kronecker} fails for non-separable extensions. We begin by finding the separable closure of a field $\mc{F}$ within its algebraic closure $\overline{\mc{F}}$.

\begin{lem}\label{lem:sep-closure}
Let $\mc{F}$ be a computable field. Then the separable closure of $\mc{F}$ is c.e. If $\mc{F}$ has a splitting algorithm, then the separable closure $\mc{F}^s$ of $\mc{F}$ in $\overline{\mc{F}}$ is computable (so that $\mc{F}^s$ has a splitting algorithm).
\end{lem}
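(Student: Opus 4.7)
The plan is to handle the c.e.\ and computable parts by direct search arguments using the characterization of separability via $\gcd(p,p')$, and then to deduce the splitting algorithm for $\mc{F}^s$ from Rabin's theorem once $\mc{F}^s$ is known to be a computable subset of $\overline{\mc{F}}$.

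For the c.e.\ part, the key observation is that an element $a \in \overline{\mc{F}}$ lies in $\mc{F}^s$ if and only if there exists a separable polynomial $p \in \mc{F}[X]$ with $p(a)=0$: the minimal polynomial of $a$ over $\mc{F}$ divides every such $p$, so if $p$ has no repeated roots then neither does the minimal polynomial. Both the separability test $\gcd(p,p')=1$ (via the Euclidean algorithm in $\mc{F}[X]$) and the vanishing test $p(a)=0$ in $\overline{\mc{F}}$ (applying the computable $\imath$ to the coefficients of $p$ and evaluating in $\overline{\mc{F}}$) are uniformly computable, so the above condition is a $\Sigma^0_1$ description of $\mc{F}^s$, giving a c.e.\ enumeration.

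Now assume $\mc{F}$ has a splitting algorithm. To decide whether a given $a \in \overline{\mc{F}}$ lies in $\mc{F}^s$, I would effectively locate its minimal polynomial: enumerate the non-constant polynomials $q \in \mc{F}[X]$, use the splitting algorithm to retain only those that are irreducible, and test each retained $q$ for $q(a)=0$ in $\overline{\mc{F}}$. Since $a$ is algebraic over $\imath(\mc{F})$, an irreducible $q$ with $q(a)=0$ is eventually found, and any such $q$ is, up to a unit, the minimal polynomial of $a$. Then $a$ is separable over $\mc{F}$ precisely when $\gcd(q,q')=1$, which is decidable by the Euclidean algorithm. This gives a decision procedure for membership in $\mc{F}^s$, so $\mc{F}^s$ is a computable subset of $\overline{\mc{F}}$.

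For the parenthetical remark, this computable subset inherits the field operations of $\overline{\mc{F}}$ and so is a computable field in its own right; moreover the inclusion $\mc{F}^s \hookrightarrow \overline{\mc{F}}$ is a computable Rabin embedding of $\mc{F}^s$ into its algebraic closure $\overline{\mc{F}}$ (algebraic closedness of $\overline{\mc{F}}$ is given, and $\overline{\mc{F}}$ is algebraic over $\mc{F} \subseteq \mc{F}^s$). By Rabin's theorem stated earlier, the splitting set of $\mc{F}^s$ is Turing equivalent to the image of this embedding, which is computable by what was just proved; hence $\mc{F}^s$ has a splitting algorithm. The argument presents no serious obstacle; the one modest subtlety worth flagging is to search for an \emph{irreducible} $q$ directly using the splitting algorithm, rather than finding any $q$ vanishing at $a$ and then factoring it, which is why a bare splitting algorithm (and not a separate factorization algorithm) is enough.
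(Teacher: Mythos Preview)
Your proof is correct and follows essentially the same route as the paper's: search for a separable polynomial vanishing at $a$ for the c.e.\ part, and use the splitting algorithm to locate the minimal polynomial and test its separability for the computable part. The only cosmetic differences are that the paper phrases the separability test as ``degree $m$ with $m$ distinct roots in $\overline{\mc{F}}$'' (respectively $p'(a)\neq 0$) rather than $\gcd(p,p')=1$, and that you spell out the Rabin-theorem justification for the parenthetical splitting algorithm on $\mc{F}^s$, which the paper leaves implicit.
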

\begin{proof}

Embed $\mc{F}$ in its algebraic closure $\overline{\mc{F}}$. An element $a \in \overline{\mc{F}}$ is separable if and only if there is a polynomial $p(X) \in \mc{F}[X]$ of degree $m$ with $p(a) = 0$ and with $m$ distinct roots in $\overline{\mc{F}}$. Thus the separable closure of $\mc{F}$ is c.e. If $\mc{F}$ has a splitting algorithm, then given $a \in \overline{\mc{F}}$ we can find the minimal polynomial $p$ of $a$ over $\mc{F}$. Then $a$ is separable over $\mc{F}$ if and only if $p$ has no repeated roots, which happens if and only if $p'(a) \neq 0$. (Here, $p'(X)$ is the derivative of $p(X)$ with respect to $X$, treating the coefficients as constants.) So the separable closure of $\mc{F}$ is computable.
\end{proof}

We are now ready to extend an embedding from a field with a splitting algorithm. The main idea is to break the embedding into two steps; first to extend an embedding $\alpha \colon \mc{F} \to \mc{K}$ to an embedding $\beta \colon \mc{F}^s \to \mc{K}$ of the separable closure of $\mc{F}$ into $\mc{K}$, and second to note that $\beta$ extends to a unique embedding of $\overline{\mc{F}}$ into $\mc{K}$ and that this extension is computable from $\beta$.
 
\begin{thm}\label{thm:splitting-implies-turing-functional}
Let $\mc{F}$ be a computable field and $\imath \colon F \to \overline{\mc{F}}$ a Rabin embedding of $\mc{F}$ into its algebraic closure. Suppose that $\mc{F}$ has a splitting algorithm. Then there is a Turing functional $\Phi$ such that whenever $\alpha: \mc{F} \to \mc{K}$ is an embedding of $\mc{F}$ into an algebraically closed field $\mc{K}$, $\Phi^{\alpha \oplus \mc{K}} \colon \overline{\mc{F}} \to \mc{K}$ is an embedding of $\overline{\mc{F}}$ into $\mc{K}$ $\imath$-extending $\alpha$.
\end{thm}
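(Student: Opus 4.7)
The plan is to factor the desired embedding $\beta : \overline{\mc{F}} \to \mc{K}$ through the separable closure $\mc{F}^s$ of $\mc{F}$ inside $\overline{\mc{F}}$, exploiting the tower $\imath(\mc{F}) \subseteq \mc{F}^s \subseteq \overline{\mc{F}}$. The lower piece $\mc{F}^s / \imath(\mc{F})$ is separable, allowing me to extend $\alpha$ one element at a time by searching $\mc{K}$ for roots of separable polynomials. The upper piece $\overline{\mc{F}} / \mc{F}^s$ is purely inseparable and hence rigid: in characteristic $p > 0$, any $a \in \overline{\mc{F}} \setminus \mc{F}^s$ is the unique root in $\overline{\mc{F}}$ of some $X^{p^n} - b$ with $b \in \mc{F}^s$, so $\beta(a)$ is forced to be the unique $p^n$-th root of $\beta(b)$ in $\mc{K}$.

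First I would construct a Turing functional $\Psi$ which, on oracle $\alpha \oplus \mc{K}$, computes an embedding $\Psi^{\alpha\oplus\mc{K}} : \mc{F}^s \to \mc{K}$ extending $\alpha$. By Lemma~\ref{lem:sep-closure} the subfield $\mc{F}^s \subseteq \overline{\mc{F}}$ is computable and has a splitting algorithm, so I can list its elements effectively as $e_0, e_1, e_2, \ldots$. I extend $\beta$ stage by stage: at stage $n$, having defined $\beta$ on $L_n := \imath(\mc{F})(e_0, \ldots, e_{n-1})$, compute the minimal polynomial $p_n \in L_n[X]$ of $e_n$ over $L_n$. Iterating Theorem~\ref{Kronecker} yields a splitting algorithm for $L_n$ (and thus an algorithm to compute minimal polynomials over it), uniformly in $n$, because each $e_i$ is separable algebraic over the preceding subfield. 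If $\deg p_n = 1$, then $e_n \in L_n$ and $\beta(e_n)$ is already determined by previous values. Otherwise, form $p_n^\beta \in \mc{K}[X]$ by applying $\beta$ coefficient-wise, and search $\mc{K}$ using the atomic-diagram oracle for a root of $p_n^\beta$; one exists because $\mc{K}$ is algebraically closed. Define $\beta(e_n)$ to be the first such root found. The universal property of $L_n[X]/(p_n) \cong L_{n+1}$ guarantees that this extends $\beta$ to $L_{n+1}$ as a field embedding.

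To compute $\Phi^{\alpha \oplus \mc{K}}(a)$ on input $a \in \overline{\mc{F}}$, first decide whether $a \in \mc{F}^s$ using Lemma~\ref{lem:sep-closure}. If so, return $\Psi^{\alpha \oplus \mc{K}}(a)$, i.e.\ run the above construction until $a$ is processed and return its assigned value. Otherwise $\mc{F}$ has characteristic $p > 0$ (since in characteristic $0$, $\mc{F}^s = \overline{\mc{F}}$), so find the least $n \geq 1$ with $b := a^{p^n} \in \mc{F}^s$, compute $\Psi^{\alpha \oplus \mc{K}}(b)$, and search $\mc{K}$ for an element $c$ with $c^{p^n} = \Psi^{\alpha \oplus \mc{K}}(b)$. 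Such $c$ exists by algebraic closedness of $\mc{K}$ and is unique, since $(c_1 - c_2)^{p^n} = c_1^{p^n} - c_2^{p^n}$ in characteristic $p$ forces at most one $p^n$-th root; return $c$. Compatibility with the field operations is automatic: on $\mc{F}^s$ the functional agrees with the homomorphism $\Psi^{\alpha \oplus \mc{K}}$, and extension across the purely inseparable gap is the unique field-theoretic extension and hence itself a homomorphism.

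The main obstacle I anticipate is the bookkeeping in the separable stage: maintaining splitting algorithms for the growing subfields $L_n$ uniformly in $n$, and, when a newly enumerated $e_n$ already lies in $L_n$, effectively reconstructing its $\beta$-image from previous commitments. Both are resolved by carefully invoking the uniformity built into Theorem~\ref{Kronecker}, which promotes the splitting algorithm of $L_n$ to that of $L_{n+1}$ given the minimal polynomial of $e_n$ over $L_n$. The purely inseparable stage is, by contrast, essentially forced, so it presents no computational difficulty beyond an oracle search for the unique $p^n$-th root in $\mc{K}$.
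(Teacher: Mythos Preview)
Your proposal is correct and follows essentially the same route as the paper: factor through the separable closure $\mc{F}^s$ (computable with a splitting algorithm by Lemma~\ref{lem:sep-closure}), extend $\alpha$ element by element over the separable part using iterated Kronecker to maintain splitting algorithms, and then handle the purely inseparable remainder $\overline{\mc{F}}/\mc{F}^s$ via unique $p^n$-th roots. The only cosmetic difference is that the paper finds the minimal polynomial of $a$ over $\mc{F}^s$ directly (using the splitting algorithm for $\mc{F}^s$) rather than searching for the least $n$ with $a^{p^n}\in\mc{F}^s$, but these are equivalent computations.
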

\begin{proof}
Since $\mc{F}$ has a splitting algorithm, the image $\imath(\mc{F})$ of $\mc{F}$ in $\overline{\mc{F}}$ is computable. We may identify $\mc{F}$ with its image. By Lemma \ref{lem:sep-closure} the separable closure $\mc{F}^s$ of $\mc{F}$ is computable as a subset of $\overline{\mc{F}}$ and has a splitting algorithm.

Let $\mc{K}$ be an algebraically closed field and $\alpha \colon \mc{F} \to \mc{K}$ a field embedding. We will begin by describing a procedure to extend $\alpha$ to an embedding $\beta \colon \mc{F}^s \to \mc{K}$. Let $\{a_1,a_2,\ldots\}$ be an enumeration of the elements $F^s$. Start with $\beta$ defined only on $\mc{F}$ and $\imath$-extending $\alpha$. Using the splitting algorithm for $\mc{F}$, find the minimal polynomial $P_1 \in \mc{F}[X]$ of $a_1$ over $\mc{F}$. Find a solution $b_1 \in \mc{K}$ to $\alpha(P_1)$. Then define $\beta$ on $\mc{F}(a_1)$ by mapping $a_1$ to $b_1$. Since $a_1$ is algebraic and separable over $\mc{F}$ (and we know its minimal polynomial), we have a splitting algorithm for $\mc{F}(a_1)$. The separable closure of $\mc{F}(a_1)$ is $\mc{F}^s$. Now find the minimal polynomial $P_2 \in \mc{F}[X]$ of $a_2$ over $\mc{F}(a_1)$, and a solution $b_2$ to $\alpha(P_2)$. Define $\beta$ on $\mc{F}(a_1,a_2)$ by mapping $a_2$ to $b_2$. Note that $a_2$ is separable over $\mc{F}(a_1)$ since
\[ \mc{F} \subseteq \mc{F}(a_1) \subseteq \mc{F}(a_1,a_2) \subseteq \mc{F}^s\]
and $\mc{F}^s$ is a separable algebraic extension of $\mc{F}$. Since $a_2$ is algebraic and separable over $\mc{F}(a_1)$, we have a splitting algorithm for $\mc{F}(a_1,a_2)$. Its separable closure is still $\mc{F}^s$. Continuing in this way, we define an embedding $\beta: \mc{F}^s \to \mc{K}$ which $\imath$-extends $\alpha: \mc{F} \to \mc{K}$.

In characteristic zero, we are done since $\mc{F}^s = \overline{\mc{F}}$. In characteristic $p > 0$, we can extend $\beta$ to an embedding $\overline{\mc{F}} \to \mc{K}$ in the following manner. Given $b \in \overline{\mc{F}}$, find the minimal polynomial $P \in \mc{F}^s[X]$ of $b$ over $\mc{F}^s$ (recalling that $\mc{F}^s$ has a splitting algorithm). Then $P(X)$ is of the form $X^{p^n} - r = 0$ with $r \in \mc{F}$. Note that $b$ is the unique solution of $p(X) = 0$, and we can find the unique solution $c$ to $\beta(p)(X) = 0$. Map $b$ to $c$. This is the unique embedding of $\overline{\mc{F}}$ into $\mc{K}$ extending $\beta$.

The construction was uniform in $\alpha$ and $\mc{K}$, and so we get the desired Turing functional $\Phi$.
\end{proof}

We are now ready to prove Theorem \ref{thm:1}, which says that a field $\mc{F}$ has a splitting algorithm if and only if it has the computable (or uniform) extendability of embeddings property. 

\begin{proof}[Proof of Theorem \ref{thm:1}]
The implication $(1) \Rightarrow (2)$ is Theorem \ref{thm:splitting-implies-turing-functional}. The implication $(2) \Rightarrow (3)$ is immediate. It remains to show the implication $(3) \Rightarrow (1)$.

Fix $\imath: \mc{F} \to \overline{\mc{F}}$, a computable embedding of $\mc{F}$ into a computable presentation $\overline{\mc{F}}$ of its algebraic closure. Suppose that every computable embedding of $\mc{F}$ into a computable algebraically closed field $\mc{K}$ $\imath$-extends to a computable embedding of $\overline{\mc{F}}$ into $\mc{K}$.

We will attempt to construct a computable field $\mc{K}$ and a computable embedding $\alpha \colon \mc{F} \to \mc{K}$ while attempting to diagonalize against all potential computable extensions $\varphi_e : \overline{\mc{F}} \to \mc{K}$ (by having $\alpha(a) \neq \varphi_e(\imath(a))$ for some $a \in \mc{F}$). We know that the construction must fail, and from this we will conclude that $\mc{F}$ has a splitting algorithm.

We construct $\mc{K}$ by an effective Henkin-style construction. The Henkin construction will be similar to one that can be used to prove Rabin's theorem that every field embeds into a computable presentation of its algebraic closure. See, for example, \cite[Theorem 2.5]{FriedmanSimpsonSmith83} where this construction is carried out in reverse mathematics. (Rabin's original proof constructed the algebraic closure using a quotient of a polynomial ring with infinitely many variables.) Let $\mathcal{L}_{F}$ be the language of fields with constant symbols for the elements of $\mc{F}$, and let $T$ be the consistent theory of algebraically closed fields together with the atomic diagram of $\mc{F}$. By quantifier elimination for the theory of algebraically closed fields, $T$ is a complete theory and hence is decidable. We want to construct a decidable prime model of the theory $T$, which gives an algebraic closure $\mc{K}$ of $\mc{F}$ together with an embedding of $\mc{F}$ into $\mc{K}$. The embedding $\alpha: \mc{F} \to \mc{K}$ will be built as part of the Henkin construction. Constructing a prime model requires a slight modification of the Henkin construction, which is possible in this case---we must also omit the type of an element that is transcendental over $\mc{F}$ (see \cite{Millar83} for the general theorem on effectively omitting types).

Let $C = \{c_0,c_1,\ldots\}$ be the new constant symbols for the Henkin construction. The domain of $\mc{K}$ will be the equivalence classes of some computable equivalence relation on $C$. Let $\varphi_e : \overline{\mc{F}} \to C$ be a list of partial computable functions which we interpret as the possible computable embeddings $\overline{\mc{F}} \to \mc{K}$. Let $\{a_0,a_1,a_2,\ldots\}$ be a computable enumeration of $\mc{F}$. We use $\underline{a}_{i}$ to denote the constant symbol associated with $a_i \in \mc{F}$.

\medskip{}

\textit{Construction.} At each stage $s$, we define formulas $\delta_0,\ldots,\delta_s$ in the language $\mathcal{L}_{\mc{F} \cup C}$ which form the partial diagram of $\mc{K}$ at stage $s$. The theory $\Delta = \{\delta_0,\delta_1,\ldots\}$ will be a complete theory extending $T$ which is the complete diagram of the model $\mc{K}$ (with the domain of $\mc{K}$ being the equivalence classes in $C$ by the equivalence relation $c \sim d \Leftrightarrow \Delta \vdash c = d$). At stage $s$, let $\psi_s = \delta_0 \wedge \cdots \wedge \delta_{s-1}$. We can arrange the construction so that the only constant symbols from $\mc{F}$ that appear in $\delta_s$ are $\underline{a}_0,\ldots,\underline{a}_s$. 

At stage $0$, let $\delta_0$ be $c_0 = c_0$. 

At stage $s = 4t + 1$, we try to diagonalize against a $\varphi_e$ for $e \leq t$. Search for an $e \leq t$  and an $i < s + 5$ such that $\varphi_{e,t}(\imath(a_i)) = c_j$ and (where $\bar{c} = (c_0,c_1,\ldots)$ is the sequence of constants from $C$ that appear in $\psi_{s}$): \[ T \nvdash \forall \bar{x} (\psi_{s}[\bar{x}/\bar{c}] \Rightarrow \underline{a}_i = x_j). \] By $\psi_{s}[\bar{x}/\bar{c}]$, we mean that the variables $\bar{x} = (x_0,x_1,\ldots)$ have been substituted for the constants $\bar{c}=(c_0,c_1,\ldots)$. This is a bounded search since $T$ is decidable and we only have to search through finitely many $\underline{a}_i$. If such an $e$ exists, choose the least $e$ such that we have not yet diagonalized against $\varphi_e$. Then set $\delta_s$ to be the formula $\underline{a}_i \neq c_j$ for that $e$. If no such $e$ exists, set $\delta_s$ to be the formula $c_0 = c_0$.

At stages $s = 4t + 2$, $s = 4t + 3$, and $s = 4t + 4$, we act as in the standard method of constructing a computable prime model (e.g., Theorems 5.1 and 7.1 of Harizanov's survey \cite{Harizanov98}), as follows:

At stage $s = 4t + 2$, we add a Henkin witness for $\delta_t$. If $\delta_t$ is of the form $(\exists x) \varphi(x)$, then let $c_i$ be a constant which does not appear in $\psi_s$ and let $\delta_s$ be $\varphi(c_i)$. Otherwise, set $\delta_s$ to be the formula $c_0 = c_0$.

At stage $s = 4t + 3$, we satisfy the completeness requirement for the sentence $\chi_t$ from some fixed listing $(\chi_t)_{t \in \omega}$ of the sentences in the language $\mathcal{L}_{\mathcal{F} \cup C}$. Let $\bar{c}$ be the constants from $C$ which appear in $\psi_s$ and $\chi_t$. Check whether
\[ T \vdash \forall \bar{x} (\psi_s \Rightarrow \chi_t)[\bar{x}/\bar{c}]. \]
If this is the case, let $\delta_s$ be $\chi_t$. Otherwise, let $\delta_s$ be $\neg \chi_t$.

At stage $s = 4t + 4$, we omit the type of an element transcendental over $\mc{F}$. We will have $c_t$ satisfy some polynomial over $\mathcal{F}$. Let $\bar{c}$ be the constants from $C$ which appear in $\psi_s$, except for $c_t$. Search for a polynomial $p(x) \in \mathcal{F}[\overline{X}]$ such that
\[ T \nvdash \forall x \forall \bar{z} (\psi_s[x\bar{z}/c_t\bar{c}] \Rightarrow p(x) \neq 0). \]
Set $\delta_s$ to be the formula $p(c_t) = 0$. Some such polynomial $p$ must exist as the type of a transcendental over $\mathcal{F}$ is a non-principal type.

\medskip{}

\noindent \emph{Verification.} By the standard Henkin construction arguments, we get a decidable prime model $\mc{K}$ whose domain consists of equivalence classes from $C$. We get a computable embedding $\alpha$ of $\mc{F}$ into $\mc{K}$ by mapping $a \in F$ to the element of $\mc{K}$ labeled by the symbol $\underline{a}$. Then $\alpha$ extends to an embedding $\beta$ of $\overline{\mc{F}}$ into $\mc{K}$, which we may represent as a computable map $\varphi_e \colon \overline{\mc{F}} \to C$ (by, say, choosing $\varphi_e(a)$ to be the least element of $C$ in the equivalence class of $\beta(a)$, which we can do computably since the equivalence classes are computable). There is a stage $s_0$ after which we never diagonalize against an $e' < e$. We never diagonalize against $e$.

\begin{claimstar}
Let $b \in \overline{\mc{F}}$ and $t$ be a stage such that $\varphi_{e,t}(b) \downarrow = c_j$ for some $j \in \omega$. Let $s = 4t + 1$. Then $b \in \imath(\mc{F})$ if and only if
there is some $i$ such that \begin{equation} T \vdash \forall \bar{x} (\psi_{s}[\bar{x}/\bar{c}] \Rightarrow \underline{a}_i = x_j).\tag{$*$}\end{equation}
\end{claimstar}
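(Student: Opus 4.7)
The plan is to prove the two directions separately, with the harder work falling on the $(\Rightarrow)$ direction, where I will exploit that the construction never diagonalizes against $\varphi_e$.

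For the $(\Leftarrow)$ direction, suppose $i$ witnesses $T \vdash \forall \bar{x}(\psi_s[\bar{x}/\bar{c}] \Rightarrow \underline{a}_i = x_j)$. The Henkin-style construction makes $\mc{K}$ a model of $T$ that also realizes the partial diagram, so $\mc{K}$ satisfies $\psi_s$ under the natural interpretation of the constants $\bar{c}$; applying the provable implication, $\mc{K} \models \underline{a}_i = c_j$, i.e., $\alpha(a_i)$ and $c_j^{\mc{K}}$ coincide in $\mc{K}$. Because $\varphi_e$ represents $\beta$ via the ``least element of equivalence class'' convention, $\varphi_e(b) = c_j$ means $\beta(b) = c_j^{\mc{K}}$. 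Combining these with $\alpha = \beta \circ \imath$ yields $\beta(\imath(a_i)) = \alpha(a_i) = c_j^{\mc{K}} = \beta(b)$, so injectivity of $\beta$ forces $b = \imath(a_i) \in \imath(\mc{F})$.

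For the $(\Rightarrow)$ direction, assume $b = \imath(a_i)$. Without loss of generality, enlarge $t$ (which preserves $\varphi_{e,t}(b) = c_j$) so that $e \leq t$, $i < s+5$, and $s = 4t+1 > s_0$. Suppose for contradiction that $T \nvdash \forall \bar{x}(\psi_s[\bar{x}/\bar{c}] \Rightarrow \underline{a}_i = x_j)$. Then the pair $(e, i)$ meets every condition of the search in the stage $s$ diagonalization step, so the construction picks the least index $e'$ meeting those conditions and not yet diagonalized against. I claim this least $e'$ is exactly $e$. On the one hand, any eligible $e'' < e$ would force a diagonalization against an index below $e$ at stage $s > s_0$, contradicting the defining property of $s_0$. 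On the other hand, $e$ itself remains eligible because $\varphi_e$ is never diagonalized against (it represents a genuine $\imath$-extension of $\alpha$). Hence the construction diagonalizes against $\varphi_e$ at stage $s$, the desired contradiction.

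The main obstacle is the bookkeeping in the $(\Rightarrow)$ direction: pinning down why, at a stage $s > s_0$, the minimum eligible index must be $e$ and not something smaller, and why $e$ itself is still available at that stage. Both points are subtle but rest only on the definition of $s_0$ and on the standing assumption that $\varphi_e$ codes a genuine extension. The $(\Leftarrow)$ direction is essentially semantic, being a direct unwinding of what it means for $\mc{K}$ to model $T$ together with the partial diagram $\psi_s$.
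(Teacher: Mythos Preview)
Your $(\Leftarrow)$ direction is correct and matches the paper.

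In the $(\Rightarrow)$ direction, the step ``without loss of generality, enlarge $t$ so that $i < s+5$'' is a genuine gap. The claim is stated for a \emph{given} stage $t$, and $(*)$ refers to $\psi_s$ for that particular $s = 4t+1$. If you replace $t$ by some larger $t'$, what your argument actually establishes is that $(*)$ holds with $\psi_{s'}$ (for $s' = 4t'+1$) in place of $\psi_s$. But $\psi_{s'}$ is a strictly stronger hypothesis than $\psi_s$, so provability of $\forall\bar{x}(\psi_{s'}[\bar{x}/\bar{c}] \Rightarrow \underline{a}_i = x_j)$ does \emph{not} imply provability of $\forall\bar{x}(\psi_s[\bar{x}/\bar{c}] \Rightarrow \underline{a}_i = x_j)$. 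Thus you have only shown the claim for $t$ large enough that $i < s+5$; the decision procedure following the claim cannot arrange this, since it does not know $i$ in advance.

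The paper does not enlarge $t$. It splits into cases on whether $i < s+5$. When $i < s+5$, the argument is essentially yours. When $i \geq s+5$, the paper passes to the \emph{first} later stage $s'$ of the form $4t'+1$ with $i < s'+5$; minimality of $s'$ forces $i > s'$, so the constant symbol $\underline{a}_i$ cannot yet occur in $\psi_{s'}$, and from this the paper argues directly that the non-provability condition holds at stage $s'$. Hence the pair $(e,i)$ is eligible at stage $s'$, so the construction diagonalizes against $e$ there, which is the contradiction. The substantive content your ``WLOG'' skips is exactly this fresh argument that the non-provability condition is met at the later stage; it does not persist automatically from stage $s$.
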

\begin{proof}
Given ($*$), in $\mc{K}$ the constant symbol $a_i$ is interpreted as the equivalence class of $c_j$. Thus $\alpha$ maps $a_i$ to the equivalence class of $c_j$. Since $\beta$ extends $\alpha$ and is one-to-one, $\imath(a_i) = b$.

On the other hand, suppose that $b \in \imath(F)$, say $b = a_i$, and suppose to the contrary that ($*$) does not hold. We have two cases. First, if $i < s + 5$, then we set $\delta_s$ to be the formula $a_i \neq c_j$. Then $\alpha(a_i) \neq c_j = \varphi_e(\imath(a_i))$, which is a contradiction. Second, if $i \geq s + 5$, then let $s' > s$ be the first stage of the form $s' = 4t' + 1$ with $i < s' + 5$. We have $i > s'$ (as if $i \leq s'$ we could have chosen $s' - 4$). Since the only constant symbols from $\mc{F}$ that appear in $\psi_{s'}$ are $\underline{a}_0,\ldots,\underline{a}_{s'}$, and $i > s'$, $\underline{a}_i$ does not appear in $\psi_s$. Then we have
\[T \nvdash \forall \bar{x} (\psi_{s'}[\bar{x}/\bar{c}] \Rightarrow \underline{a}_i = x_j).\]
We set $\delta_{s'}$ to be the formula $\underline{a}_i \neq c_j$ which again yields a contradiction. Hence ($*$) holds.
\end{proof}

The claim gives us a decision procedure for $\imath(F) \subseteq \overline{\mc{F}}$. At any stage $s$, there are only finitely many constants $c \in C$ mentioned in $\psi_s$, and hence only finitely many $a_i$ such that we might possibly have ($*$). So given $b \in \overline{\mc{F}}$, compute $s = 4t+1 \geq s_0$ and $j$ such that $\varphi_{e,t}(b) \downarrow = c_j$, and then check ($*$) for the finitely many possible $a_i$ to decide whether $b \in \imath(\mc{F})$.
\end{proof}

It was important in Theorem \ref{thm:1} that we allow the field $\mc{K}$ to vary. This is because if $\mc{F}$ is a field of infinite transcendence degree, there may be computable algebraically closed fields of infinite transcendence degree into which $\mc{F}$ does not effectively embed. For example, if $\mc{F}$ does not have a dependence algorithm but $\mc{K}$ does, then there is no computable embedding of $\mc{F}$ into $\mc{K}$. If we restrict to the case where $\mc{F}$ is an algebraic field, then $\mc{F}$ has a computable embedding into every computable algebraically closed field $\mc{K}$. In this particular case we get the following corollary, which we use in \S 4, where the field $\mc{K}$ is fixed:

\begin{corollary}\label{cor:algebraic}
Let $\mc{F}$ be a computable algebraic field and $\imath \colon \mc{F} \to \overline{\mc{F}}$ a computable embedding of $\mc{F}$ into a computable presentation of its algebraic closure. Let $\mc{K}$ be a computable algebraically closed field. Then the following are equivalent:
\begin{enumerate}
	\item $\mc{F}$ has a splitting algorithm,
	\item There is a Turing functional $\Phi$ which takes an embedding $\alpha \colon \mc{F} \to \mc{K}$ to an embedding $\Phi^\alpha$ of $\overline{\mc{F}}$ into $\mc{K}$ extending $\alpha$,
	\item Every computable embedding of $\mc{F}$ into $\mc{K}$ $\imath$-extends to a computable embedding of $\overline{\mc{F}}$ into $\mc{K}$.
\end{enumerate}
\end{corollary}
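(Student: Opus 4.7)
The implications $(1)\Rightarrow(2)\Rightarrow(3)$ are routine: for $(1)\Rightarrow(2)$, the Turing functional produced by Theorem~\ref{thm:splitting-implies-turing-functional} already works uniformly in the algebraically closed target, so specializing the oracle slot for $\mc{K}$ to the fixed $\mc{K}$ here yields the desired $\Phi$; and $(2)\Rightarrow(3)$ is immediate, by applying $\Phi$ to any given computable $\alpha$.

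The substantive direction is $(3)\Rightarrow(1)$, which I would prove by adapting the effective Henkin-style diagonalization used in the proof of Theorem~\ref{thm:1}\,$(3)\Rightarrow(1)$. The crucial difference is that $\mc{K}$ is now given rather than being constructed as a decidable prime model: instead of building $\mc{K}$ with Henkin witnesses, I construct a computable embedding $\alpha\colon\mc{F}\to\mc{K}$ into the fixed $\mc{K}$, stage by stage, committing values $\alpha(a_0),\alpha(a_1),\dots$ slowly enough to leave room for diagonalization. The role played in the original argument by decidability of the theory $T = \mathrm{ACF}\cup\mathrm{diag}(\mc{F})$ is taken over here by decidability of the atomic theory of $\mc{K}$ with parameters, which holds by quantifier elimination for ACF applied to the computable field $\mc{K}$. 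At stage $s=4t+1$, to defeat the least $\varphi_e$ ($e\leq t$) not yet diagonalized against, I search for $i<s+5$ with $\varphi_{e,t}(\imath(a_i))\downarrow = y$ such that, consistently with the already-committed portion of $\alpha$, some $y'\neq y$ in $\mc{K}$ can be chosen for $\alpha(a_i)$, and if found I set $\alpha(a_i):=y'$. Because $\mc{F}$ is algebraic and $\mc{K}$ is algebraically closed, whenever $a_i$ is not yet forced by prior commitments, such a $y'$ is simply another root in $\mc{K}$ of the $\alpha$-image of the minimal polynomial of $a_i$ over the committed subfield, and it can be located computably. Finally, if hypothesis~(3) supplies a computable $\varphi_e$ extending $\alpha$ which we fail to defeat, the bounded-range search in the construction converts directly into a decision procedure for $\imath(\mc{F})\subseteq\overline{\mc{F}}$, exactly as in the verification of Theorem~\ref{thm:1}, and hence into a splitting algorithm for $\mc{F}$.

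The main obstacle is to show that the diagonalization has enough room to act inside the fixed $\mc{K}$: that the analog of the Henkin-theoretic condition ``$T\nvdash (\psi_s\Rightarrow \underline{a}_i=x_j)$'' is still decidable, and that whenever it holds, an alternative root is actually realized in $\mc{K}$. The first point is handled by the decidable quantifier-free theory of $\mc{K}$ with parameters; the second uses algebraic closedness of $\mc{K}$ to guarantee that every root of the relevant (separable or higher-degree) minimal polynomial is present in $\mc{K}$, so an alternative $y'$ can be found by searching through its finitely many roots. When no alternative exists, the image $\alpha(a_i)$ is genuinely forced by the committed subfield, and the analog of claim~($*$) in the verification of Theorem~\ref{thm:1} goes through unchanged.
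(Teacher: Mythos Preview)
Your approach is correct but takes a genuinely different route from the paper's. You propose to redo the Henkin-style diagonalization of Theorem~\ref{thm:1} directly inside the fixed $\mc{K}$, building $\alpha$ stage by stage and using decidability of the elementary diagram of $\mc{K}$ (via quantifier elimination for ACF) in place of decidability of the abstract theory $T$. This works: since every finitely generated subfield of $\mc{F}$ is a finite extension of the perfect field $\mathbb{F}_p$, it carries a splitting algorithm, so one can compute minimal polynomials over the committed portion and always find an alternative root in $\mc{K}$ whenever $a_i$ is not already determined.

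The paper, by contrast, does not reopen the diagonalization at all. It simply observes that for an \emph{algebraic} field, condition~(3) with a single fixed $\mc{K}$ already implies the full computable extendability of embeddings property for arbitrary targets $\mc{L}$: given any computable $\alpha\colon\mc{F}\to\mc{L}$, one enumerates the algebraic closure of the prime field inside $\mc{L}$ (which contains $\alpha(\mc{F})$), computes an embedding $\jmath\colon\mc{L}\to\mc{K}$, applies~(3) to $\jmath\circ\alpha$ to get $\beta^*\colon\overline{\mc{F}}\to\mc{K}$, and then pulls $\beta^*$ back through $\jmath$ to obtain the required $\beta\colon\overline{\mc{F}}\to\mc{L}$. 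Theorem~\ref{thm:1} then gives~(1) immediately. This transport argument is a few lines long and avoids all the bookkeeping of rerunning the diagonalization; what it buys is that the hard work stays encapsulated in Theorem~\ref{thm:1}. Your approach, while heavier, has the minor virtue of being self-contained and of showing concretely how the algebraicity hypothesis replaces the freedom one had in the Henkin construction to choose the target.
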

\begin{proof}
By Theorem \ref{thm:1}, it suffices to show that (3) in the statement implies that $\mc{F}$ has the computable extendability property with respect to $\iota \colon \mc{F} \to \overline{\mc{F}}$. Let $\alpha \colon \mc{F} \to \mc{L}$ be a computable embedding of $\mc{F}$ into a computable algebraically closed field $\mc{L}$. We can enumerate, in $\mc{L}$, the algebraic closure of the prime field and this contains the image $\alpha(\mc{F})$ of $\mc{F}$. So we may assume that $\mc{L}$ is the algebraic closure of its prime field.

We can compute an embedding $\jmath\colon \mc{L} \to \mc{K}$ and let $\alpha^* \colon \mc{F} \to \mc{K}$ be $\jmath \circ \alpha$. By (3), there is an embedding $\beta^* \colon \overline{\mc{F}} \to \mc{K}$ which $\imath$-extends $\beta$.
\[\xymatrix{ & \mathcal{K}\\
 &  & \mathcal{L}\ar[ul]_{\jmath}\\
\overline{\mathcal{F}}\ar@{-->}[uur]^{\beta^{*}}\ar@{-->}[urr]_(.25){\beta}|\hole\\
 & F\ar[ul]^{\imath}\ar[uur]_{\alpha}\ar[uuu]_(.75){\alpha^{*}}
}\]
Since $\overline{\mc{F}}$ and $\overline{\mc{L}}$ are both algebraic closures of the prime field, the image of $\beta^*$ is the same as the image of $\jmath$. So there is an embedding $\beta \colon \overline{\mc{F}} \to \mc{L}$ such that $\jmath \circ \beta = \beta^*$. Then $\beta^*$ $\iota$-extends $\alpha$.
\end{proof}

\section{Extending Automorphisms of Normal Extensions of the Prime Field}\label{Automorphisms}

\subsection{Strong extendability of automorphisms property}

In the setting of automorphisms, it is natural to look at normal algebraic extensions of the prime field (see Proposition \ref{fact:rigid}).
When $\mc{F}$ is such an extension, we get the following corollary of Theorem~\ref{thm:1}, with two strengthenings of the computable extendability of automorphisms property.
(We denote the prime field by $\mathbb{F}_p$ even in the case of characteristic $p = 0$.)

\begin{corollary}\label{thm:auto}
Let $\mc{F}$ be a computable normal algebraic extension of the prime field and $\imath: \mc{F} \to \overline{\mc{F}}$ an embedding of $\mc{F}$ into a computable presentation of its algebraic closure. The following are equivalent:
\begin{enumerate}
	\item $\mc{F}$ has a splitting algorithm.
	\item For every computable automorphism $\alpha: \mc{F} \rightarrow \mc{F}$ of $\mc{F}$, there is a computable automorphism $\beta : \overline{\mc{F}} \rightarrow \overline{\mc{F}}$ which $\imath$-extends $\alpha$.
	\item There is a uniform procedure which, given any computable automorphism $\alpha: \mc{F} \rightarrow \mc{F}$ of $\mc{F}$, outputs a computable automorphism $\beta : \overline{\mc{F}} \rightarrow \overline{\mc{F}}$ which $\imath$-extends $\alpha$.
\end{enumerate}
\end{corollary}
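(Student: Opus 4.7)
My plan is to apply Corollary~\ref{cor:algebraic} with $\mc{K} = \overline{\mc{F}}$. The bridge between embeddings into $\overline{\mc{F}}$ (what Corollary~\ref{cor:algebraic} is about) and automorphisms of $\overline{\mc{F}}$ (what this corollary is about) rests on the following normality observation: if $\mc{F}$ is normal over $\mathbb{F}_p$, then any field embedding $\alpha : \mc{F} \to \overline{\mc{F}}$ satisfies $\alpha(\mc{F}) = \imath(\mc{F})$, because for each $a \in \mc{F}$ its minimal polynomial over $\mathbb{F}_p$ splits in $\imath(\mc{F})$ so $\alpha(a)$ lies in $\imath(\mc{F})$, and equality follows by applying the degree count on each finite subextension.

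For $(1) \Rightarrow (3)$, given (a code for) a computable automorphism $\alpha : \mc{F} \to \mc{F}$, I compose with $\imath$ to get a computable embedding $\imath \circ \alpha : \mc{F} \to \overline{\mc{F}}$ and feed it to the Turing functional $\Phi$ supplied by Corollary~\ref{cor:algebraic}. The output is a computable embedding $\beta : \overline{\mc{F}} \to \overline{\mc{F}}$ with $\beta \circ \imath = \imath \circ \alpha$. Its image is an algebraically closed subfield of $\overline{\mc{F}}$ containing $\imath(\alpha(\mc{F})) = \imath(\mc{F})$; since $\overline{\mc{F}}$ is algebraic over $\imath(\mc{F})$, $\beta$ is onto and hence an automorphism. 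The whole procedure is uniform in $\alpha$, giving (3); and $(3) \Rightarrow (2)$ is immediate.

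For $(2) \Rightarrow (1)$, it suffices by Corollary~\ref{cor:algebraic} to show that every computable embedding $\alpha : \mc{F} \to \overline{\mc{F}}$ $\imath$-extends to a computable embedding $\overline{\mc{F}} \to \overline{\mc{F}}$. Given such an $\alpha$, the normality observation gives $\alpha(\mc{F}) = \imath(\mc{F})$, so the map $\alpha^{*} := \imath^{-1} \circ \alpha : \mc{F} \to \mc{F}$ is a bijection. Crucially, $\alpha^{*}$ is computable: to evaluate $\alpha^{*}(a)$ we simply search $\mc{F}$ for the unique $b$ with $\imath(b) = \alpha(a)$, a search that halts because $\alpha(a) \in \imath(\mc{F})$. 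Now apply the hypothesis (2) to $\alpha^{*}$, obtaining a computable automorphism $\beta$ of $\overline{\mc{F}}$ with $\beta \circ \imath = \imath \circ \alpha^{*} = \alpha$. This $\beta$ is the desired $\imath$-extension of $\alpha$.

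The only real content beyond Corollary~\ref{cor:algebraic} is the normality observation plus the remark that $\imath^{-1} \circ \alpha$ is computable whenever $\alpha$ is, so I do not foresee any serious obstacle; Corollary~\ref{cor:algebraic} does the heavy lifting, and normality supplies exactly the ingredient needed to translate between embeddings and automorphisms.
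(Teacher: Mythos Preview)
Your proposal is correct and follows essentially the same route as the paper: both directions go through Corollary~\ref{cor:algebraic} with $\mc{K}=\overline{\mc{F}}$, using normality to identify embeddings $\mc{F}\to\overline{\mc{F}}$ with automorphisms of $\mc{F}$ via $\imath^{-1}\circ\alpha$. You supply a couple of details the paper leaves implicit (that the resulting $\beta$ is surjective, and that $\imath^{-1}\circ\alpha$ is computable by search), but the structure of the argument is the same.
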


\begin{proof}[Proof of Corollary \ref{thm:auto}]
Suppose that $\mc{F}$ is a computable normal algebraic field, and $\imath \colon \mc{F} \to \overline{\mc{F}}$ is a Rabin embedding. If $\mc{F}$ has a splitting algorithm, then by Corollary \ref{cor:algebraic}, any automorphism $\alpha$ of $\mc{F}$ extends to an automorphism of $\overline{\mc{F}}$ (taking $\mc{K} = \overline{\mc{F}}$ in the statement of the corollary). Indeed, $\imath \circ \alpha$ is a computable embedding of $\mc{F}$ into $\overline{\mc{F}}$ and hence there is an automorphism $\beta$ of $\overline{\mc{F}}$ which $\imath$-extends $\imath \circ \alpha$; that is, $\beta \circ \imath = \imath \circ \alpha$. So $\beta$ $\imath$-extends $\alpha$.

On the other hand, suppose that every automorphism of $\mc{F}$ extends to an automorphism of $\overline{\mc{F}}$. We will check (3) of Corollary \ref{cor:algebraic} with $\mc{K} = \overline{\mc{F}}$. Let $\alpha \colon \mc{F} \to \overline{\mc{F}}$ be an embedding. Since $\mc{F}$ is normal, $\alpha(\mc{F}) = \imath(\mc{F})$. Then $\imath^{-1} \circ \alpha \colon \mc{F} \to \mc{F}$ is an automorphism of $\mc{F}$, and hence extends to an automorphism $\beta$ of $\overline{\mc{F}}$. We have the following diagram:
\[\xymatrix{\overline{\mc{F}}\ar[r]^{\beta} & \overline{\mc{F}}\\
\imath(\mc{F})\ar[r]^{\alpha \circ \imath^{-1}}\ar[u]^{\subseteq} & \imath(\mc{F})\ar[u]_{\subseteq}\\
\mc{F}\ar[r]_{\imath^{-1}\circ\alpha}\ar[u]^{\imath}\ar[ur]_\alpha & \mc{F}\ar[u]_{\imath}
}\]
Note that $\beta \colon \overline{\mc{F}} \to \overline{\mc{F}}$ $\imath$-extends the embedding $\alpha$ of $\mc{F}$ into $\overline{\mc{F}}$.
\end{proof}

Note that we had to use Corollary \ref{cor:algebraic} rather than Theorem \ref{thm:1}, because we needed to fix $K = \overline{\mc{F}}$ instead of letting $\mc{K}$ be arbitrary.

In Corollary \ref{thm:auto} we asked for $\mc{F}$ to be a normal extension of $\mathbb{F}_p$. This is required in order to prove the theorem---we will construct an algebraic field which demonstrates that we need the hypothesis of normality in the preceding results. A rigid field automatically satisfies (2) of Corollary \ref{thm:auto}.

\begin{prop}\label{fact:rigid}
There is a rigid computable algebraic field $\mc{F}$ of characteristic zero with no splitting algorithm.
\end{prop}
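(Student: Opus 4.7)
The plan is to construct $\mc{F}$ as a subfield of the real algebraic numbers generated by real cube roots of primes indexed by a c.e., noncomputable set. Let $p_n$ denote the $n$-th prime, $\alpha_n = \sqrt[3]{p_n} \in \mathbb{R}$ the real cube root, fix a c.e., noncomputable $A \subseteq \omega$ with a computable enumeration $A = \bigcup_s A_s$, and set $\mc{F} = \mathbb{Q}(\alpha_n : n \in A) \subseteq \mathbb{R}$. The whole proof rests on one standard algebraic fact, which I will invoke several times: for any finite set of distinct indices $n_1, \ldots, n_k$, $[\mathbb{Q}(\alpha_{n_1}, \ldots, \alpha_{n_k}) : \mathbb{Q}] = 3^k$. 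This is a routine consequence of Kummer theory after adjoining a primitive cube root of unity, or of ramification considerations (the prime $p_{n_i}$ ramifies in $\mathbb{Q}(\alpha_{n_i})/\mathbb{Q}$ but is unramified in $\mathbb{Q}(\alpha_{n_j})/\mathbb{Q}$ for $j \ne i$). It then remains to verify (i) $\mc{F}$ is rigid, (ii) $\mc{F}$ admits a computable presentation, and (iii) the splitting set of $\mc{F}$ is not computable.

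Rigidity is immediate: any automorphism $\sigma$ of $\mc{F}$ must send each $\alpha_n$ ($n \in A$) to another root of $X^3 - p_n$ lying in $\mc{F} \subseteq \mathbb{R}$, and since $\alpha_n$ is the only real root, $\sigma$ fixes every generator and hence all of $\mc{F}$. For the computable presentation I would build $\mc{F}$ as an effective union $\bigcup_s \mc{F}_s$ with $\mc{F}_s = \mathbb{Q}(\alpha_n : n \in A_s)$: the key fact guarantees that when a new index $n$ enters $A_s$, $X^3 - p_n$ remains the minimal polynomial of $\alpha_n$ over $\mc{F}_{s-1}$, so $\mc{F}_s$ is obtained from $\mc{F}_{s-1}$ by adjoining a root of a \emph{known} irreducible separable polynomial. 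Iterating Theorem~\ref{Kronecker} then gives uniformly computable presentations and splitting algorithms for the $\mc{F}_s$ together with computable inclusion maps, and $\mc{F}$ is their computable direct limit.

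The heart of the argument is (iii), and is where I expect the main obstacle. Since $X^3 - p_n$ is cubic, it is reducible in $\mc{F}$ iff it has a root in $\mc{F}$ iff $\alpha_n \in \mc{F}$; the implication $n \in A \Rightarrow \alpha_n \in \mc{F}$ is by construction. The non-trivial direction, where genuine algebra is required, is the converse: if $n \notin A$ then $\alpha_n \notin \mc{F}$. Once more the key fact suffices: for any finite $S \subseteq A$ not containing $n$, $[\mathbb{Q}(\alpha_n, \{\alpha_m : m \in S\}) : \mathbb{Q}] = 3^{|S|+1}$ while $[\mathbb{Q}(\alpha_m : m \in S) : \mathbb{Q}] = 3^{|S|}$, so $\alpha_n \notin \mathbb{Q}(\alpha_m : m \in S)$; taking the union over such $S$ gives $\alpha_n \notin \mc{F}$. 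Consequently the splitting set of $\mc{F}$ Turing-computes $A$ and cannot itself be computable, completing the construction.
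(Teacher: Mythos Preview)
Your proposal is correct and follows essentially the same approach as the paper: build a subfield of $\mathbb{R}$ by adjoining real radicals indexed by a noncomputable c.e.\ set, use realness for rigidity, and a degree/linear-disjointness argument to show the splitting set computes the index set. The only cosmetic difference is that the paper adjoins $p_n$th roots of $2$ (varying the exponent) rather than cube roots of $p_n$ (varying the radicand), and takes the specific set $\varnothing'$; the structure of the argument is identical.
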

\begin{proof}
Let $p_0,p_1,\ldots$ list the primes greater than two. Let $\mc{F} = \mathbb{Q}(a_n : n \in \varnothing')$ where $a_n$ is the unique real $p_n$th root of $2$, and $\varnothing'$ is the Turing jump of the empty set. Since $\mc{F} \subseteq \mathbb{R}$, for each $n \in \varnothing'$, $a_n$ is the only $p_n$th root of $2$ in $\mc{F}$. So every automorphism of $\mc{F}$ fixes the $a_n$, and hence fixes $\mc{F}$. Hence $\mc{F}$ is rigid.

We can use an enumeration of $\varnothing'$ to give a computable presentation of $\mc{F}$: $\mc{F}$ can be embedded as a c.e.\ subfield of $\bar{Q}$ and from this we get a computable presentation of $\mc{F}$.

We need to argue that for $n \notin \varnothing'$, $a_n \notin \mc{F}$. We claim that if $n \notin I$, $a_n \notin \mathbb{Q}(a_{i} : i \in I)$. Suppose not; then we can find a finite set $I$ and $n \notin I$ such that $a_n \in \mathbb{Q}(a_{i} : i \in I)$ and for each $j \in I$, $a_j \notin \mathbb{Q}(a_i : i \in I \setminus \{j\})$. Then $\mathbb{Q}(a_i : i \in I)$ is a finite extension of $\mathbb{Q}$ of degree $d = \prod_{i \in I} p_i$. Since $p_n$ does not divide $d$, $\mathbb{Q}(a_n)$ is not a subfield of $\mathbb{Q}(a_i : i \in I)$. This contradicts the assumption that $a_n \in \mathbb{Q}(a_{i} : i \in I)$. Thus for $n \notin \varnothing'$, $a_n \notin \mc{F}$.

No computable presentation of $\mc{F}$ can have a splitting algorithm, as a splitting algorithm would allow us to compute $\varnothing'$.
\end{proof}

\subsection{Computable extendability of automorphisms property}

In Corollary \ref{thm:auto}, we fixed an embedding $\imath \colon \mc{F} \to \overline{\mc{F}}$ and considered only $\imath$-extensions. Now we allow $\imath$ to vary. Note that while every computable presentation of $\overline{\mc{F}}$ is isomorphic, there may be different computable embeddings $\imath: \mc{F} \to \overline{\mc{F}}$ which are not equivalent up to a computable automorphism of $\overline{\mc{F}}$. By Corollary \ref{cor:algebraic}, if $\mc{F}$ is an algebraic field with no splitting algorithm, there are embeddings $\imath$ and $\jmath$ of $\mc{F}$ into $\overline{\mc{F}}$ such that there is no computable automorphism $\sigma$ of $\overline{\mc{F}}$ with $\sigma \circ \imath = \jmath$.

In the introduction, we said that $\mc{F}$ had the \textit{computable extendability of automorphisms property} if each computable automorphism of $\mc{F}$ had such an extension to $\overline{\mc{F}}$. Recall that our interest in the computable extendability of automorphisms property comes from its role in an analogue of Rabin's theorem in the context of difference closed fields; see Theorem \ref{thm:embedding-into-acfa} which we will prove in the following section.

We can already produce examples of fields without the computable extendability of automorphisms property. We use the fact that every noncomputable c.e.\ set is the union of two disjoint, computably inseparable c.e.\ subsets. This is a theorem of Yates, who saw that it followed from a construction of Friedberg; the theorem was subsequently published by Cleave \cite{Cleave70} in 1970.
 
\begin{prop}\label{cor:forking-zero}
For each noncomputable c.e.\ set $C$, the field $\mc{F} = \mathbb{Q}(\sqrt{p_n} : n \in C)$
(with $p_n$ the $n$th prime) does not have the computable extendability of automorphisms property.
\end{prop}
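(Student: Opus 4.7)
The plan is to use the Yates/Friedberg decomposition to construct a computable automorphism of some presentation of $\mc{F}$, and then rule out any computable extension by a coding argument. First, write $C = A \sqcup B$ with $A, B$ disjoint c.e.\ sets that are computably inseparable. Since distinct primes give linearly disjoint quadratic extensions of $\mathbb{Q}$, the Galois group $\gal(\mc{F}/\mathbb{Q})$ is canonically $\prod_{n \in C} \mathbb{Z}/2\mathbb{Z}$, so there is a well-defined $\alpha \in \gal(\mc{F}/\mathbb{Q})$ with $\alpha(\sqrt{p_n}) = -\sqrt{p_n}$ for $n \in A$ and $\alpha(\sqrt{p_n}) = \sqrt{p_n}$ for $n \in B$.

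To present $\mc{F}$ computably with $\alpha$ computable, I would build $\mc{F}$ and the graph of $\alpha$ together by a stagewise enumeration. At stage $s$, work inside the finite extension $\mc{F}_s = \mathbb{Q}(\sqrt{p_n} : n \in A_s \cup B_s)$, which has a splitting algorithm by Theorem \ref{Kronecker}, and assign fresh natural-number names to the new elements of $\mc{F}_s \setminus \mc{F}_{s-1}$. Whenever $n$ enters $A$ (resp.\ $B$) at some stage, permanently record $\alpha(\sqrt{p_n}) = -\sqrt{p_n}$ (resp.\ $\sqrt{p_n}$); because the chosen sign is known at the moment each generator appears, $\alpha$ extends uniquely to a computable automorphism of each $\mc{F}_s$, and hence of $\mc{F}$ in the limit.

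Now suppose toward a contradiction that some Rabin embedding $\imath \colon \mc{F} \to \overline{\mc{F}}$ and some computable automorphism $\beta \colon \overline{\mc{F}} \to \overline{\mc{F}}$ satisfy $\beta \circ \imath = \imath \circ \alpha$. Uniformly in $n$, compute an $r_n \in \overline{\mc{F}}$ with $r_n^2 = p_n$ by searching the computable field $\overline{\mc{F}}$; then $\beta(r_n) \in \{r_n, -r_n\}$ and we can decide which holds computably. Since $\beta$ is a ring homomorphism, a short case split on whether $\imath(\sqrt{p_n}) = r_n$ or $\imath(\sqrt{p_n}) = -r_n$ shows, independently of the choice of $\imath$, that $n \in A \Rightarrow \beta(r_n) = -r_n$ and $n \in B \Rightarrow \beta(r_n) = r_n$. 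Hence $\{n : \beta(r_n) = r_n\}$ is a computable set separating $B$ from $A$, contradicting computable inseparability.

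The trickiest ingredient is the stagewise construction of the presentation: one must verify that the naming scheme yields a genuinely computable field with $\alpha$ computable, rather than merely $\Delta^0_2$. The rest is a routine calculation, and it is worth emphasizing that the argument rules out extensions via \emph{every} Rabin embedding $\imath$ at once, which is exactly the form required by Definition \ref{def:3}.
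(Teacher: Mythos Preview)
Your proof is correct and follows essentially the same approach as the paper's: decompose $C$ into computably inseparable c.e.\ sets $A$ and $B$, define $\alpha$ via its sign on each $\sqrt{p_n}$ according to membership in $A$ or $B$, and observe that any computable $\beta$ $\imath$-extending $\alpha$ would compute a separating set from the action of $\beta$ on a chosen square root $r_n$ of $p_n$ in $\overline{\mc{F}}$. You are more explicit than the paper about building the presentation so that $\alpha$ is genuinely computable and about why the separation argument is independent of the choice of $\imath$ (and of which root $r_n$ you pick), but these are expository refinements of the same strategy, with only the roles of $A$ and $B$ interchanged.
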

\begin{proof}
Let $A$ and $B$ be disjoint computably inseparable c.e.\ sets with $A \cup B = C$. Recalling the classic result (originally due to Besicovitch \cite{Besicovitch40}) that if $r$ and $q_1,\ldots,q_\ell$ are distinct primes, then $\sqrt{r} \notin \mathbb{Q}(\sqrt{q_1},\ldots,\sqrt{q_\ell})$, we define an automorphism $\alpha$ of $\mc{F}$ by letting $\alpha$ fix the two square roots of $p_n$ if $n\in A$, but interchange them if $n\in B$. This $\alpha$ is computable, but if $\imath$ is any Rabin embedding of $\mc{F}$
into some presentation $\overline{\mc{F}}$ of its algebraic closure and $\beta$ is an automorphism of
$\overline{\mc{F}}$ $\imath$-extending $\alpha$, then $\{n\in\omega~:~\beta(\sqrt{p_n})=\sqrt{p_n}\}$ is a $\beta$-computable separation of $A$ from $B$.
\end{proof}

However, we would like a more complete description of which fields have, and which do not have, the  computable extendability of automorphisms property. We do not obtain a complete description, but we give a characterization in terms of a splitting algorithm for many fields. The idea will be to isolate certain normal extensions $\mc{K} / \mathbb{F}_p$ whose subfield structure behaves sufficiently like the field $\mc{F}$ in Proposition \ref{cor:forking-zero} above, allowing us to make a particular diagonalization argument. In diagonalizing against the computable extendability of automorphisms property, we do not have access to a Rabin embedding $\iota$ (and it does not seem possible to diagonalize against all possible computable Rabin embeddings). So rather than defining $\alpha$ to diagonalize against $\beta$ using the image under a fixed $\iota$, we must define $\alpha$ to diagonalize against \textit{all possible} images under all possible $\iota$. In Proposition \ref{cor:forking-zero}, we do this using the fact that if $\alpha$ fixes the square roots of $p_n$, then so does $\beta$ for any $\iota$-extension of $\alpha$ under any Rabin embedding $\iota$, and similarly if $\alpha$ interchanges the roots of $p_n$. In general, we want to have some finite subfield $\mc{E}$ of $\mc{F}$, and to define $\alpha$ on $\mc{E}$ so that there is no embedding $\iota$ under which $\beta$ $\iota$-extends $\alpha$. We may have already defined $\alpha$ on some subfield of $\mc{E}$, so we do not have a completely free choice of $\alpha$. There are some fields where this argument will always work successfully: those with the non-covering property from Definition \ref{defn:NCP}. In many other fields, we can find some appropriate subfield which satisfies the required condition, allowing the argument to go through---see Examples \ref{ex:one} and \ref{ex:two}.

Using Galois theory, there is also a field-theoretic characterization of the field extensions whose Galois group has the non-covering property, and it is this characterization that we will use in the proof of Theorem \ref{thm:CEAP} (though, in applying the theorem, it will usually be easier to use the group-theoretic characterization). In what follows, it will be helpful to use the language of difference fields to talk about field automorphisms.

\begin{rem}
Let $\mc{F}/\mc{E}$ be a field extension, $\alpha$ an automorphism of $\mc{E}$, and $\beta$ an automorphism of $\mc{F}$. Let $\imath \colon \mc{E} \to \mc{F}$ be a field embedding of $\mc{E}$ into $\mc{F}$. Then $\beta$ $\imath$-extends $\alpha$ if and only if $\imath$ is an embedding of $(\mc{E},\alpha)$ into $(\mc{F},\beta)$ as difference fields.
\end{rem}
\begin{proof}
Both are equivalent to having $\beta \circ \imath = \imath \circ \alpha$.
\end{proof}

\begin{lem}\label{forking-equivalence}
Let $\mc{E} / \mc{F}$ be a separable normal extension. The following are equivalent:
\begin{enumerate}
	\item $\gal(\mc{E}/\mc{F})$ has the non-covering property.
	\item For all finite normal subextensions $\mc{K}_1 / \mc{F}$ and $\mc{K}_2 / \mc{F}$ with $\mc{K}_2 \nsubseteq \mc{K}_1$, and every pair of automorphisms $\sigma$ of $\mc{K}_1$ and $\tau$ of $\mc{K}_2$ fixing $\mc{F}$, there is an automorphism $\alpha$ of $\mc{E}$ extending $\sigma$ and incompatible with $\tau$ (i.e., $(\mc{K}_2,\tau)$ does not embed into $(\mc{E},\alpha)$ as a difference field).
\end{enumerate}
\end{lem}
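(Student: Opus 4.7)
The plan is to use the Galois correspondence to convert condition (2) into a purely group-theoretic statement and then compare it directly with NCP. Setting $G = \gal(\mc{E}/\mc{F})$ and $N_i = \gal(\mc{E}/\mc{K}_i)$, each $N_i$ is a finite-index normal subgroup of $G$, and $\mc{K}_2 \nsubseteq \mc{K}_1$ iff $N_1 \nsubseteq N_2$. The automorphisms $\sigma$ of $\mc{K}_1$ and $\tau$ of $\mc{K}_2$ correspond to cosets $gN_1$ and $g'N_2$ in $G$; extensions $\alpha \in G$ of $\sigma$ are exactly the elements of $gN_1$. Since $\mc{K}_2/\mc{F}$ is normal, every $\mc{F}$-embedding $\imath : \mc{K}_2 \to \mc{E}$ is the restriction of some $x \in G$, and a short computation shows that $\alpha \circ \imath = \imath \circ \tau$ iff $x^{-1}\alpha x \in g'N_2$. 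Thus (2) is equivalent to the following group-theoretic statement $(2')$: for all finite-index normal subgroups $N_1, N_2$ of $G$ with $N_1 \nsubseteq N_2$ and all $g, g' \in G$, there exists $\alpha \in gN_1$ with $x^{-1}\alpha x \notin g'N_2$ for every $x \in G$. The direction $(2') \Rightarrow (1)$ is then immediate: given $M \subsetneq N$ and $g$, specialize $(2')$ with $N_1 = N$, $N_2 = M$, and $g = g'$ (noting $N \nsubseteq M$ since $M \subsetneq N$), and read off the $h$ that NCP demands.

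For $(1) \Rightarrow (2')$, I would exploit two invariances of the set to be avoided, $U := \bigcup_{x \in G} x g' N_2 x^{-1}$: (a) $U$ is unchanged when $g'$ is replaced by any $G$-conjugate (reindex via $x \mapsto xy$), and (b) $U$ is a union of $N_2$-cosets, because $xg'N_2x^{-1} = xg'x^{-1}N_2$ by normality of $N_2$. Given $N_1, N_2, g, g'$, I split into two cases according as the cosets $gN_1N_2$ and $g'N_1N_2$ are conjugate in $G/(N_1N_2)$. If they are not, any $\alpha \in gN_1$ works, since every $G$-conjugate of $\alpha$ stays in some conjugate of $gN_1N_2$, which then misses $g'N_1N_2 \supseteq g'N_2$. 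If they are conjugate, invariance (a) lets me replace $g'$ by a suitable conjugate so that $g' \in gN_1N_2$. I then descend to $\bar G := G/N_2$---NCP for $G$ transfers to NCP for $\bar G$ by pulling finite-index normal subgroups back to $G$ and lifting coset representatives---and apply NCP in $\bar G$ with the strict inclusion $\{\bar 1\} \subsetneq \bar N := N_1 N_2/N_2$ (nontrivial precisely because $N_1 \nsubseteq N_2$) and coset representative $\bar{g'}$. This yields $\bar h \in \bar g \bar N = \bar{g'} \bar N$ no conjugate of which equals $\bar{g'}$ in $\bar G$. Lifting $\bar h$ to some $h \in gN_1N_2$, writing $h = g n_1 n_2$ with $n_i \in N_i$, and setting $\alpha := g n_1 \in gN_1$ gives $\bar\alpha = \bar h$; invariance (b) then transfers the avoidance property from $h$ to $\alpha$, finishing the argument.

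The main subtlety is the bookkeeping in the conjugate case: two independent reductions---conjugating $g'$ into $gN_1N_2$ and right-multiplying the lift of $\bar h$ by an element of $N_2$ to land inside $gN_1$---each preserve the avoidance property, thanks respectively to invariances (a) and (b). Once those reductions are made, everything else reduces to the Galois dictionary and a routine application of NCP inside the quotient $\bar G$.
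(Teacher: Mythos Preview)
Your proof is correct and shares the paper's overall shape---translate to groups via the Galois correspondence, case-split on a conjugacy condition, and invoke NCP in the nontrivial case---but the execution of $(1)\Rightarrow(2')$ differs. You first note that NCP passes to quotients, then work in $\bar G = G/N_2$ where the problem becomes NCP with the trivial subgroup as $M$; lifting back uses your invariance (b). The paper instead stays in $G$ throughout: it enlarges $N$ to $NM$ (this is your invariance (b) in disguise), then in the conjugate case finds $z$ with $zg_2z^{-1}\in g_1M$, applies NCP directly to $M\subsetneq N$ at the single coset $g_1M$, and finishes with an explicit computation transferring avoidance of $g_1M$ to avoidance of $g_2M$. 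Your case split (conjugacy in $G/(N_1N_2)$) is also coarser than the paper's (conjugacy in $G/M$), so more pairs fall into your Case~2, but this causes no trouble. Your route is conceptually tidier once the quotient lemma is isolated; the paper's avoids that extra lemma at the price of more hands-on coset manipulation.
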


The second point is related to the monadic and incompatible extensions of difference fields studied by Cohn~\cite{Cohn52}, Babbitt~\cite{Babbitt62}, and Evanovich~\cite{Evanovich73}.

\begin{proof}
We begin by showing (1)$\Rightarrow$(2). Let $\mc{K}_1$ and $\mc{K_2}$ be as in (2). Let $\sigma$ and $\tau$ be automorphisms of $\mc{K}_1$ and $\mc{K}_2$ respectively fixing $\mc{F}$. Let $G = \gal(\mc{E}/\mc{F})$. Let $M$ be the normal subgroup of automorphisms fixing $\mc{K}_2$, and $N$ the normal subgroup of automorphisms fixing $\mc{K}_1$. Since $\mc{K}_1$ and $\mc{K}_2$ are finite extensions, $M$ and $N$ are of finite index. We also have $N \nsubseteq M$. Let $g_1 \in G$ be an automorphism of $\mc{E}$ extending $\sigma$, and $g_2$ an automorphism of $\mc{E}$ extending $\tau$.

We will argue that there is an $h \in g_1 N$ such that for all $z \in G$, $z^{-1} h z \notin g_2 M$. Such an $h$ is an automorphism of $\mc{E}$ extending $\sigma$, and $g_2 M$ is the set of automorphisms of $\mc{E}$ extending $\tau$. Since, for all $x \in G$, $x^{-1} g_1 h x \notin g_2 M$, $(\mc{E},\alpha)$ is not isomorphic as a difference field to $(\mc{E},\beta)$ for any extension $\beta$ of $\tau$.

First, we argue that it suffices to assume $M \subsetneq N$. Suppose that there is $h' \in g_1 NM$ such that for all $z \in G$, $z^{-1} h z \notin g_2 M$. Then write $h' = g_1 n m$. Suppose for some $z \in G$ that $z^{-1} g_1 n z \in g_2 M$, say $z^{-1} g_1 n z = g_2 m'$ with $m' \in M$. Let $m'' \in M$ be such that $z^{-1} m z = m''$. Then
\[ z^{-1} g_1 n m z = g_2 m' m''^{-1} \in g_2 M. \]
This contradicts the choice of $h' = g_1 n m$. So for all $z \in G$, $z^{-1} g_1 n z \notin g_2 M$. Then $h = g_1 n \in g_1 N$ is the automorphism of $\mc{E}$ that we desire. So we may replace $N$ by $NM$.

Now we have two cases. First, suppose that there is no $z \in G$ such that $z^{-1} g_1 z \in g_2 M$. Then $h = g_1$ is as desired.

Second, suppose that for some $c \in M$ and $z \in G$, $z^{-1} g_1 z = g_2 c$. Then $g_1 = z g_2 c z^{-1} = z g_2 z^{-1} c'$ for some other $c' \in M$ since $M$ is a normal subgroup. So $z g_2 z^{-1} = g_1 m$, where $m = (c')^{-1}$. Using the fact that $G$ has the non-covering property, choose $h \in N$ such that for all $x \in G$, $x^{-1} g_1 h x \notin g_1 M$. We claim that for all $x \in G$, $x^{-1} g_1 h x \notin g_2 M$. Suppose to the contrary that there is $x \in G$ such that $x^{-1} g_1 h x \in g_2 M$. Since $x^{-1} g_1 h x \in g_2 M$ and $M$ is a normal subgroup, $g_1 h \in x g_2 x^{-1} M$. We have
\[ x g_2 x^{-1} = (x z^{-1}) z g_2 z^{-1} (x z^{-1})^{-1} = (x z^{-1}) g_1 m (x z^{-1})^{-1}. \]
Let $y = (x z^{-1})$. Since $m \in M$ is a normal subgroup, $y g_1 m y^{-1} = y g_1 y^{-1} m'$ for some other $m' \in M$. Thus $g_1 h \in y g_1 y^{-1} M$ and so $y^{-1} g_1 h y \in g_1 M$. This contradicts the choice of $h$. So for all $x \in G$, $x^{-1} g_1 h x \notin g_2 M$.

The direction (2)$\Rightarrow$(1) proceeds simply by the Galois correspondence. Fix finite index normal subgroups $M \subsetneq N$ of $G = \gal(\mc{E}/\mc{F})$ and $g \in G$. Let $\mc{K}_1$ and $\mc{K}_2$ be the fields fixed by $N$ and $M$ respectively; we have $\mc{K}_1 \subsetneq \mc{K}_2$. The $\sigma$ be the restriction of $g$ to $\mc{K}_1$ and $\tau$ its restriction to $\mc{K}_2$. There is an automorphism $\alpha$ of $\mc{E}$ extending $\sigma$ and not compatible with $\tau$. So $\alpha \in gN$ and for all extensions $\beta$ of $\tau$ to $\mc{E}$ (i.e., $\beta \in g M$), $(\mc{E},\alpha)$ is not isomorphic as a difference field to $(\mc{E},\beta)$. That is, for all $\gamma \in G$ and $\beta \in g M$, $\gamma \circ \alpha \neq \beta \circ \gamma$.
\end{proof}

We will restrict our attention to field extensions whose Galois group has the non-covering property, but we will allow the base field to be an extension of $\mathbb{F}_p$ with a splitting algorithm. The main theorem of this section is as follows. (We state it in a slightly more general form than it appears in the introduction.)

\begin{thm}\label{thm:CEAP}
Let $\mc{E}$ be a computable normal extension of $\mathbb{F}_p$ and let $\mc{F} \subseteq \mc{E}$ be a subfield of $\mc{E}$ with a splitting algorithm which is also a normal extension of $\mathbb{F}_p$. Suppose that $\gal(\mc{E}/\mc{F})$ has the non-covering property. The following are equivalent:
\begin{enumerate}
	\item $\mc{E}$ has a splitting algorithm,
	\item $\mc{E}$ has the computable extendability of automorphisms property,
	\item $\mc{E}$ has the uniform extendability of automorphisms property.
\end{enumerate}
\end{thm}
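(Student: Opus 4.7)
The plan is to handle the three implications in the order $(3) \Rightarrow (2)$, $(1) \Rightarrow (3)$, and $(2) \Rightarrow (1)$, with the last being the substantive step. The implication $(3) \Rightarrow (2)$ is immediate by ignoring the uniformity. For $(1) \Rightarrow (3)$, I would fix any Rabin embedding $\imath_0 \colon \mc{E} \to \overline{\mc{E}}$ and apply Corollary \ref{thm:auto} to $\mc{E}$ as a normal algebraic extension of $\mathbb{F}_p$, whose hypothesis is exactly our (1). The corollary supplies a uniform procedure sending each computable automorphism $\alpha$ of $\mc{E}$ to a computable automorphism $\beta$ of $\overline{\mc{E}}$ that $\imath_0$-extends $\alpha$; outputting the pair $(\imath_0, \beta)$ witnesses the uniform extendability of automorphisms property.

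The main content is $(2) \Rightarrow (1)$, argued contrapositively: assuming $\mc{E}$ has no splitting algorithm, I construct a computable automorphism $\alpha \colon \mc{E} \to \mc{E}$ with no computable $\imath$-extension to $\overline{\mc{E}}$ for any Rabin embedding $\imath$, contradicting (2). The construction proceeds in stages without injury: at stage $s$, $\alpha$ is committed on a finite normal subextension $\mc{L}_s/\mc{F}$ inside $\mc{E}$, and the chain $\mc{L}_0 \subseteq \mc{L}_1 \subseteq \cdots$ is forced to exhaust $\mc{E}$ by interleaving totality requirements. Each $\mc{L}_s$ is computably presentable because $\mc{F}$ has a splitting algorithm (Theorem \ref{Kronecker} combined with Lemma \ref{lem:sep-closure}). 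The construction meets requirements $R_{e,e'}$ indexed by pairs of indices of partial computable functions, where $R_{e,e'}$ asserts that $\varphi_{e'}$ is not a Rabin embedding or $\varphi_e$ is not a computable automorphism of $\overline{\mc{E}}$ which $\varphi_{e'}$-extends $\alpha$.

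To meet $R_{e,e'}$, I would wait until $\varphi_{e'}$ and $\varphi_e$ have converged on sufficiently many elements of some finite normal extension $\mc{K}_2/\mc{F}$ with $\mc{K}_2 \nsubseteq \mc{L}_s$, so that the composite $\varphi_{e'}^{-1} \circ \varphi_e \circ \varphi_{e'}$ determines a putative automorphism $\tau$ of $\mc{K}_2$ that any genuine $\imath$-extension would have to induce. Taking $\mc{K}_1 = \mc{L}_s$ and $\sigma = \alpha|_{\mc{L}_s}$, the field-theoretic form of the non-covering property (Lemma \ref{forking-equivalence}) produces an automorphism $\alpha^*$ of $\mc{E}$ extending $\sigma$ such that $(\mc{K}_2, \tau)$ does not embed into $(\mc{E}, \alpha^*)$ as a difference field. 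Committing $\alpha$ on an $\mc{L}_{s+1} \supseteq \mc{K}_1 \cup \mc{K}_2$ to agree with $\alpha^*$ permanently satisfies $R_{e,e'}$: if some Rabin embedding $\imath$ and computable $\beta = \varphi_e$ satisfied $\beta \circ \imath = \imath \circ \alpha$, then $\alpha|_{\mc{K}_2} = \tau$ would be forced, so the inclusion $\mc{K}_2 \hookrightarrow \mc{E}$ would be a difference-field embedding of $(\mc{K}_2, \tau)$ into $(\mc{E}, \alpha)$, contradicting incompatibility.

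The principal obstacle is arranging that such a $\mc{K}_2 \supsetneq \mc{L}_s$ is always available together with data determining $\tau$. The failure of a splitting algorithm for $\mc{E}$ supplies, above every $\mc{L}_s$, polynomials whose factorizations are not yet detected, generating genuinely new finite normal subextensions; synchronizing this with the partial computations of $\varphi_e$ and $\varphi_{e'}$ while simultaneously maintaining totality of $\alpha$ on $\mc{E}$ is the delicate bookkeeping at the heart of the proof. The non-covering hypothesis is precisely what makes the per-requirement diagonalization succeed uniformly across all pairs $(e, e')$, since without it the requisite incompatible $\alpha^*$ may not exist.
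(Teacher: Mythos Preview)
Your plan is correct in outline and uses the same engine as the paper---Lemma~\ref{forking-equivalence} to extend $\alpha$ on a finite normal piece so as to be incompatible with the action that a putative $\varphi_e$ forces on some $\mc{K}_2$---but you are working harder than necessary in two respects, and the paper's choices are worth noting.

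First, you diagonalize against pairs $(e,e')$, tracking a Rabin embedding $\varphi_{e'}$ in order to form $\tau=\varphi_{e'}^{-1}\circ\varphi_e\circ\varphi_{e'}$. The paper diagonalizes only against $\varphi_e$. The point is that $\mc{E}$ (and each $\mc{K}_2$) is normal over $\mathbb{F}_p$, so its image in $\overline{\mc{E}}$ is unique; one may fix any single Rabin embedding $\imath$ once and for all, identify $\mc{F}$ with $\imath(\mc{F})$, and read $\tau$ directly as the restriction of $\varphi_e$ to the unique copy of $\mc{K}_2$ inside $\overline{\mc{E}}$. Moreover, Lemma~\ref{forking-equivalence} produces an $\alpha^*$ such that $(\mc{K}_2,\tau)$ admits \emph{no} difference-field embedding into $(\mc{E},\alpha^*)$; this already rules out every Rabin embedding simultaneously, so the index $e'$ is redundant. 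Your version is not wrong, just over-parametrized.

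Second, and more substantively, the paper does not argue the contrapositive. It runs exactly your construction but \emph{assumes} (2), so it knows in advance that the diagonalization must fail against some genuine $\varphi_e$. The verification then converts that failure into a splitting algorithm: once all higher-priority requirements have settled, one shows that for any $x\in\overline{\mc{E}}$, waiting until $\varphi_e$ converges on all conjugates of $x$ over $\mc{F}$ and then testing membership in the current finite piece $\mc{F}(a_1,\dots,a_t)$ (which has a splitting algorithm by Kronecker) decides $x\in\imath(\mc{E})$. This is precisely the ``synchronization'' you flagged as the principal obstacle, but in the direct framing it becomes a feature rather than a difficulty: you never have to argue that a suitable $\mc{K}_2\nsubseteq\mc{L}_s$ materializes in time, because the \emph{absence} of such a $\mc{K}_2$ at every relevant stage is exactly the statement that $\imath(\mc{E})$ is decidable. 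To make this work cleanly the paper arranges a special enumeration $a_1,a_2,\dots$ of $\mc{E}$ in which each $\mc{F}(a_1,\dots,a_s)$ is normal over $\mathbb{F}_p$ and there is no intermediate normal extension between consecutive stages; this guarantees that whenever $a_{s+1}$ is genuinely new, the conjugates of any $a_i$ in the new layer generate the whole step, so the $\mc{K}_2$ used for diagonalization is automatically contained in $\mc{L}_{s+1}$.
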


Many applications of this theorem will have $\mc{F} = \mathbb{F}_p$, but the freedom to choose $\mc{F}$ will allow us to apply the theorem in situations where $\gal(\mc{E}/\mathbb{F}_p)$ does not have the non-covering property.  Producing an example where the theorem cannot be applied seems to be  a non-trivial task, and we do not know of any such examples. See \S \ref{subsection:applications} for some applications of the theorem.

\begin{proof}[Proof of Theorem \ref{thm:CEAP}]
We already know that the implications $(1) \Rightarrow (2)$ and $(1) \Rightarrow (3)$ are true even given a fixed embedding of $\mc{E}$ into $\overline{\mc{E}}$. $(3)$ clearly implies $(2)$. We must show $(2) \Rightarrow (1)$.

Suppose that every computable automorphism of $\mc{E}$ extends to a computable automorphism of $\overline{\mc{E}}$ (via some embedding of $\mc{E}$ into $\overline{\mc{E}}$). We will attempt to construct a computable automorphism $\alpha \in \gal(\mc{E}/\mc{F})$ while diagonalizing against possible computable automorphisms $\varphi_e \colon \overline{\mc{E}} \to \overline{\mc{E}}$ by making sure that the difference field $(\mc{E},\alpha)$ does not embed into the difference field $(\overline{\mc{E}},\varphi_e)$. It suffices to ensure that some difference subfield of $(\mc{E},\alpha)$ does not embed into $(\overline{\mc{E}},\varphi_e)$. We know that the construction must fail, and from this we will conclude that $\mc{E}$ has a splitting algorithm.

Note that the field $\mc{F}$ has a splitting algorithm and is perfect (since it is an algebraic extension of a perfect field), so any finite algebraic extension of $\mc{F}$ has a splitting algorithm which we can determine effectively from a generating set for the extension.

We will require a special enumeration $\{a_1,a_2,\ldots\}$ of $\mc{E}$ with the following properties:
\begin{enumerate}
	\item[(i)] for each $n$, $\mc{F}(a_1,\ldots,a_n)$ is a normal extension of $\mathbb{F}_p$, and
	\item[(ii)] for each $n$, there are no normal extensions of $\mathbb{F}_p$ which are strictly contained between $\mc{F}(a_1,\ldots,a_n)$ and $\mc{F}(a_1,\ldots,a_n,a_{n+1})$.
\end{enumerate}
We can find such an enumeration using the primitive element theorem and Galois theory, as follows. Suppose that we have already defined $a_1,\ldots,a_n$. Given a new element $x$ of $\mc{E}$, first check whether $x \in \mc{F}(a_1,\ldots,a_n)$ using the splitting algorithm for this field. If $x$ is in $\mc{F}(a_1,\ldots,a_n)$, we can safely set $a_{n+1} = x$. Otherwise, compute the conjugates $x = x_1,\ldots,x_\ell$ of $x$ over $\mathbb{F}_p$. Search for a single element $y$ such that \[\mc{F}(y) \in \mc{F}(a_1,\ldots,a_n,x_1,\ldots,x_\ell).\] Such an element exists by the primitive element theorem as $\mc{F}(a_1,\ldots,a_n,x_1,\ldots,x_\ell)$ is a finite separable extension of $\mc{F}$. Now we can compute the Galois group $\gal(\mc{F}(y)/\mc{F})$ as each automorphism of $\mc{F}(y)$ is determined by where it maps $y$. We can compute the normal subgroups and hence the normal extensions of $\mc{F}$ contained between $\mc{F}(a_1,\ldots,a_n)$ and $\mc{F}(a_1,\ldots,a_n,a_{n+1})$. Let
\[ \mc{F}(a_1,\ldots,a_n) \subsetneq \mc{K}_1 \subsetneq \cdots \subsetneq \mc{K}_m = \mc{F}(a_1,\ldots,a_n,a_{n+1})\]
be a maximal chain of normal extensions of $\mathbb{F}_p$. We can compute for each $\mc{K}_i$ a primitive generator over $\mc{F}$ and add these to the enumeration in order (with $y$ chosen as the primitive generator of $\mc{K}_m = \mc{F}(a_1,\ldots,a_n,a_{n+1})$).

\vspace*{10pt}
\textit{Construction.}
At each stage $s$, we will have defined an embedding $\alpha_s\colon \mc{F}\{a_1,\ldots,a_s\} \to \mc{E}$ fixing $\mc{F}$ such that $\alpha_0 \subseteq \alpha_1 \subseteq \cdots \subseteq \alpha_s$. Begin with $\alpha_0 \colon \mathbb{F}_p \to \mc{F}$.

At stage $s+1$, we are given $\alpha_s$. Use the splitting algorithm for $\mc{F}\{a_1,\ldots,a_s\}$ to check whether $a_{s+1} \in \mc{F}\{a_1,\ldots,a_s\}$. If it is, set $\alpha_{s+1} = \alpha_s$. Otherwise, check whether there is $e \leq s$ against which we have not yet diagonalized such that
\begin{enumerate}
	\item $a_i \in \mc{F}(a_1,\ldots,a_s,a_{s+1}) \setminus \mc{F}(a_1,\ldots,a_s)$ and
	\item for each $x \in \overline{\mc{E}}$ which satisfies the same minimal polynomial over $\mc{F}$ as $a_i$, $\varphi_{e,s}(x) \downarrow = c$ for some $c \in \overline{\mc{E}}$.
\end{enumerate}
This is a computable search. We have splitting algorithms for $\mc{F}(a_1,\ldots,a_s,a_{s+1})$ and $\mc{F}(a_1,\ldots,a_s)$, so we can check (1) for a given $a_i$. Also, $\varphi_{e,s}(x)$ converges only for $x$ among the first $s$-many elements of $\overline{\mc{E}}$, and we can use our splitting algorithms to compute the finite set of $a_i$ satisfying (1) and also satisfying the same minimal polynomial as some such $x$.

If there is such an $e$, choose the least one. Let $x_1,\ldots,x_n$ be the conjugates of $a_i$ over $\mc{F}$. By property (ii) of the enumeration,
\[ \mc{F}(a_1,\ldots,a_s,a_{s+1}) = \mc{F}(a_1,\ldots,a_s,x_1,\ldots,x_n).\]
Now we can extend $\varphi_{e,s}$ in a unique way to a computable automorphism of $\mc{F}(x_1,\ldots,x_n)$. If this automorphism is not the identity on $\mc{F}$, then since $\mc{F}$ is normal, $\varphi_{e,s}$ will be incompatible with $\alpha$ no matter how we define $\alpha$. Suppose that $\varphi_{e,s}$ is the identity on $\mc{F}$. Since $\gal(\mc{E}/\mc{F})$ has the non-covering property, by Lemma \ref{forking-equivalence} we can extend $\alpha_s$ to an automorphism of $\alpha_{s+1}$ of $\mc{F}(a_1,\ldots,a_s,a_{s+1})$ which is incompatible with the automorphism $\varphi_{e,s}$ on $\mc{F}(x_1,\ldots,x_n)$, in the sense that $(\mc{F}(x_1,\ldots,x_n),\varphi_e)$ does not embed as a difference field into $(\mc{F}(a_1,\ldots,a_s,a_{s+1}),\alpha_{s+1})$. We can do all of this computably by looking at the actions of the automorphisms on the generators of the fields.

\vspace*{10pt}

\noindent \emph{Verification.} We get an automorphism $\alpha = \bigcup_s \alpha_s$ of $\mc{E}$ which fixes $\mc{F}$. Now we know that for some $e$, $\varphi_e$ is an automorphism of $\overline{\mc{E}}$ such that $(\mc{E},\alpha)$ embeds into $(\overline{\mc{E}},\varphi_e)$ as a difference field. We claim that $\mc{E}$ has a splitting algorithm. The proof will be to show that we can compute the image of $\mc{E}$ in $\overline{\mc{E}}$ (since $\mc{E}$ is a normal extension of $\mathbb{F}_p$, this image is unique; we may fix some embedding $\imath \colon \mc{E} \to \overline{\mc{E}}$ and show that the image of $\mc{E}$ under $\imath$ is computable in $\overline{\mc{E}}$).

Let $s$ be a stage after which we never diagonalize against an $e' \leq e$. Fix $x \in \overline{\mc{E}}$, and let $x = x_1,x_2,\ldots,x_n$ be the conjugates of $x$ over $\mc{F}$. Let $t \geq s$ be a stage by which $\varphi_e(x_i)$ has converged for each $i$. Since $\mc{F}(a_1,\ldots,a_t)$ has a splitting algorithm, we can compute its image $\imath(\mc{F}(a_1,\ldots,a_t))$ in $\overline{\mc{E}}$.

\begin{claimstar}
$x \in \imath(\mc{E})$ if and only if $x \in \imath(\mc{F}(a_1,\ldots,a_t))$.
\end{claimstar}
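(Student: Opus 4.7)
The forward direction, $x \in \imath(\mc{F}(a_1,\ldots,a_t)) \Rightarrow x \in \imath(\mc{E})$, is immediate since $\mc{F}(a_1,\ldots,a_t) \subseteq \mc{E}$. For the converse, assume $x \in \imath(\mc{E})$, so $x = \imath(a_i)$ for some index $i$, and let $u$ be the least index with $a_i \in \mc{F}(a_1,\ldots,a_u)$. The goal is to show $u \leq t$, which we establish by contradiction: supposing $u > t$, we will exhibit a diagonalization event at stage $u$ that contradicts the choice of the stage $s$ past which the construction never diagonalizes against any $e' \leq e$.

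At stage $s+1 = u$ of the construction we examine whether $(e,i)$ is a candidate for diagonalization. Condition (1), namely $a_i \in \mc{F}(a_1,\ldots,a_u) \setminus \mc{F}(a_1,\ldots,a_{u-1})$, holds by the minimality of $u$. For condition (2), observe that the normality of $\mc{F}$ over $\mathbb{F}_p$ makes the copy of $\mc{F}$ inside $\overline{\mc{E}}$ canonical, so the roots in $\overline{\mc{E}}$ of the minimal polynomial of $a_i$ over $\mc{F}$ are exactly the conjugates $x_1,\ldots,x_n$ of $x$. By the choice of $t$, each $\varphi_{e,t}(x_j)$ converges, and because $u - 1 \geq t$ these convergences persist at stage $u - 1$; thus condition (2) holds for $e$ at stage $u$.

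Since $\varphi_e$ realizes a difference-field embedding of $(\mc{E},\alpha)$ into $(\overline{\mc{E}},\varphi_e)$, the construction cannot have diagonalized against $e$ at any previous stage: a successful diagonalization would have forced $\alpha$ to be incompatible with $\varphi_e$ on some subfield $\mc{F}(x_1,\ldots,x_n)$ and would thereby preclude the embedding. Therefore $e$ is still an available candidate at stage $u$, and the construction picks some least $e' \leq e$ meeting the diagonalization conditions and acts accordingly. But this diagonalization against $e' \leq e$ happens at stage $u > t \geq s$, contradicting the defining property of $s$. Hence $u \leq t$, giving $a_i \in \mc{F}(a_1,\ldots,a_t)$ and $x \in \imath(\mc{F}(a_1,\ldots,a_t))$. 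The main subtlety is the identification of the roots in $\mc{E}$ of the minimal polynomial of $a_i$ with the conjugates in $\overline{\mc{E}}$ of $x$, which is exactly where normality of $\mc{F}$ over $\mathbb{F}_p$ is used.
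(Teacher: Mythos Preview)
Your proof is correct and follows essentially the same approach as the paper: both argue by contradiction that if $a_i\notin\mc{F}(a_1,\ldots,a_t)$ then the diagonalization conditions for $e$ are met at the first stage $u>t$ where $a_i$ enters the tower. The only difference is cosmetic: the paper phrases the contradiction as ``we diagonalize against $e$, so $(\mc{E},\alpha)$ cannot embed into $(\overline{\mc{E}},\varphi_e)$,'' whereas you (more carefully) observe that \emph{some} $e'\leq e$ gets diagonalized against at stage $u>s$, contradicting the defining property of $s$; you also make explicit why $e$ itself is still available, a point the paper leaves implicit.
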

\begin{proof}

If $x \in \imath(\mc{F}(a_1,\ldots,a_t))$ then $x \in \imath(\mc{E})$. On the other hand, suppose that $x \in \imath(\mc{E})$, say $x = \imath(a_i)$, and suppose to the contrary that $a_i \notin \mc{F}(a_1,\ldots,a_t)$. Now, for some $t' > t$, we have
\[ a_i \in \mc{F}(a_1,\ldots,a_{t'+1}) \setminus \mc{F}(a_1,\ldots,a_{t'}). \]
Then at stage $t'+1$, we define $\alpha_{t'+1} \subset \alpha$ such that $(\mc{F}(x_1,\ldots,x_n),\varphi_e)$ does not embed into $(\mc{F}(a_1,\ldots,a_{t'+1}),\alpha_{t'+1})$ as a difference field. Since $\mc{F}(x_1,\ldots,x_n)$ and $\mc{F}(a_1,\ldots,a_{t'+1})$ are both normal extensions of $\mathbb{F}_p$ (with the former contained in the latter), $(\mc{E},\alpha)$ cannot embed into $(\overline{\mc{E}}),\varphi_e)$.
\end{proof}

From the claim we get a decision procedure for $\imath(\mc{E})$. Given $x \in \overline{\mc{E}}$, compute a stage $t \geq s$ at which $\varphi_e$ converges when applied to all of the conjugates of $x$ over $\mathbb{F}_p$. Using the splitting algorithm for $\mc{F}(a_1,\ldots,a_t)$, we check whether $x \in \imath(\mc{F}(a_1,\ldots,a_t))$ and hence whether $x \in \imath(\mc{E})$. 
\end{proof}


\subsection{The non-covering property}

To apply Theorem \ref{thm:CEAP}, we need a field extension whose Galois group has the non-covering property. We now give some examples of groups with the non-covering property before giving an example of an application of Theorem \ref{thm:CEAP}.

\begin{lem}\label{lem:abelian}
The following groups have the non-covering property:
\begin{enumerate}
	\item abelian groups,
	\item simple groups,
	\item the quaternion group.
\end{enumerate}
\end{lem}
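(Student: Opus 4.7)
My plan is to handle the three items in turn. The abelian and simple cases reduce to one-line arguments, while the quaternion group requires a direct case analysis through its small normal subgroup lattice, and this is where the actual work lies.

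For abelian groups, conjugation is trivial, so $x^{-1}hx = h$ for every $x$ and the non-covering condition collapses to producing $h \in gN$ outside $gM$. Since $M \subsetneq N$, I pick $n \in N \setminus M$ and set $h := gn$; then $h \in gN$, while $h \in gM$ would force $n \in M$, contradicting the choice of $n$. For simple groups, the only normal subgroups of $G$ are $\{e\}$ and $G$ itself. If $G$ is infinite, then $\{e\}$ has infinite index and no chain $M \subsetneq N$ of finite-index normal subgroups exists, so the condition holds vacuously. If $G$ is finite, the only valid chain is $\{e\} \subsetneq G$, so $gN = G$ and $gM = \{g\}$, and the task reduces to finding $h \in G$ not conjugate to $g$. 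Since $\{e\}$ is always its own conjugacy class, I take $h := e$ when $g \neq e$ and any non-identity element when $g = e$ (which exists because $G$ is necessarily nontrivial).

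For the quaternion group $Q_8$, my plan is to list the normal subgroups---$\{1\}$, the center $\{\pm 1\}$, the three index-two cyclic subgroups $\langle i\rangle$, $\langle j\rangle$, $\langle k\rangle$, and $Q_8$ itself---enumerate the pairs $M \subsetneq N$, and for each $g \in Q_8$ exhibit an $h \in gN$ whose conjugacy class avoids $gM$. The key input is that the conjugacy classes of $Q_8$ are exactly $\{1\}$, $\{-1\}$, $\{\pm i\}$, $\{\pm j\}$, $\{\pm k\}$, so every candidate $h$ can be checked immediately by reading off its conjugacy class. In the higher-index cases $[N:M] \geq 4$ a pigeonhole count on how the five conjugacy classes distribute among the cosets of $M$ easily produces a suitable $h$. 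The main obstacle is the low-index cases $[N:M] = 2$: there only two representatives of $gN$ modulo $gM$ are available, and each must be analyzed directly using the explicit conjugacy class list, with care taken when $gM$ meets one of the nontrivial conjugacy classes $\{\pm i\}$, $\{\pm j\}$, $\{\pm k\}$.
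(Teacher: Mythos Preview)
Your treatments of parts (1) and (2) are correct and essentially identical to the paper's argument; the paper is slightly terser on the simple case (it does not separate out the infinite case explicitly), but the content is the same.

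Part (3), however, cannot be proved as stated: the quaternion group does \emph{not} have the non-covering property. Take $M=\{1\}$, $N=\{1,-1\}$, and $g=i$. Then $gN=\{i,-i\}$ and $gM=\{i\}$. The conjugacy class of $i$ in $Q_8$ is $\{i,-i\}$ (since $j^{-1}ij=-i$), so every element of $gN$ is conjugate to $i\in gM$, and no valid $h$ exists. This is exactly the mechanism the paper itself uses later to show that $D_8$ fails the non-covering property (with $M=\{e\}$, $N=\{e,a^2\}$, $g=a$); the same argument applies verbatim to $Q_8$. Your proposed case analysis, carried out honestly on the pair $(M,N)=(\{1\},\{\pm 1\})$ with $g=i$, would run into this wall in the ``low-index'' case you flagged as the main obstacle.

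For comparison, the paper's proof of (3) rests on the assertion that ``every coset is a disjoint union of conjugacy classes,'' from which the conclusion would indeed follow. That assertion is false for cosets of the trivial subgroup: $i\{1\}=\{i\}$ is not a union of conjugacy classes. The assertion \emph{is} correct for cosets of every nontrivial normal subgroup of $Q_8$, since each such subgroup contains the center $\{\pm 1\}$ and the cosets of $\{\pm 1\}$ are precisely the nontrivial conjugacy classes together with $\{1\}\cup\{-1\}$. So if the definition were amended to require $M$ nontrivial, part (3) would go through by the paper's argument; as the definition stands, it does not.
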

\begin{proof}
(1) Let $G$ be an abelian group. Let $M \subsetneq N$ be normal subgroups of finite index, and fix $g \in G$. Let $h$ be an element of $g(N \setminus M)$. Then for all $x \in G$, $x^{-1} h x = h \notin gM$.

(2) Let $G$ be a simple group. Let $M \subsetneq N$ be normal subgroups of finite index, and fix $g \in G$. Then $N = G$ and $M$ is the trivial subgroup. Then $gN = G$; if $g = e$, pick $h \neq e$, and otherwise pick $h = e$. Then $gM = \{g\}$ and $h$ and $g$ are in different conjugacy classes.

(3) Let $G = \{\pm 1, \pm i, \pm j, \pm k\}$ be the quaternion group. The normal subgroups are $\{1\}$, $\{1,-1\}$, $\{1,-1,i,-i\}$, $\{1,-1,j,-j\}$, $\{1,-1,k,-k\}$, and $G$. The conjugacy classes are $\{1\}$, $\{-1\}$, $\{i,-i\}$, $\{j,-j\}$, and $\{k,-k\}$. It is easy to see that every coset is a disjoint union of conjugacy classes. Thus, given normal subgroups $M \subsetneq N$ and $g \in G$, there is a conjugacy class in $g N$ which is not in $g M$; let $h$ be in this conjugacy class.
\end{proof}

The example from Proposition \ref{cor:forking-zero} has an abelian Galois group $\prod_{n \in \omega} C_2$, and hence Proposition \ref{cor:forking-zero} follows immediately from Theorem \ref{thm:CEAP}. Also, in characteristic $p > 0$ we have:

\begin{thm}\label{cor:forking-p}
Let $\mc{E}$ be a computable normal extension of $\mathbb{F}_p$ in characteristic $p > 0$. The following are equivalent:
\begin{enumerate}
	\item $\mc{E}$ has a splitting algorithm,
	\item $\mc{E}$ has the computable extendability of automorphisms property,
	\item $\mc{E}$ has the uniform extendability of automorphisms property.
\end{enumerate}
\end{thm}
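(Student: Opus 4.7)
The plan is to deduce this statement immediately from Theorem \ref{thm:CEAP} by taking $\mc{F}=\mathbb{F}_p$ as the distinguished subfield. First I would verify the hypotheses of that theorem: $\mathbb{F}_p$ is finite and therefore has a trivial splitting algorithm, and it is a normal extension of itself. Thus the only non-trivial condition to check is that $\gal(\mc{E}/\mathbb{F}_p)$ has the non-covering property.

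For this I would use the special feature of characteristic $p>0$ that a normal algebraic extension of the prime field is contained in $\overline{\mathbb{F}_p}$, and every finite normal subextension is of the form $\mathbb{F}_{p^n}$ with cyclic Galois group $\gal(\mathbb{F}_{p^n}/\mathbb{F}_p)\cong\mathbb{Z}/n\mathbb{Z}$ generated by the Frobenius automorphism. It follows that $\gal(\mc{E}/\mathbb{F}_p)$ is a profinite inverse limit of finite cyclic groups (a quotient of $\widehat{\mathbb{Z}}$), and in particular is abelian. Lemma \ref{lem:abelian}(1) then says that an abelian group has the non-covering property.

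With all the hypotheses of Theorem \ref{thm:CEAP} verified, the equivalence of (1), (2), and (3) in the present statement is exactly the conclusion of that theorem. There is no real obstacle here; the content of Theorem \ref{cor:forking-p} is that in positive characteristic, the non-covering assumption that looked restrictive in Theorem \ref{thm:CEAP} becomes automatic once the base is the prime field, so the characterization holds for all computable normal extensions of $\mathbb{F}_p$ without further hypothesis.
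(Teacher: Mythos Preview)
Your proposal is correct and follows exactly the approach of the paper: verify that $\gal(\mc{E}/\mathbb{F}_p)$ is abelian (hence has the non-covering property by Lemma~\ref{lem:abelian}(1)) and then invoke Theorem~\ref{thm:CEAP} with $\mc{F}=\mathbb{F}_p$. The paper's proof is more terse, simply asserting that the Galois group is abelian, while you spell out why (finite subfields of $\overline{\mathbb{F}_p}$ are cyclic over $\mathbb{F}_p$), but the argument is the same.
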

\begin{proof}
The Galois group of every normal extension $\mc{K} / \mathbb{F}_p$ in characteristic $p > 0$ is abelian and hence has the non-covering property. Theorem \ref{thm:CEAP} finishes the proof.
\end{proof}

We can also take arbitrary products of Galois groups with the non-covering property and produce another group with the non-covering property. We must assume that the groups are profinite, but as every Galois group is profinite, this is not a restriction. See \cite{FriedJarden08} for an introduction to profinite groups.

\medskip

\noindent {\bf Theorem~\ref{prop:product}.} \textit{Let $\{G_i : i \in I\}$ be a collection of profinite groups, each of which has the non-covering property. Then $\prod_{i \in I} G_i$ has the non-covering property.}

\medskip

\begin{proof}
We reduce to the case of a product of two groups. If $M \subsetneq N$ are normal subgroups of $\prod_{i \in I} G_i$ of finite index, then $M$ contains a finite intersection of the groups
\[ \hat{G}_i = \{ (x_j)_{j \in I} : x_i = e \}. \]
The intersection of all of the $\hat{G}_i$ is the trivial group, so $\bigcap \hat{G}_i \subseteq M$. Moreover, it is easy to check that these groups are open in the profinite topology of the profinite group $\prod_{i \in I} G_i$ (which is just the product topology) and hence they are closed as well. As the profinite topology is compact, $M$ contains $\hat{G}_{i_1} \cap \cdots \cap \hat{G}_{i_n}$ for some $i_1,\ldots,i_n$.

Let $M',N' \subseteq G_{i_1} \times \cdots \times G_{i_2}$ be the projection of $M$ and $N$ to these indices; $M'$ and $N'$ are normal subgroups. Then
\[ (\prod_{i \in I} G_i) / M \cong (G_{i_1} \times \cdots \times G_{i_n}) / M'.\]
We will prove in  the following lemma that $G_{i_1} \times \cdots \times G_{i_n}$ has the non-covering property, and we can use this to check (for $M$ and $N$) that $\prod_{i \in I} G_i$ has the non-covering property.

\begin{lem}\label{lemma:keylemma}
Let $G$ and $H$ be groups which both have the non-covering property. Then $G \times H$ has the non-covering property.
\end{lem}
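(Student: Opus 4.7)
The plan is to apply Goursat's lemma to decompose the pair $M \subsetneq N$ of finite-index normal subgroups of $G \times H$ into separate data in $G$ and $H$, and then reduce to applications of the non-covering property of each factor. For $N \trianglelefteq G \times H$ of finite index, set $A_N = \pi_G(N)$, $B_N = \pi_H(N)$, $C_N = \{g \in G : (g,e) \in N\}$, and $D_N = \{h \in H : (e,h) \in N\}$; these are all normal of finite index in the appropriate factor, since quotients like $G/C_N$ inject into $(G \times H)/N$. Goursat furnishes an isomorphism $\phi_N : A_N/C_N \xrightarrow{\sim} B_N/D_N$ with $N = \{(a,b) \in A_N \times B_N : \phi_N(aC_N) = bD_N\}$, and normality of $N$ inside $G \times H$ forces the additional identities $[G, A_N] \leq C_N$ and $[H, B_N] \leq D_N$. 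Proceed identically for $M$, yielding $A_M \leq A_N$, $B_M \leq B_N$, $C_M \leq C_N$, $D_M \leq D_N$, with $\phi_M$ and $\phi_N$ compatible. The identity $|N| = |A_N| |D_N|$ then forces $[A_N : A_M] \cdot [D_N : D_M] > 1$, and symmetrically $[B_N : B_M] \cdot [C_N : C_M] > 1$.

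Fix $(g_1, g_2) \in G \times H$ and split into cases. If $A_M \subsetneq A_N$, apply the non-covering property of $G$ to the pair $A_M \subsetneq A_N$ at $g_1$ to find $h_1 \in g_1 A_N$ whose $G$-conjugacy class avoids $g_1 A_M$; pick any $h_2 \in H$ with $(h_1, h_2) \in (g_1, g_2) N$, possible because $h_1 g_1^{-1} \in A_N = \pi_G(N)$. Every $(G \times H)$-conjugate $(x_1^{-1} h_1 x_1, x_2^{-1} h_2 x_2)$ has first coordinate outside $g_1 A_M$, hence does not lie in $(g_1, g_2) M$, all of whose elements project into $g_1 A_M$. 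The case $B_M \subsetneq B_N$ is symmetric via the non-covering property of $H$. The remaining case is $A_M = A_N$ and $B_M = B_N$, which forces $C_M \subsetneq C_N$ and $D_M \subsetneq D_N$. Here, apply non-covering of $G$ to $C_M \subsetneq C_N$ at $g_1$ to produce $c^* \in g_1 C_N$ with no $G$-conjugate in $g_1 C_M$, and non-covering of $H$ to $D_M \subsetneq D_N$ at $g_2$ to produce $d^* \in g_2 D_N$ with no $H$-conjugate in $g_2 D_M$. Set $(h_1, h_2) = (c^*, d^*) \in (g_1, g_2)(C_N \times D_N) \subseteq (g_1, g_2) N$. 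To rule out a conjugate landing in $(g_1, g_2) M$, unpack $g_i^{-1} x_i^{-1} g_i x_i = [g_i^{-1}, x_i^{-1}]$; when $g_1 \in A_N$ the identity $[G, A_N] \leq C_N$ (and symmetrically for $H$) keeps the relevant commutators inside $C_N \times D_N$, so membership in $M$ reduces to membership in the restricted graph $\phi_M|_{C_N/C_M} : C_N/C_M \xrightarrow{\sim} D_N/D_M$, while if $g_1 \notin A_N$ or $g_2 \notin B_N$ the conjugate escapes $A_N \times B_N = A_M \times B_M$ and so cannot lie in $M$ at all.

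The main obstacle is this third case, where the strict containment $M \subsetneq N$ is diagonal rather than coordinate-wise, so controlling a single factor does not suffice. A naive independent choice of $c^*$ and $d^*$ can fail: for example, in $G = H = S_3$ with $M$ the diagonal and $(g_1, g_2) = (e, e)$, taking $c^* = (1\,2)$ and $d^* = (1\,3)$ admits the $G \times H$-conjugate $((2\,3), (2\,3)) \in M$. The remedy is to coordinate the two choices through the graph of $\phi_M|_{C_N/C_M}$, invoking the non-covering property in each factor with a target coset sharpened by $\phi_M$ so that the orbits of $c^*$ in $C_N/C_M$ and of $d^*$ in $D_N/D_M$ never land in $\phi_M$-matched pairs. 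Showing that such coordinated $c^*$ and $d^*$ always exist, using only the non-covering properties of $G$ and $H$ together with the commutator constraints $[G, A_N] \leq C_N$ and $[H, B_N] \leq D_N$, is the substantive heart of the lemma; the first two cases are, by comparison, straightforward projection arguments.
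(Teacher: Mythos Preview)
Your Cases 1 and 2 (where one of the projections strictly grows) are correct and match the paper. The gap is in Case 3. You yourself identify that an independent choice of $c^*$ and $d^*$ can fail, and then propose to ``coordinate the two choices through the graph of $\phi_M|_{C_N/C_M}$'' so that their conjugacy orbits in $C_N/C_M$ and $D_N/D_M$ never land in $\phi_M$-matched pairs---and you explicitly label this as the ``substantive heart of the lemma'' without carrying it out. That is the missing step, and it is not clear your suggested remedy can be made to work: the non-covering property only promises an $h\in gN$ whose conjugacy class avoids the \emph{single} coset $gM$. It gives no control over which other $M$-cosets the conjugates of $h$ visit, so there is no apparent way to force the orbit of $c^*$ in $C_N/C_M$ to avoid a \emph{prescribed set} of cosets dictated by $\phi_M$ and the orbit of $d^*$. (Incidentally, your illustrative example with $G=H=S_3$ is infelicitous, since $S_3$ itself fails the non-covering property; but the phenomenon you are pointing at is real.) The sub-case analysis based on whether $g_1\in A_N$ is also unfinished: you want to avoid $(g_1,g_2)M$, not $M$ itself, so ``the conjugate escapes $A_N\times B_N$'' does not by itself settle anything.

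The paper's Case 3 is organized quite differently and avoids the coordination problem altogether. Rather than invoking non-covering in both factors to build $(h_1,h_2)$, it takes $h_1=\pi_1(g)$---no non-covering used in the first coordinate at all---and uses the non-covering property of $H$ (applied to $D_M\subsetneq D_N$) to choose $h_2$. The non-covering property of $G$ is then used \emph{inside the contradiction argument}: assuming some conjugate lands in $(g_1,g_2)M$, one writes the first coordinate as $x^{-1}g_1 x=g_1 m_1$ with $m_1\in A_M$, and applies the non-covering property of $G$ to the pair $C_M\subsetneq \langle C_M,m_1\rangle$ (using $[G,A_M]\subseteq C_M$ to see this subgroup is normal) to force $m_1\in C_M$. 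That pushes the witness $m$ to have $\pi_2(m)\in D_M$, contradicting the choice of $h_2$. So the key idea you are missing is this asymmetric use of the two hypotheses: one factor supplies the candidate, the other is invoked only after assuming failure, against a subgroup manufactured from the hypothetical bad conjugate.
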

\begin{proof}
Let $M \subsetneq N$ be normal subgroups of $G \times H$. Let $\pi_1$ and $\pi_2$ be the projections onto $G$ and onto $H$ respectively.

\begin{case} We have $\pi_1(M) \subsetneq \pi_1(N)$.\end{case}
\noindent Let $a = (a_1,a_2) \in G \times H$ and $b = (b_1,b_2) \in G \times H$ be arbitrary. Choose $g = a_1 g' \in a_1 \pi_1(N)$ such that for all $x \in G$, $x^{-1} g x \notin b_1 \pi_1(M)$. Let $h' \in H$ be such that $(g',h') \in N$, and let $h = a_2 h'$. Then $f = (g,h) \in a N$ is such that for all $z = (x,y) \in G \times H$, $z^{-1} f z \notin b M$.

\begin{case}We have $\pi_2(M) \subsetneq \pi_2(N)$.\end{case}
\noindent Similar to Case 1.

\begin{case}$\pi_1(M) = \pi_1(N)$ and $\pi_2(M) = \pi_2(N)$.\end{case}
\noindent Define $M_1 \subseteq G$ and $M_2 \subseteq H$ by
\[ M_1 = \{ x \in G : (x,e) \in M \} \text{ and } M_2 = \{ y \in H : (e,y) \in M \}. \]
Then $M_1 \times M_2 \subseteq M$. Define $N_1$ and $N_2$ similarly. We have $M_1 \subseteq N_1$ and $M_2 \subseteq N_2$.

\begin{claim}$M_1$ and $N_1$ are normal subgroups of $G$ and $M_2$ and $N_2$ are normal subgroups of $H$.
\end{claim}
\begin{proof}
We show that $M_1$ is a normal subgroup of $G$. Let $m \in M_1$ and $x \in G$. Let $x' = (x,e)$ and $m' = (m,e)$. Then, since $M$ is a normal subgroup of $G$, $(x^{-1} m x,e) = x'^{-1} m' x' \in M$. Hence $x^{-1} m x \in M_1$.
\end{proof}

\begin{claim}$M_1 \subsetneq N_1$ and $M_2 \subsetneq N_2$.
\end{claim}
\begin{proof}
We use Goursat's lemma:
 \begin{lemstar}[\cite{Goursat89}] Let $G_1$ and $G_2$ be groups. Let $H$ be a subgroup of $G_1 \times G_2$ such that the projections $\pi_1\colon H \to G_1$ and $\pi_2\colon H \to G_2$ are surjective. Let $N_1$ and $N_2$ be the kernels of $\pi_2$ and $\pi_1$ respectively; $N_1$ can be identified as a normal subgroup of $G_1$, and $N_2$ as a normal subgroup of $G_2$. Then the image of $H$ in $G_1 / N_1 \times G_2 / N_2$ is isomorphic to the graph of an isomorphism between $G_1 / N_1$ and $G_2 / N_2$. \end{lemstar}
 By Goursat's lemma,
 the image of $M$ in $\pi_1(M) / M_1 \times \pi_2(M) / M_2$ is the graph of an isomorphism $\pi_1(M) / M_1 \cong \pi_2(M) / M_2$. The same is true with $M$ replaced by $N$. Since $\pi_1(M) = \pi_1(N)$, $\pi_2(M) = \pi_2(N)$, and $M \subsetneq N$, we must have $M_1 \subsetneq N_1$ and $M_2 \subsetneq N_2$.
\end{proof}

\begin{claim}
$[G,\pi_1(M)] \subseteq M_1$ and $[H,\pi_2(M)] \subseteq M_2$. Thus $[G \times H,\pi_1(M) \times \pi_2(M)] \subseteq M_1 \times M_2$.
\end{claim}
\begin{proof}
Let $g \in G$ and $m \in \pi_1(M)$. Let $g' = (g,e)$ and $m' = (m,e)$. Since $M$ is a normal subgroup of $G \times H$, $[g',m'] = ([g,m],e) \in M$. Thus $[g,m] \in M_1$.
\end{proof}

Fix $g \in G \times H$ for which we will show that there is $h \in g N$ such that for all $x \in G \times H$, $x^{-1} h x \notin g M$. This will finish the proof of the proposition. Since $N_2 \supsetneq M_2$, we can choose $b \in \pi_2(g) M_2$ such that for all $y \in H$, $y^{-1} b y \notin \pi_2(g) M_2$. Choose $a = \pi_1(g)$. Then $(a,b) \in g N$. Suppose that $(x,y) \in G \times H$ is such that $(x^{-1} a x, y^{-1} b y) \in g M$. Let $m \in M$ be such that $x^{-1} a x = \pi_1(g m)$. 

\begin{claim}
$\pi_1(m) \in M_1$.
\end{claim}
\begin{proof}
Suppose to the contrary that $\pi_1(m) \notin M_1$. Let $m_1 = \pi_1(m)$ and $g_1 = a = \pi_1(g)$. We have $x^{-1} g_1 x = g_1 m_1$. Let $K$ be the subgroup of $G$ generated by $M_1$ and $m_1$. Since $M_1$ is a normal subgroup of $G$, each element of $K$ can be written in the form $k m_1^\ell$ for some $k \in M_1$ and $\ell \in \mathbb{N}$. $K$ is a normal subgroup of $G$ since $[G,m_1] \in M_1$. If $m_1 \notin M_1$, then $M_1$ is a proper subgroup of $K$. So there is $h \in K$ such that for all $z \in G$, $z^{-1} g_1 h z \notin g_1 M_1$. Let $r$ be such that $m_1^r = e$ and let $h = k m_1^\ell$ with $k \in M_1$ and $\ell < r$. Then since $[x,m_1] \in M_1$,
\[ x^{-(r - \ell)} g_1 h x^{r - \ell} \in x^{-(r - \ell)} g_1 x^{r - \ell} m_1^\ell M_1 = g_1 m_1^r M_1 = g_1 M_1. \]
This is a contradiction which proves the claim.
\end{proof}

Since $\pi_1(m) \in M_1$, we have $(e,\pi_2(m)) = m - (\pi_1(m),e) \in M$, and so $\pi_2(m) \in M_2$. But $y^{-1} b y = \pi_2(g m) \notin \pi_2(g) M_2$, a contradiction. This completes the proof of the lemma.
\end{proof}\renewcommand{\qedsymbol}{}
\end{proof}

\subsection{Examples}
\label{subsection:applications}

We can apply Theorem \ref{prop:product} to construct groups having the non-covering property from the groups in Lemma~\ref{lem:abelian}. In all cases, we know that if the field $\mc{E}$ has a splitting algorithm, then it has the computable extendability of automorphisms property.

We begin by noting that there exist groups without the non-covering property:

\begin{prop}
The following groups do not have the non-covering property: $S_3$, $D_8$, and $A_4$.
\end{prop}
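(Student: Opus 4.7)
The plan is to verify the negation of the non-covering property for each of $S_3$, $D_8$, and $A_4$ by exhibiting, in each case, finite-index normal subgroups $M \subsetneq N$ and an element $g \in G$ such that every element of $gN$ is $G$-conjugate to some element of $gM$. In all three cases I will take $M = \{e\}$, so $gM = \{g\}$; the task then reduces to finding a normal subgroup $N$ and an element $g$ such that the entire coset $gN$ lies in the single $G$-conjugacy class of $g$. Once $M$, $N$, $g$ are produced, checking $M \subsetneq N$, normality, and finite index is immediate since each group is finite.

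For $S_3$, I take $N = A_3 = \{e,(123),(132)\}$ and $g = (12)$. A direct multiplication gives $gA_3 = \{(12),(13),(23)\}$, the set of all transpositions in $S_3$, and any two transpositions in $S_n$ are conjugate. For $D_8 = \langle r,s : r^4=s^2=e,\ srs^{-1}=r^{-1}\rangle$, I take $N$ to be the center $\{e,r^2\}$ and $g = r$. Then $gN = \{r, r^3\}$ and conjugation by $s$ sends $r$ to $r^{-1} = r^3$, so $\{r,r^3\}$ is a single $D_8$-conjugacy class. These two cases are essentially one-line verifications.

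The step requiring a little more care is $A_4$. I take $N = V_4 = \{e,(12)(34),(13)(24),(14)(23)\}$, the Klein four-group, and $g = (123)$. Because $V_4$ is normal in $A_4$, conjugation respects cosets of $V_4$, so the conjugacy class of $(123)$ is contained in the coset $(123)V_4$, which has exactly $4$ elements. The centralizer of $(123)$ in $A_4$ is $\langle (123)\rangle$, of order $3$, so its conjugacy class in $A_4$ has size $|A_4|/3 = 4$. The conjugacy class therefore equals the whole coset $gN$, and every element of $gN$ is conjugate to $g$.

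The only real subtlety is the $A_4$ verification, where one has to remember that the $8$ three-cycles of $A_4$ split into two $A_4$-conjugacy classes of size $4$ (rather than forming one class as they do in $S_4$); this is precisely what makes the coset $(123)V_4$ coincide with a single conjugacy class. Once the three instances are exhibited, each group fails Definition~\ref{defn:NCP} with the stated choice of $(M,N,g)$.
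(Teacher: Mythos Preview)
Your proof is correct and follows the same approach as the paper: in each case you take $M=\{e\}$ and the same choice of $N$ and $g$ (rotations and a reflection for $S_3$; the center and a generator of the rotation subgroup for $D_8$; the Klein four-group and a $3$-cycle for $A_4$). The only cosmetic difference is that for $A_4$ you use an orbit--stabilizer count together with the abelianness of $A_4/V_4$ to identify the coset with a conjugacy class, whereas the paper simply lists the four elements of $(1\,2\,3)V_4$ and observes they are all conjugate.
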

\begin{proof}
For $S_3$, let $M = \{e\}$ and $N$ the normal subgroup of rotations. Let $g$ be a reflection. Then $gN$ is the set of all reflections, and all reflections are conjugate.

Write $D(8) = \{e,a,a^2,a^3,x,ax,a^2x,a^3x\}$. Let $M = \{e\}$, $N = \{e,a^2\}$, and $g = a$. Then $aM = \{a\}$ and $aN = \{a,a^3\}$. We have $x^{-1} a x = a^3$.

For $A_4$, let $M = \{e\}$ and $N$ the normal subgroup of $A_4$ isomorphic to $C_2 \times C_2$. Let $g$ be the permutation $(1,2,3)$. Then $gN$ consists of $(1,2,3)$, $(1,4,2)$, $(2,4,3)$, and $(1,3,4)$ all of which are conjugate.  
\end{proof}

Even if $\gal(\mc{E}/\mathbb{F}_p)$ does not have the non-covering property, we can still sometimes apply Theorem \ref{thm:CEAP} either by finding the right field $\mc{F}$ as in the statement of the theorem, or using Lemma \ref{lem:subfield} below with a subfield $\mc{F}$ and applying Theorem \ref{thm:CEAP} to the field extension $\mc{F}/\mathbb{F}_p$. The following two examples illustrate these methods. We begin with a field extension $\mc{E} / \mathbb{Q}$ whose Galois group does not have the non-covering property, but we can use the freedom in choosing the field $\mc{F}$ in the statement of Theorem \ref{thm:CEAP} to apply the theorem.

\begin{exam}\label{ex:one}
Let $\mc{E} = \mathbb{Q}(\omega,\sqrt[3]{p_n} : n \in \varnothing')$ where $\omega$ is a primitive cube root of unity. Note that $\gal(\mc{E} / \mathbb{Q})$ does not have a forking lattice of subgroups for the same reason as $S_3$, because its Galois group is
\[ \gal(\mc{E} / \mathbb{Q}) = \prod_{i \in \omega} C_3 \rtimes C_2 \]
with $C_2$ acting on $C_3$ by inverting elements. Here, we need to know that the intersection of the fields $\mathbb{Q}(\omega,\sqrt[3]{p_n} : n \in U)$ and $\mathbb{Q}(\omega,\sqrt[3]{p_n} : n \in V)$ for $U$ and $V$ disjoint is the field $\mathbb{Q}(\omega)$. See \cite{Mordell53}.

Let $\mc{F} = \mathbb{Q}(\omega)$. Then $\mc{F}$ has a splitting algorithm. $\gal(\mc{E}/\mc{F}) = \prod_{i \in \omega} C_3$ which is abelian. Since $\mc{E}$ does not have a splitting algorithm, by Theorem \ref{thm:CEAP} it does not have the computable extension of automorphisms property.
\end{exam}

The following lemma will allow us to consider a subextension of $\mc{E}$; this will be useful when the Galois group of the extension does not have the non-covering property, but it has a quotient which does.

\begin{lem}\label{lem:subfield}
Let $\mc{E} \supseteq \mc{F} \supseteq \mathbb{F}_p$ be computable algebraic extensions such that $\mc{E}$ is a normal extension of $\mathbb{F}_p$. Suppose that given $x \in \mc{E}$, we can compute the minimal polynomial of $x$ over $\mc{F}$. Then if $\mc{E}$ has the computable extendability of automorphisms property, $\mc{F}$ does as well.
\end{lem}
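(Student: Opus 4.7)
The plan is to reduce the statement to the computable extendability of automorphisms property of $\mc{E}$. Given a computable automorphism $\alpha \colon \mc{F} \to \mc{F}$, I would first extend $\alpha$ to a computable automorphism $\alpha' \colon \mc{E} \to \mc{E}$; then apply the hypothesis on $\mc{E}$ to obtain a Rabin embedding $\imath \colon \mc{E} \to \overline{\mc{E}}$ and a computable automorphism $\beta$ of $\overline{\mc{E}}$ with $\beta \circ \imath = \imath \circ \alpha'$; and finally restrict. Identifying $\overline{\mc{F}}$ with $\overline{\mc{E}}$, the composition $\jmath = \imath \circ i$ (where $i \colon \mc{F} \hookrightarrow \mc{E}$ is the inclusion) is a computable Rabin embedding of $\mc{F}$, since $\overline{\mc{E}}$ is algebraic over $\imath(\mc{E})$, which is algebraic over $\jmath(\mc{F})$. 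A direct computation then yields $\beta \circ \jmath = \jmath \circ \alpha$, so $\beta$ $\jmath$-extends $\alpha$ as required.

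The substantive work is to extend $\alpha$ to $\alpha'$ computably. A preliminary observation is that the hypothesis already implies $\mc{F}$ has a splitting algorithm. Indeed, $\mc{E}/\mc{F}$ is normal because $\mc{F}$ is intermediate in the normal extension $\mc{E}/\mathbb{F}_p$, and $\mc{E}/\mc{F}$ is separable because $\mc{F}$, as an algebraic extension of the perfect prime field $\mathbb{F}_p$, is itself perfect. Hence every irreducible polynomial in $\mc{F}[X]$ splits into distinct linear factors over $\mc{E}$. To factor a square-free $q \in \mc{F}[X]$, I would enumerate $\mc{E}$ until finding all $\deg q$ roots of $q$, use the hypothesis to compute each root's minimal polynomial over $\mc{F}$, and group the roots by minimal polynomial; each group yields an irreducible factor. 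Given this splitting algorithm for $\mc{F}$, Theorem \ref{Kronecker} provides, uniformly in $x_1, \ldots, x_n \in \mc{E}$, splitting algorithms for the finite subextensions $\mc{F}(x_1, \ldots, x_n)$.

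To build $\alpha'$, enumerate $\mc{E}$ as $x_1, x_2, \ldots$ and inductively construct a chain of embeddings $\alpha'_n \colon \mc{F}(x_1, \ldots, x_n) \to \mc{E}$ extending $\alpha$. At stage $n+1$, use the splitting algorithm for $\mc{F}(x_1, \ldots, x_n)$ to compute the minimal polynomial $q$ of $x_{n+1}$ over that field, form the polynomial $q^{\alpha'_n}$ by applying $\alpha'_n$ to each coefficient, and search $\mc{E}$ for a root $r$ of $q^{\alpha'_n}$; set $\alpha'_{n+1}(x_{n+1}) = r$. The search terminates because $\alpha'_n$ extends abstractly to an automorphism of $\mc{E}$: it is an $\mathbb{F}_p$-embedding of a subfield of $\mc{E}$ into $\mc{E}$, and any $\mathbb{F}_p$-embedding of $\mc{E}$ into $\overline{\mathbb{F}_p}$ has image $\mc{E}$ by normality of $\mc{E}/\mathbb{F}_p$. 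The same normality argument applied to the limit $\alpha' = \bigcup_n \alpha'_n$ shows that $\alpha'$ is surjective, hence an automorphism.

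The main obstacle is guaranteeing the existence of the root $r$ in $\mc{E}$ at each stage: this relies crucially on the full normality hypothesis $\mc{E}/\mathbb{F}_p$, not merely $\mc{E}/\mc{F}$. The remaining verifications, including the computability and mutual compatibility of the embeddings $\alpha'_n$, are then straightforward.
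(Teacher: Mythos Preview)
Your overall strategy matches the paper's one-line proof exactly: computably extend $\alpha$ to an automorphism $\alpha'$ of $\mc{E}$ in the style of Theorem~\ref{thm:splitting-implies-turing-functional}, apply the computable extendability of automorphisms property of $\mc{E}$, and compose with the inclusion $\mc{F}\hookrightarrow\mc{E}$.

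The gap is your ``preliminary observation'' that the hypothesis forces $\mc{F}$ to have a splitting algorithm. The argument breaks because, for an arbitrary square-free $q\in\mc{F}[X]$, the roots need not lie in $\mc{E}$: normality of $\mc{E}/\mc{F}$ only says that an irreducible factor of $q$ with \emph{some} root in $\mc{E}$ splits completely there, so a factor with no root in $\mc{E}$ makes your enumeration hang. For a concrete counterexample take $\mc{E}=\mc{F}$ to be any computable normal extension of $\mathbb{F}_p$ without a splitting algorithm; the hypothesis is then trivially satisfied (the minimal polynomial of $x\in\mc{F}$ over $\mc{F}$ is $X-x$), yet $\mc{F}$ has no splitting algorithm. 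Note too that your observation proves too much: if $\mc{F}$ really had a splitting algorithm, it would already have the computable extendability of automorphisms property outright (by Theorem~\ref{thm:splitting-implies-turing-functional} applied with $\mc{K}=\overline{\mc{F}}$), and the hypothesis on $\mc{E}$ would be idle.

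The repair is to compute the minimal polynomial of $x_{n+1}$ over $\mc{F}(x_1,\ldots,x_n)$ directly from the hypothesis, rather than via a splitting algorithm for $\mc{F}$. Since $\mc{F}$ is perfect, the primitive element theorem gives some $c$ with $\mc{F}(c)=\mc{F}(x_1,\ldots,x_n)$; search for such a $c$ together with witnesses $c\in\mc{F}[x_1,\ldots,x_n]$ and $x_i\in\mc{F}[c]$, and then the hypothesis yields $[\mc{F}(x_1,\ldots,x_n):\mc{F}]=\deg_{\mc{F}}c$. Doing the same for $\mc{F}(x_1,\ldots,x_{n+1})$ gives the degree of the sought minimal polynomial, after which the polynomial itself is found by a bounded search in $\mc{F}[c][X]$. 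With this in hand, the rest of your construction of $\alpha'$ and the restriction argument go through unchanged.
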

\begin{proof}
This follows from the fact that we can computably extend an automorphism of $\mc{F}$ to an automorphism of $\mc{E}$ in the style of Theorem \ref{thm:splitting-implies-turing-functional} and uses the fact that $\mc{F}$ is a perfect field. 
\end{proof}

We now have an example where we apply this lemma together with Theorem \ref{thm:CEAP}.

\begin{exam}\label{ex:two}
This example is quite complicated. The idea is to product a field extension whose Galois group is $\prod_{n \in \omega} S_3$, but which does not have a splitting algorithm.

Let $q_0,q_1,\ldots$ be a list of infinitely many distinct primes in the arithmetic progression $4n + 27$, and let $a_n$ be such that $4a_n + 27 = q_n$. Let $\mc{E}$ be the splitting field, over $\mathbb{Q}$, of the polynomials $\{x^3 + a_nx + a_n : n \in \varnothing'\}$. Let $\omega_n$ be a primitive element for the splitting field of $x^3 + a_nx + a_n$, so that $\mc{E} = \mathbb{Q}(\omega_n : n \in \varnothing')$. Each of these polynomials has discriminant $D_n = -4a_n^3 - 27 a_n^2 = -a_n^2 q_n < 0$, and hence $\mathbb{Q}(\omega_n)$ has Galois group $S_3$. We claim that the Galois group of $\mc{E}$ is $\prod_{n \in \omega} S_3$. It suffices to show that given $m$ and $n_1,\ldots,n_\ell$ all distinct that $\mathbb{Q}(\omega_m)$ and $\mathbb{Q}(\omega_{n_1},\ldots,\omega_{n_\ell})$ are disjoint. Suppose not; then there is a non-trivial subfield $\mc{K}$ of $\mathbb{Q}(\omega_m)$ which is contained in $\mathbb{Q}(\omega_{n_1},\ldots,\omega_{n_\ell})$. We may assume that $\mc{K} = \mathbb{Q}(\sqrt{D_m}) = \mathbb{Q}(\sqrt{-q_m})$. Then $\sqrt{D_m} \in \mathbb{Q}(\sqrt{D_{n_1}},\ldots,\sqrt{D_{n_\ell}})$, a contradiction since $q_m,q_{n_1},\ldots,q_{n_\ell}$ are distinct primes. $\mc{E}$ does not have a splitting algorithm, but $\prod_{n \in \omega} S_3$ does not have a forking lattice of subgroups.

Now let $\mc{F} = \mathbb{Q}(\sqrt{-q_n} : n \in \varnothing')$. $\mc{F}$ does not have a splitting algorithm. By Theorem \ref{thm:CEAP}, $\mc{F}$ does not have the computable extension of isomorphisms property, and hence by Lemma \ref{lem:subfield}, $\mc{E}$ does not have the computable extension of automorphisms property.
\end{exam}

We do not know of any examples in which one cannot use either a direct application of Theorem \ref{thm:CEAP} or one of the methods in these two examples.

\section{Applications to Difference Closed Fields}
\label{Difference}

We will conclude this paper by applying our results to difference closed fields. The main idea will be to note that $(\mc{F},\sigma)$ embeds into a computable difference closed field if and only if there is an embedding $\imath$ of $\mc{F}$ into $\overline{\mc{F}}$ and an automorphism $\tau$ of $\overline{\mc{F}}$ such that $\tau$ $\imath$-extends $\sigma$. In the one direction, this will follow from an effective Henkin construction, while on the other hand it will follow from the fact that the algebraic closure of the prime field can be enumerated in any difference closed field.

\begin{thm}\label{thm:embedding-into-acfa}
Let $\mc{F}$ be a computable extension of $\mathbb{F}_p$, and $\sigma$ a computable automorphism of $\mc{F}$. Then the following are equivalent:
\begin{enumerate}
	\item $(\mc{F},\sigma)$ embeds computably into a computable difference closed field.
	\item There is a computable embedding $\imath \colon \mc{F} \to \overline{\mc{F}}$ of $\mc{F}$ into a computable presentation of its algebraic closure and a computable automorphism $\tau$ of $\overline{\mc{F}}$ which $\imath$-extends $\sigma$.
\end{enumerate}
\end{thm}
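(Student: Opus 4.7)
I will prove both directions. The forward direction $(1) \Rightarrow (2)$ extracts the desired $\overline{\mc{F}}$ and $\tau$ from the computable difference closed field, while $(2) \Rightarrow (1)$ builds the computable difference closed field by an effective Henkin construction in the style of the proof of Theorem~\ref{thm:1} but targeting the theory $ACFA_p$.

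For $(1) \Rightarrow (2)$, suppose $\alpha \colon (\mc{F},\sigma) \to (\mc{L},\tau_{\mc{L}})$ is a computable embedding into a computable difference closed field. Since $\mc{L}$ is algebraically closed, the algebraic closure $M$ of $\alpha(\mc{F})$ inside $\mc{L}$ is c.e.\ in $\mc{L}$: enumerate elements as they are seen to satisfy some polynomial over $\alpha(\mc{F})$. Because $\tau_{\mc{L}} \circ \alpha = \alpha \circ \sigma$ and $\sigma$ is an automorphism, $\alpha(\mc{F})$ and hence $M$ are setwise fixed by $\tau_{\mc{L}}$. I will build a computable presentation $\overline{\mc{F}}$ of $M$ on domain $\omega$ by assigning fresh codes to elements of $M$ as they appear; the field operations on $\overline{\mc{F}}$ and the automorphism $\tau$ are computed by performing the corresponding operation in $\mc{L}$ and waiting for the result to enter $M$ (equality in the computable field $\mc{L}$ is decidable). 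Setting $\imath(a)$ to be the code of $\alpha(a)$ gives a computable Rabin embedding, and $\tau \circ \imath = \imath \circ \sigma$ is immediate from $\tau_{\mc{L}} \circ \alpha = \alpha \circ \sigma$.

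For $(2) \Rightarrow (1)$, I work with the computable difference field $(\overline{\mc{F}},\tau)$ and build a computable difference closed field $(\mc{L},\tau_{\mc{L}})$ extending it via a Henkin construction in the language of difference fields augmented with constants $\underline{a}$ for each $a \in \overline{\mc{F}}$ together with Henkin constants $c_0,c_1,\ldots$. At each stage $s$ I maintain a finite conjunction $\Delta_s$ of atomic and negated atomic formulas which is consistent with $ACFA_p$, and I alternate among (i) enumerating the next atomic fact from the computable diagram of $(\overline{\mc{F}},\tau)$, (ii) deciding a new sentence from a fixed enumeration and adding it or its negation to build a complete theory, and (iii) introducing Henkin witnesses for existential sentences. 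Consistency of $\Delta_s \wedge \varphi$ with $ACFA_p$ is decidable: after replacing every constant (from $\overline{\mc{F}}$ or Henkin) by a fresh variable, the question becomes whether $ACFA_p \vdash \forall \bar{x}\, \neg(\Delta_s \wedge \varphi)(\bar{x})$, and $ACFA_p$ is decidable in every characteristic. The resulting quotient structure is a computable model of $ACFA_p$, i.e., a computable difference closed field; the map $a \mapsto [\underline{a}]$ computably embeds $(\overline{\mc{F}},\tau)$ into $(\mc{L},\tau_{\mc{L}})$, and composing with $\imath$ yields the desired embedding of $(\mc{F},\sigma)$.

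The main obstacle is ensuring that the Henkin construction, which only inspects finitely much of the diagram at each finite stage, in the limit produces a model of $ACFA_p$ extending all of $(\overline{\mc{F}},\tau)$. Inductive consistency is preserved because each consistency check reduces, by replacing constants with variables, to decidable questions about $ACFA_p$ alone; the overall construction never deadlocks because every difference field embeds into some difference closed field, so $(\overline{\mc{F}},\tau)$ in particular does. A secondary subtlety is that a Henkin witness forced to be algebraic over $\overline{\mc{F}}$ must be identified with an existing element of $\overline{\mc{F}}$ (since $\overline{\mc{F}}$ is already algebraically closed), and such an identification is automatically consistent with the diagram axioms governing roots of polynomials over $\overline{\mc{F}}$.
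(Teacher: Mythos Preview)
Your $(1)\Rightarrow(2)$ direction is essentially the paper's argument, written out in slightly more detail.

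Your $(2)\Rightarrow(1)$ direction has a genuine gap. At step~(ii) you decide a sentence $\chi$ by testing whether $\Delta_s\wedge\chi$ is consistent with $ACFA_p$, where $\Delta_s$ contains only a \emph{finite} fragment of the diagram of $(\overline{\mc{F}},\tau)$. But the truth value of $\chi$ in any difference closed extension of $(\overline{\mc{F}},\tau)$ may depend on diagram facts not yet in $\Delta_s$. Concretely, take $\tau$ to be the identity on $\overline{\mathbb{Q}}$ and $\chi$ the sentence asserting that the distinguished automorphism moves some element algebraic over the prime field. Early on, $\Delta_s\wedge\chi$ is consistent with $ACFA_0$, so you add $\chi$ and then a Henkin witness $c$. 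Once enough diagram facts enter (the roots of the minimal polynomial of $c$, their distinctness, and that $\tau$ fixes each of them), $\Delta_{s'}$ together with your earlier commitments becomes inconsistent with $ACFA_0$. Your justification that ``the construction never deadlocks because $(\overline{\mc{F}},\tau)$ embeds into some difference closed field'' only guarantees that the \emph{diagram} is consistent with $ACFA_p$; it says nothing about the diagram together with the sentences you have already decided.

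The paper avoids this by observing, before any Henkin construction, that the theory $T=ACFA_p$ together with the (existential) diagram of $(\overline{\mc{F}},\tau)$ is already \emph{complete}: the completions of $ACFA$ are determined by the action of the automorphism on the algebraic closure of the prime field, and model completeness of $ACFA$ (every formula equivalent to an existential one) then forces $T$ to decide every sentence with parameters from $\overline{\mc{F}}$. Since $T$ is also recursively axiomatized, it is decidable, and one simply takes a decidable model of $T$. If you want to keep the explicit Henkin format, the fix is to run all consistency checks against $T$ rather than against $ACFA_p$ alone.
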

\begin{proof}
We begin by proving  (1)$\Rightarrow$(2). Suppose that there is a computable difference closed field $(\mc{K},\rho)$ into which $(\mc{F},\sigma)$ embeds. We can enumerate in $\mc{K}$ the algebraic closure $\overline{\mc{F}}$ of $\mc{F}$ (which is also the algebraic closure of the prime field) and the restriction $\tau$ of $\rho$ to $\overline{\mc{F}}$ (recall that every computable presentation of the algebraic closure of $\mc{F}$ is computable isomorphic to every other computable presentation). Then, since $(\mc{F},\sigma)$ embeds into $(\mc{K},\rho)$ and is algebraic over $\mathbb{F}_p$, its image is in $(\overline{\mc{F}},\tau)$. Then $\tau$ is an extension of $\sigma$ to $\overline{\mc{F}}$ via this embedding.

We now prove (1)$\Rightarrow$(2). The completions of $ACFA$ are given by the possible actions of the automorphism $\sigma$ on the algebraic closure of the prime field $\bar{\mathbb{F}}_p$ (see \cite[(1.4) of][]{ChatzidakisHrushovski99}). Let $\imath$ be a computable embedding of $\mc{F}$ into $\overline{\mc{F}}$ and $\tau$ an $\imath$-extension of $\sigma$ to $\overline{\mc{F}}$. Let $\mathcal{L}_{\overline{\mc{F}}}$ be the language of difference fields together with names for the constants of $\overline{\mc{F}}$. Let $T$ be the consistent theory axiomatized by $ACFA$ together with the existential diagram of $(\overline{\mc{F}},\sigma)$. Then $T$ contains a completion of $ACFA$, and since every formula is equivalent to an existential formula modulo $ACFA$, $T$ is complete. Moreover, $T$ is recursively axiomatizable and hence computable. So $T$ has a decidable model $(\mc{K},\rho)$. Using the embedding $\imath \colon \mc{F} \to \overline{\mc{F}}$, we get an embedding of the difference field $(\mc{F},\sigma)$ into $(\mc{K},\rho)$.
\end{proof}

We can use this, together with the examples from the previous section, to see that Rabin's Theorem on the existence of computable algebraic closures (and its analogue in differentially closed fields due to Harrington \cite{Harrington74}) does not hold in the context of difference closed fields:

\begin{corollary}
There exist computable difference fields which cannot be effectively embedded into any computable difference closed field.  Moreover, there is a counterexample in every characteristic.
\end{corollary}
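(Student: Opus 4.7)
The plan is to deduce the corollary directly from Theorem~\ref{thm:embedding-into-acfa}: to produce a counterexample in characteristic $p$ (including $p=0$), it suffices to produce a computable field $\mc{F}$ of characteristic $p$ together with a computable automorphism $\sigma$ such that no Rabin embedding $\imath \colon \mc{F} \to \overline{\mc{F}}$ admits a computable automorphism $\tau$ of $\overline{\mc{F}}$ $\imath$-extending $\sigma$. Equivalently, we want $\mc{F}$ to fail the computable extendability of automorphisms property.

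In characteristic zero, Proposition~\ref{cor:forking-zero} already supplies such an $(\mc{F},\sigma)$, namely $\mc{F} = \mathbb{Q}(\sqrt{p_n} : n \in C)$ for any noncomputable c.e.\ set $C$, with $\sigma$ the automorphism built in that proof. Thus the only real work is in characteristic $p>0$. Here the plan is to exhibit a computable normal extension $\mc{F}/\mathbb{F}_p$ with no splitting algorithm, and then invoke Theorem~\ref{cor:forking-p} to conclude that $\mc{F}$ lacks the computable extendability of automorphisms property, whence by Theorem~\ref{thm:embedding-into-acfa} some computable $\sigma$ witnesses the failure.

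For the construction in characteristic $p>0$, I would fix an enumeration $q_0,q_1,\ldots$ of distinct primes different from $p$ and a c.e.\ noncomputable set $C \subseteq \omega$, and set
\[ \mc{F} \;=\; \mathbb{F}_p\bigl(\mathbb{F}_{p^{q_n}} : n \in C\bigr) \;\subseteq\; \overline{\mathbb{F}_p}. \]
Enumerating $\mc{F}$ using an enumeration of $C$ gives a computable presentation, and $\mc{F}$ is automatically a normal (indeed Galois) extension of $\mathbb{F}_p$. The key independence claim I need is that $n \notin C$ implies $\mathbb{F}_{p^{q_n}} \not\subseteq \mc{F}$; this follows by a degree argument, since any finitely generated subfield $\mathbb{F}_p(\mathbb{F}_{p^{q_{n_1}}},\ldots,\mathbb{F}_{p^{q_{n_\ell}}})$ has degree dividing $q_{n_1}\cdots q_{n_\ell}$ over $\mathbb{F}_p$, and the distinct prime $q_n$ does not divide this product. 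Consequently a splitting algorithm for $\mc{F}$ would decide $C$ (e.g., by asking whether the minimal polynomial of a generator of $\mathbb{F}_{p^{q_n}}$ factors over $\mc{F}$), contradicting the noncomputability of $C$. Thus $\mc{F}$ has no splitting algorithm, and Theorem~\ref{cor:forking-p} supplies the desired computable $\sigma$.

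The only step requiring care is the independence claim for the characteristic $p$ construction, but this is an elementary degree/divisibility argument in the well-understood lattice of finite subfields of $\overline{\mathbb{F}_p}$, so no real obstacle arises. Once both examples are in hand, a single appeal to Theorem~\ref{thm:embedding-into-acfa} converts each of them into the required computable difference field that admits no computable embedding into a computable difference closed field, completing the proof in every characteristic.
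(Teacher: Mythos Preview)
Your proposal is correct and follows the same approach as the paper: characteristic zero via Proposition~\ref{cor:forking-zero}, characteristic $p>0$ via Theorem~\ref{cor:forking-p} applied to a normal extension of $\mathbb{F}_p$ without a splitting algorithm, and then Theorem~\ref{thm:embedding-into-acfa} in both cases. The only difference is that you supply an explicit such extension in positive characteristic (the compositum of $\mathbb{F}_{p^{q_n}}$ for $n\in C$), whereas the paper simply asserts that one may use ``any normal extension of $\mathbb{F}_p$ with no splitting algorithm''; your construction and the supporting degree argument are correct.
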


\begin{proof} In characteristic zero, apply the previous corollary to the field from Proposition \ref{cor:forking-zero}, and in characteristic $p > 0$, by Corollary \ref{cor:forking-p}, we can use any normal extension of $\mathbb{F}_p$ with no splitting algorithm. \end{proof}

\begin{corollary} The analogue of Rabin's Theorem holds for difference fields with underlying field $\mc{F}$ if and only if $\mc{F}$ has the computable extension of automorphisms property. 
\end{corollary}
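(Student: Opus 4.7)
The plan is to observe that this corollary is essentially an immediate consequence of Theorem~\ref{thm:embedding-into-acfa} combined with Definition~\ref{def:3}, and amounts to unwinding the relevant definitions. First, I would make explicit what ``the analogue of Rabin's Theorem holds for difference fields with underlying field $\mc{F}$'' means in this setting: namely, that for every computable automorphism $\sigma$ of $\mc{F}$, the difference field $(\mc{F},\sigma)$ admits a computable embedding into some computable difference closed field.

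Next, for each fixed computable $\sigma$, I would invoke Theorem~\ref{thm:embedding-into-acfa}, which gives the equivalence between (1) such a computable embedding of $(\mc{F},\sigma)$ into a computable difference closed field, and (2) the existence of a computable Rabin embedding $\imath \colon \mc{F} \to \overline{\mc{F}}$ together with a computable automorphism $\tau$ of $\overline{\mc{F}}$ which $\imath$-extends $\sigma$. Universally quantifying over all computable automorphisms $\sigma$ of $\mc{F}$ then shows that the analogue of Rabin's Theorem for underlying field $\mc{F}$ is equivalent to: for every computable automorphism $\sigma$ of $\mc{F}$, there exist a computable Rabin embedding $\imath$ and a computable $\imath$-extension $\tau$ of $\sigma$ to $\overline{\mc{F}}$. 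This is precisely the computable extendability of automorphisms property as formulated in Definition~\ref{def:3}.

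Since the argument is a direct bi-implication obtained by applying Theorem~\ref{thm:embedding-into-acfa} pointwise in $\sigma$, there is no substantive obstacle. The only subtlety worth flagging is quantifier matching: both formulations permit the Rabin embedding $\imath$ (equivalently, the target difference closed field) to depend on the given automorphism $\sigma$, so the quantifier structure on the two sides agrees, and no uniformity across $\sigma$ is required or claimed.
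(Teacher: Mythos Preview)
Your proposal is correct and matches the paper's approach: the paper states this corollary without proof, treating it as an immediate consequence of Theorem~\ref{thm:embedding-into-acfa} applied pointwise in $\sigma$ together with Definition~\ref{def:3}, exactly as you describe. Your remark about the quantifier structure (that $\imath$ is allowed to depend on $\sigma$ on both sides) is the only point worth making explicit, and you have done so.
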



A set is \textit{low} if its Turing jump is as low as possible, i.e., Turing equivalent to $\varnothing'$. We note that every computable difference field does embed into a \textit{low} difference closed field:

\begin{fact}[essentially Friedman, Simpson, Smith \cite{FriedmanSimpsonSmith83}]
Every computable difference field embeds (by a map of low degree) into a low difference closed field.
\end{fact}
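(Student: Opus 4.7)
The plan is to apply the Low Basis Theorem of Jockusch and Soare to a naturally occurring $\Pi^0_1$ class of completions. In the language $\mathcal{L}_{\mc{F}}$ of difference fields expanded by a constant symbol $\underline{a}$ for each $a \in \mc{F}$, I would form the theory $T$ consisting of $ACFA$ together with the atomic diagram of $(\mc{F},\sigma)$. Since $\mc{F}$ and $\sigma$ are computable, the atomic diagram is a computable set of sentences of $\mathcal{L}_{\mc{F}}$; since $ACFA$ is recursively axiomatized, $T$ is a computable theory. Moreover $T$ is consistent, because $ACFA$ is the model companion of the theory of difference fields, so $(\mc{F},\sigma)$ embeds into some model of $ACFA$.

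The set of complete consistent extensions of $T$, viewed as infinite paths through the finite-branching tree of finite consistent extensions of $T$, forms a nonempty $\Pi^0_1$ class in Cantor space. The Low Basis Theorem accordingly yields a low complete extension $T^{*}$. From $T^{*}$ I would run a standard effective Henkin construction, computable in $T^{*}$: introduce new constant symbols $c_0, c_1, \ldots$ and alternate between deciding the next $\mathcal{L}_{\mc{F} \cup C}$-sentence using $T^{*}$ and, whenever a sentence of the form $(\exists x)\varphi(x)$ has been committed to, adjoining a Henkin witness $\varphi(c_n)$ for a fresh constant $c_n$. Taking the quotient of $\{c_0, c_1, \ldots\}$ by the resulting equality relation yields a structure $(\mc{K},\rho)$ whose atomic diagram is Turing-equivalent to the completed Henkin theory, which in turn is Turing-equivalent to $T^{*}$ and therefore low.

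Since $T^{*} \supseteq ACFA$, the structure $(\mc{K},\rho)$ is a difference closed field, and since $T^{*}$ contains the atomic diagram of $(\mc{F},\sigma)$, the assignment $a \mapsto [\underline{a}]$ is a computable (hence low-degree) embedding of $(\mc{F},\sigma)$ into $(\mc{K},\rho)$. I do not expect a substantive obstacle beyond the careful bookkeeping of the Henkin construction; the essential point is simply that the construction stays at the degree of $T^{*}$, which follows because each stage only queries $T^{*}$ finitely often. This is precisely the template used in \cite{FriedmanSimpsonSmith83} for algebraically closed fields, adapted here to $ACFA$ by using the fact that $ACFA$ (like the theory of algebraically closed fields) is model complete and recursively axiomatizable.
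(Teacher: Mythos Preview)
Your argument is correct, and it is close in spirit to the paper's, but the route is organized differently. The paper first takes a computable Rabin embedding $\imath\colon\mc{F}\to\overline{\mc{F}}$ and invokes Friedman--Simpson--Smith to produce a \emph{low automorphism} $\tau$ of $\overline{\mc{F}}$ that $\imath$-extends $\sigma$; since the completions of $ACFA$ are determined by the action of the automorphism on $\overline{\mathbb{F}_p}$, the theory $ACFA$ together with the diagram of $(\overline{\mc{F}},\tau)$ is already complete and low, and a Henkin construction relative to it yields the low model. You instead apply the Low Basis Theorem directly to the $\Pi^0_1$ class of completions of $ACFA$ plus the atomic diagram of $(\mc{F},\sigma)$, bypassing the algebraic closure entirely. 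Both uses of lowness ultimately come from the Low Basis Theorem (the FSS result the paper cites is itself proved that way), so the two arguments are really two packagings of the same mechanism; yours is more generic (it works for any recursively axiomatized model companion), while the paper's exploits the explicit description of the completions of $ACFA$ and ties the argument back to the paper's theme of extending automorphisms to $\overline{\mc{F}}$.

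One small slip: the embedding $a\mapsto[\underline{a}]$ is not literally computable in your setup, since identifying the equivalence class of $\underline{a}$ in the Henkin model requires consulting $T^{*}$. It is $T^{*}$-computable and hence low, which is all the statement demands.
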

\begin{proof}
Let $(\mc{F},\sigma)$ be a computable difference field. Let $\imath \colon \mc{F} \to \overline{\mc{F}}$ be a computable embedding of $\mc{F}$ into its algebraic closure. Then there is a low automorphism $\tau$ of $\overline{\mc{F}}$ extending $\sigma$ (see \cite{FriedmanSimpsonSmith83}). The theory $ACFA$ together with the action of $\tau$ on $\overline{\mc{F}}$ is a complete low theory, and an effective Henkin construction produces a low model as in Theorem \ref{thm:embedding-into-acfa}.
\end{proof}

In Theorem \ref{thm:CEAP}, we showed that for a field whose Galois group has the non-covering property, having a splitting algorithm is equivalent to the computable extendability of automorphisms property. We do not know in general whether these are equivalent. We leave open:
\begin{ques}
For a normal extension $\mc{F}$ of $\mathbb{Q}$, is the computable extendability of automorphisms property equivalent to having a splitting algorithm?
\end{ques}


\bibliography{References}
\bibliographystyle{alpha}

\end{document}